\newtheorem{thm}{Theorem}
\newtheorem{lem}[thm]{Lemma}
\newtheorem{prop}[thm]{Proposition}
\newtheorem{cor}[thm]{Corollary}
\theoremstyle{remark}
\newtheorem{rmk}[thm]{Remark}
\newtheorem{example}[thm]{Example}
\theoremstyle{definition}
\newtheorem{defi}[thm]{Definition}
\numberwithin{thm}{section} 
\numberwithin{equation}{section}
\newcommand{\Rmnum}[1]{\expandafter\@slowromancap\romannumeral #1@}
\def\A{{\mathcal A}}
\def\R{{\mathbb R}}
\def\X{{\mathbf X}}
\def\O{{\mathcal O}}
\newcommand{\pO}{\partial\Omega}
\newcommand{\Oba}{\overline{\Omega}}
\newcommand{\vep}{\varepsilon}
\newcommand{\ol}{\overline}
\newcommand{\ul}{\underline}
\newcommand{\bpm}{\begin{pmatrix}}
\newcommand{\epm}{\end{pmatrix}}
\newcommand{\beq}{\begin{equation}}
\newcommand{\eeq}{\end{equation}}
\def\lipl{{\rm Lip}_{loc}}
\title[Eikonal equation in metric spaces]{\protect{Equivalence of solutions of eikonal equation\\ in metric spaces}}
\author[Q. Liu]{Qing Liu}
\address{Qing Liu, Department of Applied Mathematics, Fukuoka University, Fukuoka 814-0180, Japan, {\tt qingliu@fukuoka-u.ac.jp}}
\author[N. Shanmugalingam]{Nageswari Shanmugalingam}
\address{Nageswari Shanmugalingam, Department of Mathematical Sciences, University of Cincinnati, 
P.O.Box 210025, Cincinnati, OH 45221-0025, USA, {\tt shanmun@uc.edu}}
\author[X. Zhou]{Xiaodan Zhou}
\address{Xiaodan Zhou, Analysis on Metric Spaces Unit, Okinawa Institute of Science and Technology G
raduate University, 
Okinawa 904-0495, Japan\\ 
{\tt xiaodan.zhou@oist.jp}}
\date{\today}
\begin{document}

\begin{abstract}
In this paper we prove the equivalence  
between some known notions of 
solutions to the eikonal equation and more general 
analogs of the Hamilton-Jacobi equations in complete and rectifiably connected metric spaces. 
The notions considered are that of curve-based viscosity solutions,
slope-based viscosity solutions, and Monge solutions. By using the induced intrinsic (path)
metric, we reduce the metric space to a length space and show the equivalence of these solutions to the associated 
Dirichlet boundary 
problem. Without utilizing the boundary data, we also localize our argument and directly prove the equivalence for the 
definitions of solutions. Regularity of solutions related to the Euclidean semi-concavity is discussed as well.
\end{abstract}

\subjclass[2010]{35R15, 49L25, 35F30, 35D40}
\keywords{eikonal equation, viscosity solutions, metric spaces}

\maketitle

\section{Introduction}

\subsection{Background and motivation}
In this paper, we are concerned with first order Hamilton-Jacobi equations in metric spaces. 
The Hamilton-Jacobi equations in the Euclidean spaces are widely applied in various fields such as 
optimal control,  geometric optics, computer vision, image processing.  It is well known that the notion 
of viscosity solutions provides a nice framework for the well-posedness of first order fully nonlinear 
equations; we refer to \cite{CIL, BC} for comprehensive introduction.  

In seeking to further develop various fields such as optimal transport \cite{AGS13, Vbook}, 
mean field games \cite{CaNotes}, topological networks \cite{SchC, IMZ,  ACCT,  IMo1, IMo2} etc., the 
Hamilton-Jacobi equations in a general metric space $(\X, d)$ have recently attracted great attention,
see for example~\cite{GaS, GHN}. 
Typical forms of the equations include
\beq\label{stationary eq}
H(x, u, |\nabla u|)=0 \quad \text{in $\Omega$,}
\end{equation}
and its time-dependent version 
\beq\label{evolution eq}
{\partial_t}u+H(x, t, u, |\nabla u|)=0 \quad \text{in $(0, \infty)\times \X$}
\end{equation}
with necessary boundary or initial value conditions. Here $\Omega\subsetneq \X$ is an open set and 
$H: \Omega\times \R\times \R\to \R$ is a given continuous function called the Hamiltonian of the Hamilton-Jacobi equation. 
While $\partial_t $ denotes the time differentiation, $|\nabla u|$ stands for a generalized notion of the gradient 
norm of $u$ in metric spaces.
 
In this paper  we also pay particular attention to the so-called eikonal equation
\beq\label{eikonal eq}
|\nabla u|(x)=f(x) \quad \text{in $\Omega$.}
\end{equation}
Here $f: \Omega\to [0, \infty)$ is a given continuous function satisfying
\[
\inf_{\Omega} f>0. 
\]
The eikonal equation in the Euclidean space has important applications in various fields such as geometric optics, 
electromagnetic theory and image processing \cite{KKbook, MSbook}.

Several new notions of viscosity solutions have recently been proposed in the general metric setting. 
We refer the reader to Section \ref{sec:review} for a review with precise definitions and basic properties of solutions,
and to the relevant papers we mention below.

Using rectifiable curves in the space $\X$, Giga, Hamamuki and Nakayasu~\cite {GHN} discussed a notion of metric 
viscosity solutions to \eqref{eikonal eq} and established well-posedness under the Dirichlet condition
\beq\label{bdry cond}
u=\zeta \quad \text{on $\pO$,}
\eeq 
where $\zeta$ is a given bounded continuous function on $\pO$ satisfying an appropriate regularity assumption to be 
discussed later. In the sequel, this type of solutions will be called curve-based solutions (or c-solutions for short). 
The definition of c-solutions (Definition \ref{defi c}) essentially relies on optimal control interpretations along 
rectifiable curves and requires very little structure of the space. The same approach is used in \cite{Na1} to 
study the evolution problem \eqref{evolution eq} with the Hamiltonian $H(x, )$ convex in $v$. Moreover, 
unique viscosity solutions of the eikonal equation in the sense of \cite{GHN} are also constructed on 
fractals like the Sierpinski gasket \cite{CCM}. 
 
On the other hand, when $(\X, d)$ is a complete geodesic space, by interpreting $|\nabla u|$ as the local slope of a locally Lipschitz function $u$, Ambrosio and Feng \cite{AF} provide a different viscosity 
approach to \eqref{evolution eq} for a class of convex Hamiltonians. This was extended to the class of
potentially nonconvex
Hamiltonians $H$ by
Gangbo and {\'S}wi{\polhk{e}}ch  \cite{GaS2, GaS}, who  proposed a generalized notion of viscosity solutions 
via appropriate test classes and proved uniqueness and existence of the solutions to more general 
Hamilton-Jacobi equations in length spaces. Stability and convexity of such solutions 
are studied respectively in \cite{NN} and in \cite{LNa}. Since this definition of solutions is based on 
the local slope, we shall call them slope-based solutions (or s-solutions for short) below. See the 
precise definition in Definition \ref{defi s}.

Since either approach above provides a generalized viscosity solution theory for first order 
nonlinear equations, especially the uniqueness and existence results, it is natural to expect that 
they actually agree with each other in the wider setting of metric spaces. This motivates 
us to explore the relations between both types of viscosity solutions and to understand  further 
connections to other possible approaches.

\subsection{Equivalence of solutions to the Dirichlet problem}
In the first part of this work (Section \ref{sec:cs}), we compare c- and s-solutions of the 
eikonal equation and show their equivalence when continuous solutions to the Dirichlet 
problem exist. To this end, we assume that $(\X, d)$ is complete and rectifably connected; 
namely, for any $x, y\in \X$, there exists a rectifiable curve in $\X$ joining $x$ and $y$. 

Our key idea is to use the induced intrinsic metric of the space $\X$, which is given by
\beq\label{int metric}
\tilde{d}(x, y)=\inf\{\ell(\xi): \xi\text{ is a rectifiable curve connecting }x\text{ and }y\}
\eeq
for $x, y\in \X$, where $\ell(\xi)$ denotes the length of the curve $\xi$ with respect to the original 
metric $d$. It is then easily seen that $(\X, \tilde{d})$ is a length space. Since this change of 
metric preserves the property of c-solutions, we can directly compare 
the notion of c-solutions in $(\X,d)$ with s-solutions in $(\X,\tilde{d})$. In order to preserve the 
completeness of the metric space,  we assume throughout the paper that
\beq\label{eq ast}
\tilde{d}\to 0 \ \text{as $d\to 0$}. 
\eeq

Before introducing our main results, we emphasize that in the sequel, by ``local'' we mean the 
property in question holds in sufficiently small open balls with respect to the intrinsic metric of the space.

Our first main result is as follows. 

\begin{thm}[Equivalence between c- and s-solutions]\label{thm equiv}
Let  $(\X, d)$ be a complete rectifiably connected metric space and $\tilde{d}$ be the intrinsic metric given by \eqref{int metric}. 
Assume that $(\X,d)$ also satisfies \eqref{eq ast}.
Suppose that $\Omega\subsetneq \X$ is an open set bounded with respect to the metric $\tilde{d}$. 
Assume that $f\in C(\Omega)$ satisfies $\inf_{\Omega} f>0$. 
If $u$ is a c-solution of \eqref{eikonal eq} with respect to $d$, 
then $u$ is a locally Lipschitz s-solution 
of \eqref{eikonal eq} with respect to $\tilde{d}$. 
In addition, if $f$ is uniformly continuous in $\Omega$, $\zeta$ is 
uniformly continuous on $\partial\Omega$, and
there exists a modulus of continuity $\sigma$ such that 
\beq\label{new bdry regularity}
|\zeta(y)-u(x)|\leq \sigma(\tilde{d}(x, y))\quad \text{for all $x\in \Omega$ and $y\in \pO$,}
\eeq
then $u$ is the unique 
c-solution and s-solution of \eqref{eikonal eq} satisfying~\eqref{new bdry regularity}. 
\end{thm}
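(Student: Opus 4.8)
The plan is to break the statement into two parts, mirroring its phrasing. First, the implication ``c-solution with respect to $d$ $\Rightarrow$ locally Lipschitz s-solution with respect to $\tilde d$.'' The key observation is that the change of metric $d\mapsto\tilde d$ does not alter the class of rectifiable curves nor their lengths, so that the optimal-control/curve-based formulation of being a c-solution is literally unchanged when passing from $(\X,d)$ to $(\X,\tilde d)$; this is the ``preservation'' remark already made in the introduction, and I would state it as a short lemma. Thus $u$ is a c-solution in the length space $(\X,\tilde d)$. In a length space the intrinsic metric coincides with $\tilde d$ itself, and one expects a comparison-with-cones type argument: from the c-solution property and $\inf_\Omega f>0$ together with $f\in C(\Omega)$, local Lipschitz continuity of $u$ in $\tilde d$-balls follows from the standard value-function bound $|u(x)-u(y)|\le (\sup f)\,\tilde d(x,y)$ for $x,y$ in a small ball (the upper bound coming from testing with a near-geodesic, the matching structure from the definition of c-solution). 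Once $u$ is locally Lipschitz with respect to $\tilde d$, its local slope $|\nabla u|$ in the sense of Definition~\ref{defi s} is well defined, and I would show the c-solution property forces $|\nabla u| = f$ in the slope-based viscosity sense. This last step is where one must carefully translate between the curve-based sub/supersolution inequalities and the slope-based test-function inequalities; I expect to use that along rectifiable curves the metric derivative of $u\circ\xi$ is controlled by the local slope, and conversely that the local slope is attained along suitable curves in a length space, so the two notions of $|\nabla u|$ squeeze together. I would appeal to the earlier review (Section~\ref{sec:review}) for the precise forms of these characterizations.

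Second, the uniqueness assertion under the extra hypotheses: $f$ uniformly continuous, $\zeta$ uniformly continuous on $\pO$, and the compatibility condition~\eqref{new bdry regularity}. Here the strategy is to invoke the known well-posedness results — that of \cite{GHN} for c-solutions and that of \cite{GaS,GaS2} (or \cite{AF}) for s-solutions in length spaces — applied in the space $(\X,\tilde d)$, which by \eqref{eq ast} is complete (this is precisely why \eqref{eq ast} is assumed) and, being a length space with $\Omega$ bounded in $\tilde d$, falls in the scope of those theorems. The first part of the theorem shows any c-solution is an s-solution, and one shows symmetrically (or by the uniqueness of each theory separately) that the boundary-value problem has at most one solution in each class; combined with the existence of at least one continuous solution satisfying~\eqref{new bdry regularity} (given, since $u$ itself is one), we conclude $u$ is the unique c-solution and the unique s-solution. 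A point needing care is the translation of the boundary regularity hypothesis~\eqref{new bdry regularity}, stated in terms of $\tilde d$, into the hypotheses required by the cited comparison principles, and checking that the modulus $\sigma$ interacts correctly with the uniform continuity of $f$ to drive the doubling-variables argument; but since we are allowed to cite those theorems, this amounts to verifying that our standing assumptions imply theirs.

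The main obstacle I anticipate is the \emph{equivalence of the two notions of $|\nabla u|$ along curves versus via the local slope}, i.e. the heart of the first implication: showing that the curve-based viscosity inequalities are equivalent to the slope-based ones for a locally Lipschitz $u$ on a length space. Upper gradient–type estimates give one direction easily ($u\circ\xi$ has metric derivative at most $|\nabla u|$ along any curve, so the curve-based value function dominates), but the reverse — extracting, from a slope-based test inequality, a rectifiable curve realizing the defect and vice versa — requires the length-space structure in an essential way (near-geodesics, reparametrization by arclength, and a careful limiting argument to pass from the slope of $u$ at a point to a genuine infinitesimal curve). I would isolate this as the technical lemma of Section~\ref{sec:cs} and handle the Dirichlet-problem conclusion as a comparatively routine corollary of it together with the cited well-posedness theorems.
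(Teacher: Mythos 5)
Your plan follows the paper's own route: pass to the intrinsic metric $\tilde d$ (which preserves the class of admissible curves, their lengths, and hence the c-solution property, and makes $(\X,\tilde d)$ a complete length space), obtain the local Lipschitz bound $|u(x)-u(y)|\le \tilde d(x,y)\sup f$ from the integral characterization of c-subsolutions applied along near-geodesics, derive the s-subsolution and s-supersolution test inequalities from the curve-based characterizations of \cite{GHN}, and get uniqueness from the comparison principle of \cite{GaS} once \eqref{new bdry regularity} is in force. One caution: you frame the key technical lemma as a two-way equivalence between the curve-based and slope-based viscosity inequalities, but the converse direction (s-supersolution $\Rightarrow$ c-supersolution) is neither needed for this theorem nor established in the paper (it is explicitly left open); only the implication c $\Rightarrow$ s is used, with the supersolution half resting on the near-optimal curves provided by \cite[Proposition 2.8]{GHN} rather than on any claim that the slope of $u$ is attained along curves of the length space, and uniqueness in \emph{both} classes then follows from the s-comparison principle alone, so the appeal to a separate comparison theorem for c-solutions is unnecessary.
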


In the second result of the theorem above,
we can replace the assumptions of uniform continuity of $u$ and $f$ by their continuity if  
$(\X, d)$ is additionally assumed to be proper, that is, any bounded closed 
subset of $\X$ is compact. 

As the metric $d$ was replaced with the metric $\tilde{d}$, the proof of the first statement in 
Theorem~\ref{thm equiv} is more or less analogous to the classical arguments to show the 
relation between a viscosity solution and its optimal control interpretation (cf. \cite{BC}). 

The c-solutions and $f$ considered in \cite{GHN} are in general only continuous along curves,  
 and therefore might not be continuous with respect to either the metric $d$ or the metric 
 $\tilde{d}$; see \cite[Example 4.9]{GHN}. 
 Here we assume the stronger requirement of continuity with respect to the metric $d$.
Then \eqref{new bdry regularity} in the second result of Theorem \ref{thm equiv} guarantees 
that the c-solution $u$ is uniformly continuous up to $\pO$. 
Recall that the comparison principle (and thus the 
uniqueness) for s-solutions \cite{GaS} needs such an assumption. More precisely, it was shown 
in \cite[Theorem 5.3]{GaS} that any s-subsolution $u$ and any s-supersolution $v$ 
satisfy $u\leq v$ in $\Oba$ whenever there exists a modulus $\sigma$ such that 
\beq\label{bdry verify0}
u(x)\leq \zeta(y)+\sigma(\tilde{d}(x, y)), \quad v(x)\geq \zeta(y)-\sigma(\tilde{d}(x, y))
\eeq
for all $x\in \Omega$ and $y\in \pO$. One thus needs to use \eqref{new bdry regularity} to 
validate the comparison principle and conclude the uniqueness and equivalence. 
 In 
Section~\ref{sec:bdry consistency}, for the sake of completeness, sufficient conditions for 
\eqref{new bdry regularity} are discussed.

\subsection{Local equivalence of solutions}

Our result in Theorem \ref{thm equiv} states that c- and s-solutions of \eqref{eikonal eq} coincide. 
However, we show the equivalence in presence of the Dirichlet boundary condition so as to use 
the comparison principle. One may wonder about a more direct proof of the equivalence  
without using the  boundary condition. 

The second part of this work is devoted to answering this question. Our method is based on  
localization of our arguments in the first part. To this end, we introduce a local version of the notion of c-solutions, 
which we call local c-solutions, by restricting the definition in a small metric ball centered at each 
point in $\Omega$; see Definition \ref{def local c}. We also include a third notion, called Monge 
solutions, in our discussion. We compare locally the notions of s-, local c-, and Monge s
olutions of \eqref{eikonal eq} in a complete length space. 

The Monge solution is known to be an alternative notion of solutions to Hamilton-Jacobi equations in 
Euclidean spaces \cite{NeSu, BrDa}. In a complete length space $(\X, d)$, our generalized 
definition of Monge solutions to \eqref{eikonal eq} is quite simple; it only requires a locally Lipschitz function $u$ to satisfy 
 \[
|\nabla^- u|(x)=f(x)\quad \text{for \emph{every} $x\in \Omega$,}
 \]
 where $|\nabla^- u|(x)$, given by 
 \[
 |\nabla^- u|(x)=\limsup_{y\to x} {\max\{u(x)-u(y), 0\}\over d(x, y)},
 \]
denotes the sub-slope of $u$ at $x$; see also the definition in \eqref{semi slope}. 
One advantage of this notion is that it does not involve any viscosity tests and the 
comparison principle can be easily established. We show that locally uniformly 
continuous c-, s- and Monge solutions of the eikonal equation are actually equivalent under a weaker 
positivity assumption on $f$. A more precise statement is given below. 

\begin{thm}[Local equivalence between solutions of eikonal equation]\label{thm equiv Monge}
Let $(\X,  d)$ be a complete length space and $\Omega\subset \X$ be an 
open set. 
Assume that $f$ is locally uniformly continuous and $f>0$ in $\Omega$.  
Let $u\in C(\Omega)$. Then the following statements are equivalent: 
\begin{enumerate}
\item[(a)] $u$ is a local c-solution of \eqref{eikonal eq};
\item[(b)] $u$ is a locally uniformly continuous s-solution of \eqref{eikonal eq};
\item[(c)] $u$ is a Monge solution of \eqref{eikonal eq}. 
\end{enumerate}
In addition, if any of (a)--(c) holds, then $u$ is locally Lipschitz with 
\beq\label{regular0}
|\nabla u|(x)=|\nabla^- u|(x)=f(x)\quad \text{for all  $x\in \Omega$.}
\eeq
\end{thm}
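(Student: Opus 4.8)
The plan is to prove the cycle of implications (a) $\Rightarrow$ (c) $\Rightarrow$ (b) $\Rightarrow$ (a), exploiting the fact that in a length space the intrinsic metric $\tilde d$ coincides with $d$, so the localized version of the argument behind Theorem \ref{thm equiv} applies directly on small balls. First I would record the elementary but essential inequalities relating the three ``gradient'' notions for a locally Lipschitz $u$: one always has $|\nabla^- u| \le |\nabla u|$ (the sub-slope is dominated by the full local slope), and along any rectifiable curve the sub-slope controls the decay of $u$, i.e. $u(\xi(a)) - u(\xi(b)) \le \int_a^b |\nabla^- u|(\xi(s))\,|\dot\xi|(s)\,ds$ when $u$ is absolutely continuous along $\xi$. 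These are the purely metric ingredients that let one pass between the curve-based and slope-based pictures.

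For (a) $\Rightarrow$ (c): given a local c-solution $u$, I would first show it is locally Lipschitz. The c-solution property gives, on a small ball $B = B(x_0, r)$, the optimal-control representation of $u$ as an infimum of $\zeta$-type values plus curve-length costs weighted by $f$; since $\inf_B f > 0$ and $\sup_B f < \infty$ (using local uniform continuity of $f$), standard optimal-control estimates yield that $u$ is Lipschitz on a slightly smaller ball with constant comparable to $\sup_B f$. Then the subsolution half of the c-definition forces $|\nabla^- u|(x) \le f(x)$ pointwise (the decay of $u$ along curves cannot exceed the $f$-weighted length), while the supersolution half — the existence of near-optimal curves emanating from $x$ along which $u$ decreases at rate essentially $f$ — forces $|\nabla^- u|(x) \ge f(x)$; here the continuity of $f$ is used to freeze $f$ near $x_0$ and pass to the limit. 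This gives $|\nabla^- u| = f$ in $\Omega$, i.e. $u$ is a Monge solution, and simultaneously delivers \eqref{regular0} once we also know $|\nabla u| \le f$, which follows because the Lipschitz bound localizes to $|\nabla u|(x) \le \limsup_{y\to x} |u(x)-u(y)|/d(x,y)$ and the c-subsolution property bounds increments in \emph{both} directions by the $f$-weighted distance.

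For (c) $\Rightarrow$ (b): given a Monge solution $u$, I would verify the s-sub/supersolution inequalities against the Gangbo–{\'S}wi{\polhk{e}}ch test classes on small balls. Since $|\nabla u| = |\nabla^- u| = f$ (the first equality again from $|\nabla^- u| \le |\nabla u|$ plus an upper bound on $|\nabla u|$ obtained by integrating $|\nabla^- u| = f$ along short curves in both directions in the length space), $u$ solves the eikonal equation in the slope sense pointwise; translating this into the viscosity formulation with test functions is then a routine check because $f$ is continuous. For (b) $\Rightarrow$ (a): given a locally uniformly continuous s-solution, I would restrict to a small ball $B(x_0,r)$ on which $f$ is uniformly continuous and bounded away from $0$ and $\infty$, solve the Dirichlet problem on $B$ with boundary data $u|_{\partial B}$ to obtain the unique c-solution $w$ on $B$ (here \eqref{new bdry regularity}-type regularity is automatic since $u$ is Lipschitz on $B$, as will have been shown), apply Theorem \ref{thm equiv} (valid since on a length space $\tilde d = d$) to conclude $w$ is also the unique s-solution on $B$ with that boundary data, and then invoke the comparison principle for s-solutions to identify $u = w$ on $B$; since $x_0$ is arbitrary, $u$ is a local c-solution.

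The main obstacle I expect is the direction that produces the lower bound $|\nabla^- u|(x) \ge f(x)$ from the c-supersolution property without a boundary condition: one must extract, purely locally, a curve along which $u$ decreases at the prescribed rate, controlling the error from letting $f$ vary. The remedy is to work on balls small enough that $\osc_B f$ is tiny (possible by local uniform continuity), run the standard dynamic-programming argument of \cite{GHN} on that ball, and pass to the limit as the radius shrinks. A secondary technical point is making the use of Theorem \ref{thm equiv} legitimate at the local scale — one must check that a small $\tilde d$-ball in a length space is genuinely an admissible $\Omega$ (open, $\tilde d$-bounded, and with the induced completeness \eqref{eq ast} trivially holding since $\tilde d = d$), which is immediate but should be stated.
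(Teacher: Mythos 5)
Your steps (a)$\Rightarrow$(c) and (c)$\Rightarrow$(b) are essentially sound (running the localized dynamic-programming inequality of \cite{GHN} directly to get $|\nabla^- u|\ge f$, and integrating $|\nabla^- u|\le f$ along near-geodesics to get $|\nabla u|\le f$, are both legitimate and close to what the paper does), but the cycle breaks at (b)$\Rightarrow$(a), and the break is at the heart of the theorem. There you write that the needed boundary regularity ``is automatic since $u$ is Lipschitz on $B$, as will have been shown''---but nothing in your plan shows it: at the stage (b)$\Rightarrow$(a) you only know $u$ is a locally uniformly continuous s-solution, and the Lipschitz bound for such a function (equivalently $|\nabla u|\le f$, i.e.\ the Monge subsolution property) is exactly the hardest ingredient of the equivalence. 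The paper proves it in Proposition \ref{prop eikonal sub2}(ii) by building the cone-type barrier $v_r(x)=u(x_0)+g_r(d(x_0,x))$, checking it is a Monge (hence, by Proposition \ref{prop eikonal super}(i), an s-) supersolution of the equation with right-hand side $f+\delta$ on a punctured ball, verifying the boundary moduli \eqref{bdry verify0} (this is precisely where the local uniform continuity of $u$ and $f$ is used), and invoking the comparison principle of \cite{GaS}. Your proposal contains no substitute for this step, and without it the (b)$\Rightarrow$(a) argument is circular: the existence of the local Dirichlet c-solution $w$ with correct boundary behavior requires the compatibility condition \eqref{bdry regularity2} for $\zeta=u|_{\partial B}$, which is again a subsolution-type estimate on $u$ that you have not established.

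There is a second, independent obstruction in the same step. Even granting that $u$ is Lipschitz on $B$, identifying $u$ with the value function $w$ by two-sided comparison requires a lower boundary estimate $w(x)\ge \zeta(y)-\sigma(d(x,y))$; in the paper this comes either from \eqref{bdry regularity}, i.e.\ $\zeta$ Lipschitz with constant at most $\inf f$ (generally false for $\zeta=u|_{\partial B}$, whose constant is of order $\sup f$), or from Proposition \ref{prop bdry regularity2}, which needs quasiconvexity of the closed ball---a property that can fail in a general length space and that you only flag as ``immediate''. The paper avoids both issues in Proposition \ref{prop eikonal solution}: it uses only the one-sided estimate of Proposition \ref{prop bdry regularity}(1) together with the comparison principle for Monge solutions (Theorem \ref{thm comparison monge}), which requires only \eqref{bdry verify monge}, to get $U\le u$, and then exploits the already-established c-subsolution property of $u$ to upgrade $U\le u$ to the dynamic-programming inequality defining a local c-supersolution. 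So the route into (a) should go through (c) with the one-sided Monge comparison, and the route out of (b) must include the barrier-plus-comparison argument of Proposition \ref{prop eikonal sub2}(ii); as written, your plan omits both.
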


We actually prove more: 
\begin{itemize}
\item The notions of all locally uniformly continuous subsolutions are equivalent and 
locally Lipschitz (Proposition \ref{prop eikonal sub1}, Proposition \ref{prop eikonal sub2}); 
\item The notions of locally Lipschitz s-supersolutions and Monge supersolutions are 
equivalent (Proposition \ref{prop eikonal super}(i)); 
\item Any locally Lipschitz local c-supersolution is a Monge supersolution (Proposition \ref{prop eikonal super}(ii)); 
\item Any Monge solution is a local c-solution (Proposition \ref{prop eikonal solution}). 
\end{itemize}
We however do not know whether an s- or Monge supersolution needs to be a local c-supersolution.

In proving Theorem \ref{thm equiv Monge}, it turns out that the local Lipschitz continuity of solutions 
is an important ingredient.  Note that the local Lipschitz continuity holds for Monge solutions by 
definition, and it can be easily deduced for c-subsolutions as well because the space $(\X,\tilde{d})$ is
a length space, as shown in Lemma~\ref{lem c-lip}. 
In contrast, the Lipschitz regularity of s-solutions of \eqref{eikonal eq} is  less straightforward. 
Our proof requires the assumption on the local uniform continuity of s-solutions and $f$ due to 
possible lack of compactness for general length spaces. As stated in Corollary~\ref{cor equiv Monge}, 
we can remove such an assumption if $(\X, d)$ is proper (and therefore 
the length space space $X$ is a geodesic space because of
the generalized Hopf-Rinow theorem in metric spaces \cite{Gr, BHK}, which can be viewed as an immediate
consequence of the Arzela-Ascoli theorem). The notions of continuity and 
uniform continuity in a compact set are clearly equivalent. 

In this work we will assume that $(\X,d)$ satisfies the hypotheses of Theorem~\ref{thm equiv}. 
It is worth stressing that in this section $(\X, d)$ is assumed to be a length space only for 
simplicity from the point of view of c-solutions. The s-solutions and Monge solutions are not
defined for the case that $\X$ is not a length space. For metric spaces that are not length
spaces, the slopes in the definition of s-solutions and Monge solutions require us to
replace the metric $d$ with $\tilde{d}$.

It is not difficult to extend our discussion on the equivalence to the general 
equation~\eqref{stationary eq} under a monotonicity assumption on $p\to H(x, r, p)$. 
When $p\to H(x, r, p)$ is increasing,  we can simply apply an implicit-function-type 
argument to locally reduce the problem to the eikonal equation.

Besides, as in \eqref{regular0} in Theorem \ref{thm equiv Monge}, in this 
general case for any solution $u$ we can obtain the continuity of $|\nabla u|$ as well as the following property:
\beq\label{regular}
|\nabla u|(x)=|\nabla^- u|(x)\quad \text{for all  $x\in \Omega$;}
\eeq
in other words, the solution itself actually lie in the test class for s-subsolutions proposed 
in \cite{GaS2, GaS}. This type of properties also appears in the study of time-dependent 
Hamilton-Jacobi equations on metric spaces (cf. \cite{LoVi}). Our analysis reveals 
that \eqref{regular} resembles the semi-concavity in the Euclidean space. In fact, in the 
Euclidean space, \eqref{regular} implies the existence of a $C^1$ test function everywhere 
from above, which is a typical property of semi-concave functions. We expect more 
applications of the regularity property \eqref{regular}, since in general metric spaces 
defining convex functions is not trivial at all. It would be interesting to see further properties 
on such regular solutions in relation to the structure of PDEs and the geometric property.

The rest of the paper is organized as follows. In Section \ref{sec:review}, 
we review the definitions and properties of c- and s-solutions of Hamilton-Jacobi 
equations in metric spaces. Section \ref{sec:cs} is devoted to the proof of Theorem~\ref{thm equiv}. 
In Section~\ref{sec:monge} we propose the notion of Monge solutions and  prove 
Theorem~\ref{thm equiv Monge}.

\subsection*{Acknowledgements}

The work of the first author was supported by JSPS Grant-in-Aid for Scientific 
Research (No. 19K03574). 
The work of the second author is partially supported by
the grant DMS-\#1800161 of the National Science Foundation (U.S.A.)

\section{Metric viscosity solutions of Hamilton-Jacobi equations}\label{sec:review}

In this section we review the two notions of viscosity solutions to Hamilton-Jacobi equations in metric spaces, mentioned
in the first section. We focus on the stationary equation \eqref{stationary eq} and particularly the eikonal equation \eqref{eikonal eq}.

In the setting of general metric spaces, one needs to have an analog of $|\nabla u|$. To do so,
let us recall the definitions of  viscosity solutions in \cite{GHN} and \cite{GaS}.

\subsection{Curve-based solutions}

Given an interval $I=[a,b]\subset\R$ and a continuous function (curve) $\xi:I\to\Omega$, we define the length 
of $\gamma$ by
\[
\ell(\xi):=\sup_{a=t_0<t_1<\cdots<t_k=b} \sum_{j=0}^{k-1}d(\xi(t_j),\xi(t_{j+1})).
\]
Note that if $\ell(\xi)<\infty$, then the real-valued function $s_\xi:[a,b]\to[0,\ell(\gamma)]$ defined by
\[
s_\xi(t):=\ell(\xi\vert_{[a,t]})
\]
is a monotone increasing function, and hence it is differentiable at almost every $t\in[a,b]$. We denote
$s_\xi'$ by $|\xi^\prime|$, which can be equivalently defined by 
\[
|\xi^\prime|(t)=\lim_{\tau\to 0} {d(\xi(t+\tau), \xi(t))\over |\tau|}
\]
for almost every $t\in (a, b)$. Note that if $\xi$ is an absolutely continuous curve
(and so $s_\xi$ is absolutely continuous real-valued function), then
\[
s_\xi(t)=\int_a^t|\xi^\prime|(\tau)\, d\tau
\]
for all $t\in [a,b]$.

For any interval $I\subset \R$, we say an absolutely continuous curve $\xi: I\to \X$ is {admissible} if 
\[
|\xi'|\leq 1\quad  \text{a.e. in $I$.}
\]
Note that these are $1$-Lipschitz curves.
Let $\A(I, \X)$ denote the set of all admissible curves in $\X$
defined on $I$; without loss of generality,
we only consider intervals $I$ for which $0\in I$. For any $x\in \X$, 
we write $\xi\in \A_x(I, \X)$ if  
$\xi(0)=x$. We also need to define the exit time and entrance time of a curve $\xi$:
\begin{equation}\label{exit/entrance}
\begin{aligned}
T^+_\Omega[\xi]&:=\inf\{t\in I: t\geq 0, \xi(t)\notin \Omega\};\\
T^-_\Omega[\xi]&:=\sup\{t\in I: t\leq 0, \xi(t)\notin \Omega\}.
\end{aligned}
\end{equation}
Since any absolutely continuous curve is rectifiable and one can always reparametrize a  
rectifiable curve by its arc length (cf. \cite[Theorem 3.2]{Haj1}), hereafter we do not distinguish 
the difference between an absolutely continuous curve and a rectifiable (or Lipschitz) curve. 
\begin{defi}[Definition 2.1 in \cite{GHN}]\label{defi c}
An upper semicontinuous (USC) function $u$ in $\Omega$ is called a {curve-based viscosity subsolution} or 
{c-subsolution} of \eqref{eikonal eq} if for any $x\in\Omega$ and $\xi\in \A_x(\R, \Omega)$, we have 
\beq\label{eq c-sub}
\left|\phi'(0)\right|\leq f(x)
\eeq
whenever $\phi\in C^1(\R)$  
such that $t\mapsto u(\xi(t))-\phi(t)$, $t\in\xi^{-1}(\Omega)$, 
 attains a local maximum at $t=0$.

A lower semicontinuous (LSC) function $u$ in $\Omega$ is called a 
{curve-based viscosity supersolution} or  
{c-supersolution} of \eqref{eikonal eq} if 
for any $\vep>0$ and $x\in \Omega$, there exists $\xi\in \A_x(\R, \X)$ and $w\in LSC(T^-, T^+)$ with 
$-\infty<T^{\pm}=T^{\pm}_\Omega[\xi]<\infty$ such that 
\begin{equation}\label{wapprox}
w(0)=u(x), \quad w\geq u\circ\xi-\vep,
\end{equation}
and 
\begin{equation}\label{sgapprox}
\left|\phi'(t_0)\right|\geq f(\xi(t_0))-\vep
\end{equation}
whenever  $\phi\in C^1(\R)$ such that $w(t)-\phi(t)$ attains a minimum at $t=t_0\in (T^-, T^+)$. 
A function $u\in C(\Omega)$ is said to be a 
{curve-based viscosity solution} or 
{c-solution} if it is both a c-subsolution and a 
c-supersolution of \eqref{eikonal eq}. 
\end{defi}

In the definition of supersolutions, in general we cannot merely replace $w$ with
$u\circ\xi$. Suppose that $\X$ is not a geodesic space but a length space. When $f\equiv 1$, as 
we expect that the distance function $u=d(\cdot, x_0)$ is still a solution for any $x_0\in \X$, the 
supersolution property for $u\circ\xi$ without approximation would imply that $\xi$ is a geodesic, 
which is a contradiction.

The regularity of $u$ above can be  relaxed, since we only need its semicontinuity along each curve $\xi$.  
In fact, one can require a c-subsolution (resp., c-supersolution) to be merely arcwise upper  (resp., lower) 
semicontinuous; consult \cite{GHN} for details. However, in order to obtain our main results in 
this paper, we need to impose the conventional continuity of $u$ rather than the arcwise continuity.

The notions of c-subsolutions and c-supersolutions with respect to the metric $d$ and the intrinsic metric $\tilde{d}$ given in \eqref{int metric} are equivalent.  Note that $\xi$ is a curve with respect to $d$ if and only if it is a curve with
respect to $\tilde{d}$ because of our assumption~\eqref{eq ast}. It can also be seen that the speed $|\xi'|$ of the curve remains the same in both metrics; see Lemma~\ref{lem length}.  Hence, the class of admissible curves in Definition \ref{defi c} does not depend on the choice between $d$ and $\tilde{d}$. 



Although there seems to be no requirement on the metric space in the definition above,  
it is implicitly assumed in the definition of the c-supersolution that each point $x\in \Omega$ can be 
connected to the boundary $\partial\Omega$ by a curve of finite length. 

Uniqueness of c-solutions of \eqref{eikonal eq} with boundary data \eqref{bdry cond} is shown 
by proving a comparison principle \cite[Theorem 3.1]{GHN}. The existence of solutions in $C(\Oba)$, 
on the other hand, is based on an optimal control interpretation; in particular, it is 
shown in~\cite[Theorem~4.2 and Theorem~4.5]{GHN} that 
\beq\label{eq optimal control}
u(x)=\inf_{\xi\in C_x} \left\{ \int_{0}^{T_\Omega^+[\xi]} f(\xi(s))\, ds+\zeta\left(\xi(T_\Omega^+[\xi])\right)\right\}
\eeq
is a c-solution of \eqref{eikonal eq} and \eqref{bdry cond} provided that for each $x\in\Oba$ we have 
\[
C_x:=\left\{\xi\in \mathcal{A}_x([0, \infty), \X) \, :\, T_\Omega^+[\xi]\in (0, \infty)\right\}\neq \emptyset
\]
and $\zeta$ satisfies a boundary regularity, see \eqref{bdry regularity2} 
below. 

Note in the definition of c-supersolution, for each $(x,\vep)$ the conditions \eqref{wapprox} 
and \eqref{sgapprox} for $(\xi, w)$ are satisfied for all $t\in (T^-, T^+)$. We localize this definition as follows. 
Recall the notions of $T^{\pm}_\O[\xi]$ for open sets $\O\subset\X$ from~\eqref{exit/entrance}.

\begin{defi}[Local curve-based solutions]\label{def local c}
A function $u\in LSC(\Omega)$ is said to be a local
{c-supersolution} if for each $x\in\Omega$ there exists $r>0$ with $B_r^d(x)\subset\Omega$, and
for each $\vep>0$ we can find a curve $\xi_\vep\in \A_x(\R, \X)$ with $\xi_\vep(0)=x$ and a function 
$w\in LSC(t_r^-, t_r^+)$ with $t^-_r:=T^-_{B_r^d(x)}[\xi_\vep]$ and $t^+_r:=T^+_{B_r^d(x)}[\xi_\vep]$ such that
\[
w(0)=u(x), \quad w(t)\geq u\circ\xi_\vep(t)-\vep \ \text{ for all } t\in (t_r^-,t_r^+),
\]
and 
\[
\left|\phi'(t_0)\right|\geq f(\xi_\vep(t_0))-\vep 
\]
whenever  $\phi\in C^1(\R)$ such that $w(t)-\phi(t)$ attains a minimum at $t=t_0\in (t_r^-, t_r^+)$. 
A function $u\in C(\Omega)$ is said to be a local
{c-solution} if it is both a c-subsolution and a 
local c-supersolution of \eqref{eikonal eq}. 
\end{defi}

In the definition of local c-supersolutions given above,
the ball $B_r^d(x)$ is taken with respect to $d$. If $(\X, d)$ is a complete rectifiably connected 
metric space such that the intrinsic metric $\tilde{d}$ defined in \eqref{int metric} satisfies the consistency 
condition \eqref{eq ast}, 
then it is equivalent to use metric balls $B_r(x)$
with respect to $\tilde{d}$. This definition is studied mainly in Section~4,
where we assume $(\X,d)$ to be a length space. For length spaces balls with respect to $d$ and balls with respect to $\tilde{d}$
are the same, that is, $B_r^d(x)=B_r(x)$.

A c-supersolution (resp., c-solution) is clearly a local c-supersolution (resp., local c-solution), but it is not clear to us whether 
the reverse is also true in general. The notion of c-subsolutions is already a localized one, and we therefore do not have to define
``local c-subsolutions'' separately.

\subsection{Slope-based solutions}
We next discuss the definition proposed in \cite{GaS2}, which relies more on the property of 
geodesic or length metric. We denote by $\text{Lip}_{loc}(\Omega)$  
the set of locally Lipschitz 
continuous functions on an open subset $\Omega$ of a complete length space $(\X, d)$. 
For $u\in \text{Lip}_{loc}(\Omega)$ and for $x\in\Omega$, we define the local slope of $u$ to be
\beq\label{slope}
|\nabla u|(x):=\limsup_{y\to x}\frac{|u(y)-u(x)|}{d(x,y)}.
\eeq
Let 
\beq\label{test class}
\begin{aligned}
\overline{\mathcal{C}}(\Omega) &:= \{ u \in \text{Lip}_{loc}(\Omega) \, :\,  \text{$|\nabla^+ u| 
= |\nabla u|$ and $|\nabla u|$ is continuous in $\Omega$} \}, \\
\underline{\mathcal{C}}(\Omega) &:= \{ u \in \text{Lip}_{loc}(\Omega) \, :\,  \text{$|\nabla^- u| 
= |\nabla u|$ and $|\nabla u|$ is continuous in $\Omega$} \}, \\
\end{aligned}
\eeq
where, for each $x\in \X$, 
\beq\label{semi slope}
|\nabla^\pm u|(x) := \limsup_{y \to x} \frac{[u(y)-u(x)]_\pm}{d(x, y)}
\eeq
with $[a]_+:=\max\{a, 0\}$ and $[a]_-:=-\min\{a, 0\}$ for any $a\in \R$. In this work we call 
$|\nabla^+  u|$ and $|\nabla^- u|$ the (local) super- and sub-slopes of $u$ respectively; they 
are also named super- and sub-gradient norms in the literature (cf. \cite{LoVi}). 

Concerning the test classes $\ol{\mathcal{C}}(\Omega)$ and $\ul{\mathcal{C}}(\Omega)$, it is known from
\cite[Lemma 7.2]{GaS2} and \cite[Lemma 2.3]{GaS} that in a  length space $X$,  
$Ad(\cdot , x_0)^2$ belongs to $\ul{\mathcal{C}}(\Omega)$ for for any 
$x_0\in \X$ and $A>0$; moreover, $Ad(\cdot , x_0)^2$ belongs to 
$\ol{\mathcal{C}}(\Omega)$ for any 
$x_0\in \X$ and $A<0$.

Now we recall from \cite{GaS}  the definition of s-solutions of a general class of Hamilton-Jacobi equations. 

\begin{defi}[Definition 2.6 in \cite{GaS}]\label{defi s}
An USC (resp., LSC) function $u$ in an open set $\Omega\subset \X$ is called a
{slope-based viscosity subsolution} (resp., {slope-based viscosity supersolution}) or 
 {s-subsolution} (resp., {s-supersolution}) of \eqref{stationary eq} if
\beq\label{s-sub eq}
H_{|\nabla \psi_2|^*(x)}(x, u(x),  |\nabla \psi_1|(x)) \le 0
\end{equation}
\beq\label{s-sup eq}
\left(\text{resp., }\quad H^{|\nabla \psi_2|^*(x)}(x, u(x), |\nabla \psi_1|(x)) \ge 0\right)
\end{equation}
holds for any $\psi_1 \in \underline{\mathcal{C}}(\Omega)$ (resp., $\psi_1 \in \overline{\mathcal{C}}(\Omega)$) 
and $\psi_2 \in \text{Lip}_{loc}(\Omega)$  such that $u-\psi_1-\psi_2$ attains a local maximum  (resp., minimum) 
at a point $x \in \Omega$, where, for any $(x, \rho, p)\in \Omega\times \R\times \R$ and $a>0$,
\[
H_a(x, \rho, p) = \inf_{|q-p| \le a}H(x, \rho,  q), \quad H^a(x, \rho, p) = \sup_{|q-p| \le a}H(x, \rho, q) \quad \text{ for $a\geq 0$,}
\] 
and $|\nabla \psi_2|^*(x) = \limsup_{y \to x}|\nabla \psi_2|(y)$. We say that $u\in C(\Omega)$ is an 
s-solution of \eqref{stationary eq} if it is both an s-subsolution and an s-supersolution of \eqref{stationary eq}. 
\end{defi}

In the case of \eqref{eikonal eq}, we can define subsolutions (resp.,  supersolutions) by 
replacing \eqref{s-sub eq} (resp., \eqref{s-sup eq}) with 
\beq\label{s-sub eikonal}
|\nabla\psi_1|(x)\leq f(x)+|\nabla \psi_2|^\ast(x)
\eeq
\beq\label{s-super eikonal}
\left(\text{resp.,} \quad |\nabla\psi_1|(x)\geq f(x)-|\nabla \psi_2|^\ast(x) \right).
\eeq

When $\X=\R^N$, it is not difficult to see that $C^1(\Omega)\subset \ol{\mathcal{C}}(\Omega)\cap \ul{\mathcal{C}}(\Omega)$ 
for any open set $\Omega\subset \X$. Hence,  s-subsolutions, s-supersolutions and s-solutions of \eqref{stationary eq} 
in this case reduce to  conventional viscosity subsolutions, supersolution and solutions respectively. 

Concerning the test functions in a general geodesic or length space $(\X, d)$, it is known that, for any $k\geq 0$, 
$x_0\in \X$, the function $x\mapsto k\varphi(d(x, x_0))$ (resp., $x\mapsto -k\varphi(d(x, x_0))$) belongs to the 
class $\ul{\mathcal{C}}(\Omega)$ (resp., $\ol{\mathcal{C}}(\Omega)$) provided that $\varphi\in C^1([0, \infty))$ 
satisfies $\varphi'(0)=0$ and $\varphi'\geq 0$; see details in \cite[Lemma 7.2]{GaS2} and \cite[Lemma 2.3]{GaS}. 

Comparison principles for s-solutions are given in \cite[Theorem 5.3]{GaS} for the eikonal equation and 
in \cite[Theorem 5.1, Theorem 5.3]{GaS} for more general Hamilton-Jacobi equations.

\section{Equivalence between curve- and metric-based solutions}\label{sec:cs}

We give a proof of Theorem \ref{thm equiv} in this section. Let us begin with some elementary results on the 
space $\X$ with the intrinsic metric $\tilde{d}$. Then we show that any c-subsolutions and c-supersolutions 
are respectively s-subsolutions and s-supersolutions. 
\subsection{Metric change}
Let $(\X, d)$ be a complete rectifiably connected metric space. We compare the notions of viscosity 
solutions to \eqref{eikonal eq} provided respectively in 
Definition \ref{defi c} and in Definition \ref{defi s}. The key to our argument is 
to use \eqref{int metric} to connect the geometric setting of two notions.

It is clear that
\beq\label{metric comparison}
d(x, y)\leq \tilde{d}(x, y)\quad \text{for all }x, y\in \X.
\eeq
Therefore bounded sets in $(\X, \tilde{d})$ are bounded in $(\X, d)$. 
Under the assumption of rectifiable connectedness of $(X, d)$, we also see that $\tilde{d}(x, y)<\infty$ 
for any $x, y\in \X$ and $\tilde{d}$ is a metric on $\X$. Moreover, by \eqref{metric comparison}, it is 
clear that open sets in $(\X, d)$ are also open in $(\X, \tilde{d})$.

The metric $\tilde{d}$ is also used in \cite{GHN} to study the 
continuity and stability of c-solutions.
In the rest of this work,  $B_r(x)$ denotes the open ball centered at $x\in \X$ with radius $r>0$ with 
respect to the intrinsic metric $\tilde{d}$. 

The induced intrinsic structure leads us to the following elementary fact that $(\X, \tilde{d})$ is a length space. 
One can find this classical result in \cite[Proposition 2.3.12]{BBIbook} and \cite[Corollary 2.1.12]{Pa} for instance. 
See also~\cite{DJS} for the arc-length parametrization of rectifiable curves with respect to $d$ and
with respect to $\tilde{d}$.

\begin{lem}[Length space under intrinsic metric]\label{lem length}
Assume that $(\X, d)$ is a complete rectifiably connected metric space.
Let $\tilde{d}$ be the intrinsic metric of a metric space $(\X, d)$ as defined in \eqref{int metric}. 
Then $(\X, \tilde{d})$ is a length space. Moreover, for any rectifiable curve $\xi$, $\xi(s): I\to \X$ is a 
parametrization with respect to $d$ if and only if it is a parametrization with respect to $\tilde{d}$ and the speed $|\xi'|$ in both metrics coincide.
\end{lem}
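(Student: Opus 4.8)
The plan is to establish the two assertions of Lemma~\ref{lem length} separately, the first being the classical fact that the intrinsic metric is a length metric, and the second being an invariance statement about parametrizations and speeds. For the length-space claim, I would argue directly from the definition of $\tilde{d}$ in~\eqref{int metric}. The inequality $d\le\tilde{d}$ from~\eqref{metric comparison} shows $d$-rectifiable curves are $\tilde{d}$-continuous, so the infimum in~\eqref{int metric} is over a nonempty family and $\tilde d$ is a genuine metric under rectifiable connectedness. To see $(\X,\tilde d)$ is a length space, I must show that for any $x,y$ and any $\vep>0$ there is a curve from $x$ to $y$ whose \emph{$\tilde d$-length} is at most $\tilde d(x,y)+\vep$. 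The natural candidate is a near-optimal $d$-rectifiable curve $\xi$ joining $x$ and $y$ with $\ell_d(\xi)\le\tilde d(x,y)+\vep$; the crux is then to show $\ell_{\tilde d}(\xi)=\ell_d(\xi)$ for every rectifiable $\xi$, i.e. that passing to the intrinsic metric does not change the length of an already-rectifiable curve. Once that identity is in hand, the length-space property is immediate.

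So the heart of the matter is the identity $\ell_{\tilde d}(\xi)=\ell_d(\xi)$ and the accompanying statement that arc-length parametrization with respect to $d$ and with respect to $\tilde d$ agree. One direction, $\ell_{\tilde d}(\xi)\ge\ell_d(\xi)$, is free from $d\le\tilde d$ applied termwise in the supremum over partitions. For the reverse inequality $\ell_{\tilde d}(\xi)\le\ell_d(\xi)$, fix a partition $a=t_0<\cdots<t_k=b$; then $\sum_j\tilde d(\xi(t_j),\xi(t_{j+1}))\le\sum_j\ell_d(\xi|_{[t_j,t_{j+1}]})=\ell_d(\xi)$, because the restriction $\xi|_{[t_j,t_{j+1}]}$ is itself a rectifiable curve joining $\xi(t_j)$ and $\xi(t_{j+1})$, hence admissible in the infimum defining $\tilde d(\xi(t_j),\xi(t_{j+1}))$. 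Taking the supremum over partitions gives $\ell_{\tilde d}(\xi)\le\ell_d(\xi)$, so the two lengths coincide. (I should note here that the consistency hypothesis~\eqref{eq ast}, ensuring $\tilde d\to0$ as $d\to0$, is what guarantees a $d$-continuous curve is $\tilde d$-continuous and conversely, so ``$\xi$ is a curve'' is unambiguous; the excerpt already records this observation.)

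From the length identity the parametrization statement follows quickly. If $\xi:I\to\X$ is parametrized by $d$-arc length, then for $s_1<s_2$ in $I$ we have $\ell_d(\xi|_{[s_1,s_2]})=s_2-s_1$, and by the identity just proved $\ell_{\tilde d}(\xi|_{[s_1,s_2]})=s_2-s_1$ as well, which is exactly the statement that $\xi$ is parametrized by $\tilde d$-arc length; the converse is symmetric. Consequently the arc-length functions $s_\xi$ computed in the two metrics are literally the same monotone function on $I$, so their a.e. derivatives agree, i.e. $|\xi'|$ is the same in both metrics, whether one uses the definition via $s_\xi'$ or the pointwise limit of difference quotients (the two metric difference quotients $d(\xi(t+\tau),\xi(t))/|\tau|$ and $\tilde d(\xi(t+\tau),\xi(t))/|\tau|$ are squeezed between the same bounds coming from $\ell$ on the subinterval).

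I expect the only genuine obstacle to be a careful treatment of the case $\ell_d(\xi)=\infty$ or of curves that are $\tilde d$-rectifiable but a priori not $d$-rectifiable, where one must make sure the partition argument still closes; but since every $\tilde d$-rectifiable curve is automatically $d$-rectifiable with the same or smaller length by $d\le\tilde d$, and we only ever need the identity for curves already known to be rectifiable, no such pathology arises. Everything else is a matter of unwinding definitions of length and arc-length parametrization, and I would cite \cite[Proposition 2.3.12]{BBIbook}, \cite[Corollary 2.1.12]{Pa} and \cite{DJS} for the standard facts rather than reproving them in full.
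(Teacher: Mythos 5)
Your argument is correct, and it is worth noting that the paper does not actually prove Lemma~\ref{lem length}: it defers entirely to the classical references \cite[Proposition 2.3.12]{BBIbook}, \cite[Corollary 2.1.12]{Pa} and to \cite{DJS} for the parametrization statement. What you have written out is precisely the standard argument behind those citations, so mathematically you are on the same road, just travelling it explicitly: the key identity $\ell_{\tilde d}(\xi)=\ell_d(\xi)$, with ``$\ge$'' coming termwise from $d\le\tilde d$ and ``$\le$'' from the fact that each restriction $\xi\vert_{[t_j,t_{j+1}]}$ competes in the infimum defining $\tilde d(\xi(t_j),\xi(t_{j+1}))$, and then the length-space property and the equality of arc-length functions (hence of speeds, a.e.) as immediate consequences. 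One small refinement: your parenthetical appeal to \eqref{eq ast} to know that a $d$-rectifiable curve is $\tilde d$-continuous is not needed and is better avoided, since the lemma as stated assumes only completeness and rectifiable connectedness, not \eqref{eq ast}; for a rectifiable curve one has $\tilde d(\xi(t),\xi(t'))\le \ell_d(\xi\vert_{[t,t']})=s_\xi(t')-s_\xi(t)$, and the continuity of the arc-length function $s_\xi$ of a continuous rectifiable curve already gives $\tilde d$-continuity, so no extra hypothesis enters. With that adjustment your proof is self-contained and could replace the citation, which is arguably a gain in readability at the cost of a few lines.
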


%
We remark that a similar intrinsic metric is constructed in \cite{DeP1, DeP2, DJS} involving a given 
measure on the space. Ours is standard and simpler, since measures do not play a role in the current work.

The completeness of the metric space is needed to properly define s-solutions under the metric 
$\tilde{d}$. Note that $(\X, \tilde{d})$ is complete if $(\X, d)$ is complete, since $d\leq \tilde{d}$ holds
and $(\X,d)$ satisfies the condition~\eqref{eq ast}.

\subsection{Equivalence between solutions of the Dirichlet problem}

Let us start proving Theorem \ref{thm equiv}. 
We first prove that any c-subsolution is an s-subsolution. We need the following characterization of 
c-subsolutions given in \cite{GHN}. 

\begin{prop}[Proposition 2.6 in \cite{GHN}]\label{prop c-sub}
Assume that $f\in C(\Omega)$ with $f\ge 0$ in $\Omega$. 
Let $u$ be upper semicontinuous in $\Omega$. Then the following statements are equivalent:
\begin{itemize}
\item[(1)] $u$ is a c-subsolution of \eqref{eikonal eq} in $(\Omega, d)$.\\ 
\item[(2)] The inequality 
\beq\label{prop c-sub eq}
u(\xi(t_1))\leq \int_{t_1}^{t_2} f(\xi(r))\, dr+u(\xi(t_2))
\eeq
for all $\xi\in \mathcal{A}(\mathbb{R}, \Omega)$ and $t_1, t_2\in \mathbb{R}$ with $t_1<t_2$.
\end{itemize}
\end{prop}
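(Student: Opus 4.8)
The plan is to establish the two implications $(2)\Rightarrow(1)$ and $(1)\Rightarrow(2)$ separately; the first is an elementary difference-quotient computation, while the second reduces to a one-dimensional viscosity argument along a fixed admissible curve. Throughout I use that for $\xi\in\A(\R,\Omega)$ the composition $f\circ\xi$ is continuous on $\R$ (since $f\in C(\Omega)$ and $\xi$ is continuous with values in $\Omega$), so that $t\mapsto\int_a^t f(\xi(r))\,dr$ is $C^1$; and that translating or reversing an admissible curve again yields an admissible curve.

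\emph{Proof of $(2)\Rightarrow(1)$.} Fix $x\in\Omega$, $\xi\in\A_x(\R,\Omega)$ and $\phi\in C^1(\R)$ such that $t\mapsto u(\xi(t))-\phi(t)$ attains a local maximum at $t=0$, say $u(\xi(t))-\phi(t)\le u(x)-\phi(0)$ for $|t|\le\delta$. For $0<h\le\delta$, apply \eqref{prop c-sub eq} with $(t_1,t_2)=(0,h)$ and combine with $u(\xi(h))\le u(x)-\phi(0)+\phi(h)$ to obtain $\phi(0)-\phi(h)\le\int_0^h f(\xi(r))\,dr$; dividing by $h$ and letting $h\to0^+$ gives $-\phi'(0)\le f(x)$. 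Applying the same estimate to the reversed curve $t\mapsto\xi(-t)\in\A_x(\R,\Omega)$ and the test function $t\mapsto\phi(-t)$ yields $\phi'(0)\le f(x)$, hence $|\phi'(0)|\le f(x)$, which is \eqref{eq c-sub}.

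\emph{Proof of $(1)\Rightarrow(2)$.} Fix $\xi\in\A(\R,\Omega)$ and $t_1<t_2$, and set $v:=u\circ\xi$, a USC function on $\R$. First observe that $v$ is a one-dimensional viscosity subsolution on all of $\R$: if $\phi\in C^1(\R)$ touches $v$ from above at some $t_0\in\R$, then $\sigma\mapsto\xi(\sigma+t_0)$ belongs to $\A_{\xi(t_0)}(\R,\Omega)$ and $\sigma\mapsto\phi(\sigma+t_0)$ is an admissible test function for it, so Definition~\ref{defi c} gives $|\phi'(t_0)|\le f(\xi(t_0))$. Now suppose \eqref{prop c-sub eq} fails, i.e. $v(t_1)>v(t_2)+\int_{t_1}^{t_2}f(\xi(r))\,dr$. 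Choose $\epsilon>0$ small enough that $v(t_1)>v(t_2)+\int_{t_1}^{t_2}(f(\xi(r))+\epsilon)\,dr$, and set $\Phi(t):=v(t)+\int_{t_1}^{t}(f(\xi(r))+\epsilon)\,dr$, a USC function on $\R$. Then $\Phi(t_2)<\Phi(t_1)$, so a maximum point $\tau$ of $\Phi$ over $[t_1,t_2]$ satisfies $\tau\ne t_2$; and $\tau$ can be taken in the interior (if necessary one enlarges the test interval to the left, which is legitimate because the subsolution inequality above holds at every real time). At such an interior point, the $C^1$ function $\phi(t):=\Phi(\tau)-\int_{t_1}^{t}(f(\xi(r))+\epsilon)\,dr$ touches $v$ from above at $\tau$, yet $|\phi'(\tau)|=f(\xi(\tau))+\epsilon>f(\xi(\tau))$, contradicting the subsolution inequality. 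Hence \eqref{prop c-sub eq} holds, and the equivalence follows.

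The main obstacle is the passage from the viscosity inequality to the integral inequality in $(1)\Rightarrow(2)$ when $u$, and hence $v$, is only upper semicontinuous: one must rule out that the extremal point $\tau$ above escapes to $t_1$, and must ensure that a $C^1$ function genuinely touches $v$ from above there. The penalization $+\epsilon\int_{t_1}^{t}(\cdots)$ turns $v$ into a strict subsolution and forces the contact to occur at an interior maximum, while the escape of $\tau$ to $t_1$ is excluded because admissible curves in Definition~\ref{defi c} are defined on all of $\R$ with values in $\Omega$, so the test interval may be enlarged freely (and when $u$ is additionally bounded above, as in the settings used later in this paper, this point is immediate). The converse implication $(2)\Rightarrow(1)$, by contrast, presents no difficulty.
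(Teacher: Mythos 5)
First, note that the paper does not prove this statement at all: it is imported verbatim as Proposition~2.6 of \cite{GHN}, so your proposal is being measured against the argument in that reference rather than anything in this paper. Your direction $(2)\Rightarrow(1)$ is correct and complete: the difference-quotient estimate together with the reversed curve $t\mapsto\xi(-t)$ (which is again admissible) gives both signs of $\phi'(0)$, exactly as one would expect.

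The direction $(1)\Rightarrow(2)$, however, has a genuine gap at the left endpoint. Your maximization of $\Phi(t)=v(t)+\int_{t_1}^{t}(f(\xi(r))+\epsilon)\,dr$ over $[t_1,t_2]$ correctly excludes $\tau=t_2$, but the case $\tau=t_1$ is not resolved by ``enlarging the test interval to the left'': for each enlargement $[t_1-\delta,t_2]$ the maximum may again be attained only at the new left endpoint, and your own computation shows that this forces $v(t_1-\delta)\geq v(t_1)+\epsilon\delta$ for all $\delta$, which is only absurd if $v=u\circ\xi$ is bounded above on $(-\infty,t_2]$. The proposition assumes merely that $u$ is upper semicontinuous on $\Omega$; $u$ need not be bounded above, and the backward curve need not stay in a compact subset of $\Omega$, so the escape to the left cannot be excluded as you state it (you acknowledge this, but the hypothesis you invoke is not available here). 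The standard repair is a boundary penalization rather than interval enlargement: fix $a<t_1$ and, for small $\eta>0$, maximize $\Phi_\eta(t):=v(t)+\int_{t_1}^{t}(f(\xi(r))+\epsilon)\,dr-\eta/(t-a)$ over $(a,t_2]$. Since $v$ is USC and hence bounded above on the compact interval $[a,t_2]$ while $\Phi_\eta\to-\infty$ as $t\downarrow a$, the supremum is attained at some $\tau\in(a,t_2]$, and for $\eta$ small one has $\Phi_\eta(t_1)>\Phi_\eta(t_2)$, so $\tau\in(a,t_2)$ is a two-sided local maximum. Touching $v$ there from above by the corresponding test function (modified away from a neighborhood of $\tau$ so as to lie in $C^1(\R)$, which is harmless since only the germ at $\tau$ matters in Definition~\ref{defi c}) yields $|\phi'(\tau)|=f(\xi(\tau))+\epsilon+\eta/(\tau-a)^2>f(\xi(\tau))$, contradicting \eqref{eq c-sub}. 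With this substitution for your ``enlarge to the left'' step, the proof is complete and follows the same one-dimensional reduction used in \cite{GHN}.
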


Such characterization enables us to deduce the local Lipschitz continuity of c-subsolutions with respect to the metric $\tilde{d}$.

\begin{lem}[Local Lipschitz continuity of c-subsolutions]\label{lem c-lip}
Let  $(\X, d)$ be a complete rectifiably connected metric space and $\tilde{d}$ be the intrinsic metric 
given by \eqref{int metric}. Assume that \eqref{eq ast} holds. 
Let $\Omega\subsetneq \X$ be an open set.  
Assume that $f\in C(\Omega)$ and $f\geq 0$  in $\Omega$. 
If $u$ is upper semicontinuous in $\Omega$ and is a c-subsolution of \eqref{eikonal eq} in $(\Omega, d)$, then 
$u\in \lipl(\Omega)$. In particular, for any $x_0\in \Omega$ and $r>0$ such that 
$B_{2r}(x_0)\subset \Omega$ and $f$ is bounded in $B_{2r}(x_0)$, $u$ satisfies
\beq\label{local lip precise}
|u(x)-u(y)|\leq \tilde{d}(x, y)\sup_{B_{2r}(x_0)} f \quad \text{for all $x, y\in B_r(x_0)$.}
\eeq
\end{lem}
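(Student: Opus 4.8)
The plan is to use the control-theoretic characterization of c-subsolutions from Proposition~\ref{prop c-sub} together with the length-space structure of $(\X,\tilde d)$ established in Lemma~\ref{lem length}. Fix $x_0\in\Omega$ and $r>0$ with $B_{2r}(x_0)\subset\Omega$ and $M:=\sup_{B_{2r}(x_0)}f<\infty$; the goal is the estimate \eqref{local lip precise} on $B_r(x_0)$, from which $u\in\lipl(\Omega)$ follows immediately since such balls cover $\Omega$ (using that $f$, being continuous, is bounded on a neighborhood of each point, and shrinking $r$ if necessary). Take $x,y\in B_r(x_0)$. By the definition \eqref{int metric} of $\tilde d$ and since $\tilde d(x,y)<2r$, for any $\vep>0$ small there is a rectifiable curve from $x$ to $y$ of length less than $\tilde d(x,y)+\vep<2r$; reparametrizing by arc length (legitimate by Lemma~\ref{lem length}, and the speed is then $1$ with respect to $d$) we obtain an admissible curve $\eta\in\A([0,L],\X)$ with $\eta(0)=x$, $\eta(L)=y$, and $L<\tilde d(x,y)+\vep$. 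Because every point of $\eta$ lies within $\tilde d$-distance $L<2r$ of $x$, hence within $2r$ of $x_0$, the curve stays inside $B_{2r}(x_0)\subset\Omega$, so $\eta\in\A([0,L],\Omega)$.

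The next step is to extend $\eta$ to a curve defined on all of $\R$ (or at least reindex so that $0$ is an interior time) so that Proposition~\ref{prop c-sub}(2) applies; one can, for instance, translate the parameter so the domain contains $0$ in its interior and, if Definition~\ref{defi c} is read as requiring curves on $\R$, prepend and append constant pieces — but in fact the cleanest route is to invoke the equivalence (1)$\Leftrightarrow$(2) of Proposition~\ref{prop c-sub} directly, noting the inequality \eqref{prop c-sub eq} is stated for curves $\xi\in\A(\R,\Omega)$, and that a curve on a compact interval inside $\Omega$ can be extended to such a curve by constant extension outside (the constant extension remains admissible and stays in $\Omega$ near the endpoint since $\Omega$ is open and the endpoints $x,y$ are interior points of $B_{2r}(x_0)$). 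Applying \eqref{prop c-sub eq} to this (extended) curve over the parameter interval corresponding to $\eta$ gives
\[
u(x)=u(\eta(0))\le\int_0^L f(\eta(s))\,ds+u(\eta(L))\le ML+u(y)\le M\big(\tilde d(x,y)+\vep\big)+u(y).
\]
Letting $\vep\downarrow0$ yields $u(x)-u(y)\le M\,\tilde d(x,y)$. Swapping the roles of $x$ and $y$ gives the reverse inequality, hence $|u(x)-u(y)|\le M\,\tilde d(x,y)$ for all $x,y\in B_r(x_0)$, which is exactly \eqref{local lip precise}.

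I expect the main (and essentially only) obstacle to be the bookkeeping around the domain of definition of the admissible curves: Proposition~\ref{prop c-sub} and Definition~\ref{defi c} phrase things for curves defined on $\R$ and landing in $\Omega$, whereas the natural competitor here is a short arc-length-parametrized curve on a compact interval. One must verify carefully that the constant extension past the endpoints keeps the curve inside $\Omega$ — this uses that $x,y$ are interior to $B_{2r}(x_0)$, not merely in $\overline{B_{2r}(x_0)}$, so a short constant stub stays in $\Omega$ — and that this does not disturb the integral inequality \eqref{prop c-sub eq} over the relevant subinterval. A secondary point worth a line is the justification that a curve short in the $\tilde d$-sense can be taken to remain in $B_{2r}(x_0)$: this is the standard observation that along such a curve the running $\tilde d$-distance from the starting point is bounded by the arc length traversed so far, which is at most $L<2r$. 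Everything else is routine once Lemma~\ref{lem length} (arc-length reparametrization and equality of speeds in $d$ and $\tilde d$) and Proposition~\ref{prop c-sub} are in hand.
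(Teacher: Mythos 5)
Your approach is the same as the paper's: take an arc-length parametrized curve from $x$ to $y$ of length at most $\tilde d(x,y)+\vep$, apply the characterization \eqref{prop c-sub eq} from Proposition~\ref{prop c-sub}, bound the integral by the length times $\sup_{B_{2r}(x_0)}f$, and let $\vep\to 0$; your constant-extension remark just fills in the (harmless) bookkeeping that Proposition~\ref{prop c-sub} is phrased for curves defined on all of $\R$, a point the paper passes over silently. The one step you should repair is the containment $\eta\subset B_{2r}(x_0)$: knowing only that every point of $\eta$ is within $\tilde d$-distance $L<2r$ of $x$ yields $\tilde d(x_0,\cdot)<3r$, not $<2r$, and with that weaker bound you would need $B_{3r}(x_0)\subset\Omega$ and $\sup_{B_{3r}(x_0)}f$, which is not what \eqref{local lip precise} asserts. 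The correct (and equally short) argument, which is what the paper implicitly uses, is two-sided: choose $0<\vep<2r-\tilde d(x,y)$ so that $L\le \tilde d(x,y)+\vep<2r$, and note that for $z=\eta(t)$ one has $\tilde d(x,z)\le t$ and $\tilde d(z,y)\le L-t$, hence $\min\{\tilde d(x,z),\tilde d(z,y)\}\le L/2<r$; since both $x$ and $y$ lie in $B_r(x_0)$, this gives $\tilde d(x_0,z)<2r$. With that line inserted, your proof is complete and coincides with the paper's.
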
       

\begin{proof}
Fix arbitrarily $x_0\in \Omega$ and $r>0$ small such that $B_{2r}(x_0)\subset \Omega$ (with 
respect to $\tilde{d}$) and $f$ is bounded on $B_{2r}(x_0)$. 
For any $x, y\in B_r(x_0)$ and any $0<\vep< 2r-\tilde{d}(x, y)$, there exists 
an arc-length parametrized rectifiable curve $\xi_0$ joining $x$ and $y$ satisfying 
\[
\ell(\xi_0)\leq \tilde{d}(x, y)+\vep< 2r. 
\]
It follows that $\xi_0\subset B_{2r}\subset \Omega$. Applying the characterization of c-subsolutions in 
Proposition~\ref{prop c-sub}(ii) with $\xi=\xi_0$, we have 
\[
u(x)-u(y)\leq (\tilde{d}(x, y)+\vep)\sup_{B_{2r}(x_0)} f. 
\]
Passing to the limit $\vep\to 0$, we get 
\[
u(x)-u(y)\leq \tilde{d}(x, y)\sup_{B_{2r}(x_0)} f.
\]
Exchanging the roles of $x$ and $y$, we thus obtain \eqref{local lip precise}. 
\end{proof}

We next continue to use Proposition \ref{prop c-sub} to show that c-subsolutions of \eqref{eikonal eq} are 
s-subsolutions with respect to the metric $\tilde{d}$.
 
\begin{prop}[Implication of subsolution property]\label{prop sub}
Let  $(\X, d)$ be a complete rectifiably connected metric space and $\tilde{d}$ be the intrinsic metric 
given by \eqref{int metric}.  Assume that \eqref{eq ast} holds. 
Let $\Omega\subsetneq\X$ be an open set. Assume that $f\in C(\Omega)$ and  $f\geq 0$  in $\Omega$. 
If $u$ is upper semicontinuous in $\Omega$ and is a c-subsolution of \eqref{eikonal eq} in $(\Omega, d)$, then $u$ is an 
s-subsolution of \eqref{eikonal eq} in $(\Omega, \tilde{d})$. 
\end{prop}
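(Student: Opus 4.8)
The plan is to show that a c-subsolution $u$ satisfies the defining inequality \eqref{s-sub eikonal} of an s-subsolution at every point of local maximum of $u-\psi_1-\psi_2$, using the characterization from Proposition~\ref{prop c-sub}. First I would fix $x\in\Omega$, test functions $\psi_1\in\ul{\mathcal C}(\Omega)$ and $\psi_2\in\lipl(\Omega)$ such that $u-\psi_1-\psi_2$ attains a local maximum at $x$, and I would need to establish that $|\nabla\psi_1|(x)\le f(x)+|\nabla\psi_2|^\ast(x)$. Since $\psi_1\in\ul{\mathcal C}(\Omega)$, we have $|\nabla\psi_1|(x)=|\nabla^-\psi_1|(x)$, so it suffices to produce a sequence $y_k\to x$ along which $(\psi_1(x)-\psi_1(y_k))/\tilde d(x,y_k)$ tends to $|\nabla^-\psi_1|(x)$ and to bound this ratio using the curve-based inequality.

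The key step is to feed a short arc-length parametrized curve into Proposition~\ref{prop c-sub}(2). For each $y$ near $x$, pick a nearly length-minimizing admissible curve $\xi$ (arc-length parametrized, speed $1$) with $\xi(0)=x$ and $\xi(\ell)=y$ where $\ell\approx\tilde d(x,y)$; by Lemma~\ref{lem length} admissibility is unaffected by the metric change, and for $y$ close enough to $x$ the whole curve stays inside $\Omega$ because $(\X,\tilde d)$ is a length space and $\Omega$ is open. Applying \eqref{prop c-sub eq} with $t_1=0$, $t_2=\ell$ gives $u(x)-u(y)\le\int_0^\ell f(\xi(r))\,dr\le \ell\,\sup_{B_\delta(x)}f$ for small $\delta$; more precisely, by continuity of $f$ at $x$, $u(x)-u(y)\le(f(x)+o(1))\,\tilde d(x,y)$ as $y\to x$. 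On the other hand, the local maximum property at $x$ gives, for $y$ near $x$, $u(x)-u(y)\ge(\psi_1(x)-\psi_1(y))+(\psi_2(x)-\psi_2(y))$. Combining the two and dividing by $\tilde d(x,y)$ yields
\[
\frac{\psi_1(x)-\psi_1(y)}{\tilde d(x,y)}\le f(x)+o(1)+\frac{\psi_2(y)-\psi_2(x)}{\tilde d(x,y)}.
\]
Taking a sequence $y_k\to x$ realizing $|\nabla^-\psi_1|(x)$ on the left, and bounding the last term by $|\nabla^+\psi_2|(y_k)\le|\nabla\psi_2|(y_k)$ (or more simply $\le\sup$ over small balls), then passing to the $\limsup$, gives $|\nabla\psi_1|(x)=|\nabla^-\psi_1|(x)\le f(x)+\limsup_{y\to x}|\nabla\psi_2|(y)=f(x)+|\nabla\psi_2|^\ast(x)$, which is exactly \eqref{s-sub eikonal}.

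The main obstacle is handling the $\psi_2$ term carefully: $\psi_2$ is only locally Lipschitz with no structural assumption, so along the chosen sequence $y_k$ one must control $(\psi_2(y_k)-\psi_2(x))/\tilde d(x,y_k)$ by something converging to $|\nabla\psi_2|^\ast(x)$. The clean way is to write $\psi_2(y_k)-\psi_2(x)\le[\,\psi_2(y_k)-\psi_2(x)\,]_+ \le |\nabla^+\psi_2|^\ast(x)\,\tilde d(x,y_k)(1+o(1))$ using the definition of the super-slope and its upper-semicontinuous envelope, and to note $|\nabla^+\psi_2|\le|\nabla\psi_2|$ pointwise, so the envelope is dominated by $|\nabla\psi_2|^\ast(x)$. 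A secondary technical point is ensuring the approximating curves lie in $\Omega$ and have length within $\vep$ of $\tilde d(x,y)$; this is where rectifiable connectedness, the length-space property of $(\X,\tilde d)$ from Lemma~\ref{lem length}, and openness of $\Omega$ are used, exactly as in the proof of Lemma~\ref{lem c-lip}. Once these estimates are in place, letting $\vep\to0$ and $y_k\to x$ finishes the proof.
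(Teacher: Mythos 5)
Your proposal is correct and follows essentially the same route as the paper's proof: use the integral characterization of c-subsolutions (Proposition~\ref{prop c-sub}) along nearly length-minimizing curves staying in a small ball in $(\X,\tilde d)$, combine with the local maximum inequality, divide by $\tilde d(x,y)$, pass to a sequence realizing $|\nabla^-\psi_1|(x)=|\nabla\psi_1|(x)$, and control the $\psi_2$-quotient by $|\nabla^+\psi_2|(x)\le|\nabla\psi_2|(x)\le|\nabla\psi_2|^\ast(x)$. One small caution: your intermediate phrase bounding $(\psi_2(y_k)-\psi_2(x))/\tilde d(x,y_k)$ by $|\nabla^+\psi_2|(y_k)$ is not valid as stated, since the pointwise slope at $y_k$ does not control a difference quotient against the fixed point $x$; the correct estimate is the one you give afterwards, via the super-slope at $x$ itself (with an additive, not multiplicative, error term).
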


\begin{proof}
Since $(\X, \tilde{d})$ is a length space, our notation $\text{Lip}_{loc}(\Omega)$ now denotes the set of all 
locally Lipschitz functions on $\Omega$ with respect to the
intrinsic metric $\tilde{d}$. Note that if $u$ is upper semicontinuous with respect to the metric $d$, 
then it is upper semicontinous with respect to $\tilde{d}$, since $\tilde{d}\to 0$ if and only if $d\to 0$ 
due to \eqref{int metric} and \eqref{eq ast}.

Fix $x_0\in \Omega$ arbitrarily.  Assume that there exists $\psi_1 \in \underline{\mathcal{C}}(\Omega)$ and 
$\psi_2 \in \text{Lip}_{loc}(\Omega)$ such that $u-\psi_1-\psi_2$ attains a local maximum at a point $x_0$. 
So there is some $r_0>0$ with $B_{2r_0}(x_0)\subset\Omega$ such that 
\[
u(x)-u(x_0)\leq (\psi_1+\psi_2)(x)-(\psi_1+\psi_2)(x_0)
\]
for all $x\in B_{r_0}(x_0)$. 

Moreover, for any fixed $\vep\in (0, 1)$, by the continuity of $f$, we can make $0<r_0$ smaller so that  
\begin{equation}\label{general curve2}
|f(x)-f(x_0)|\leq \vep
\end{equation}
if $x\in B_{r_0}(x_0)$. We fix such $r_0>0$ (and keep in mind that $r_0$ now also depends on $\vep$). 

For any $r\in (0, r_0/2)$ and any $x\in \Omega$ with $0<\tilde{d}(x, x_0)<r$, there exists an arc-length 
parametrized curve $\xi$ in $\Omega$ such that $\xi(0)=x_0$ and $\xi(t)=x$, where 
\begin{equation}\label{general curve0}
t=\ell(\xi) \leq \tilde{d}(x, x_0)+\vep\tilde{d}(x, x_0). 
\end{equation}
Applying Proposition \ref{prop c-sub} for such a curve with $t_1=0, t_2=t$, we get 
\[
u(x_0)\leq \int_0^{t} f(\xi(s))\, ds+u(x),
\]
and therefore,
\[
(\psi_1+\psi_2)(x_0)-(\psi_1+\psi_2)(x)\leq \int_0^{{t}}f(\xi(s))\, ds.
\]
Dividing the inequality above by $\tilde{d}(x,x_0)$, we get
\begin{equation}\label{general curve}
{\psi_1(x_0)-\psi_1(x)\over \tilde{d}(x, x_0)}
\leq \frac{1}{\tilde{d}(x, x_0)}\int_0^{{t}}f(\xi(s))\, ds+{\psi_2(x)-\psi_2(x_0)\over \tilde{d}(x, x_0)}.
\end{equation}
Since $\vep<1$ and $r<r_0/2$, we have
\[
\tilde{d}(\xi(s), x_0)\leq t\leq r+\vep r<r_0
\]
for all $s\in [0, t]$, by \eqref{general curve2}. Therefore 
\[
f(\xi(s))\leq f(x_0)+\vep
\]
for all $s\in [0, t]$. Hence \eqref{general curve} yields 
\[
{\psi_1(x_0)-\psi_1(x)\over \tilde{d}(x, x_0)}\leq \frac{t}{\tilde{d}(x, x_0)}(f(x_0)+\vep)+{\psi_2(x)-\psi_2(x_0)\over \tilde{d}(x, x_0)}.
\]
Using \eqref{general curve0} and recalling that the choice of $r_0$ depends on $\vep$, we thus have
\begin{equation}\label{general curve1}
{\psi_1(x_0)-\psi_1(x)\over \tilde{d}(x, x_0)}\leq (1+\vep)(f(x_0)+\vep)+{\psi_2(x)-\psi_2(x_0)\over \tilde{d}(x, x_0)}
\end{equation}
for all $\vep>0$ and all $x\in \Omega$ with $x\in B_{r_0}(x_0)$. 

Since $\psi_1\in \underline{\mathcal{C}}(\Omega)$, 
there exists a sequence of points $x_n\in \Omega$ such that, as $n\to \infty$, we have $x_n\to x_0$ and 
\[
{\psi_1(x_0)-\psi_1(x_n)\over \tilde{d}(x_n, x_0)}\to |\nabla^-\psi_1|(x_0)=|\nabla \psi_1|(x_0).
\]
Adopting \eqref{general curve1} with $x=x_n$ and sending $n\to \infty$ and then $\vep\to 0$, we end up with
the desired inequality \eqref{s-sub eikonal} at $x=x_0$.
\end{proof}

We next show that any c-supersolution is an s-supersolution. 
We again use a result presented in \cite{GHN}. 

\begin{prop}[Proposition 2.8 in \cite{GHN}]\label{prop c-super}
Assume that $f\in C(\Omega)$ with $f\ge 0$. 
Assume $\inf_{\Omega}f>0$. Let $u$ be a lower semicontinuous
c-supersolution of \eqref{eikonal eq}. Then for any $\vep>0$ and $x_0\in \Omega$, there exists 
$\xi_\vep\in \mathcal{A}_{x_0}([0, \infty), \Omega)$ satisfying $T=T_\Omega^+[\xi_\vep]<\infty$ and 
\beq\label{prop c-super eq}
u(x_0)\geq \int_0^{t} f(\xi_\vep(s))\, ds+u(\xi_\vep(t))-\vep(1+t)
\eeq
for all $0\leq t\leq T$.
\end{prop}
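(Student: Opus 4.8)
The plan is to extract the curve directly from the definition of a c-supersolution and then convert the one-dimensional viscosity inequality along it into the integral estimate. Fix $x_0\in\Om$ and $\vep>0$; since a larger $\vep$ only weakens the conclusion and the same curve will work, we may assume $\vep<\inf_\Om f$. Applying the c-supersolution part of Definition~\ref{defi c} at $x_0$ with parameter $\vep$ yields an admissible curve $\xi\in\A_{x_0}(\R,\X)$ with $-\infty<T^{\pm}=T^{\pm}_\Om[\xi]<\infty$ and a function $w$, lower semicontinuous on $(T^-,T^+)$, satisfying $w(0)=u(x_0)$, $w\geq u\circ\xi-\vep$, and $|\phi'(t_0)|\geq f(\xi(t_0))-\vep=:g(t_0)$ whenever $\phi\in C^1(\R)$ touches $w$ from below at some $t_0\in(T^-,T^+)$. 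Since $\xi(t)\in\Om$ for $t\in(T^-,T^+)$ and $\vep<\inf_\Om f$, the continuous function $g$ is strictly positive there. Taking $T:=T^+_\Om[\xi]\in(0,\infty)$ (and extending $\xi$ to $[0,\infty)$ by a constant, which changes neither the exit time nor the conclusion) already gives the finiteness statement; and because $w(0)=u(x_0)$ and $w\geq u\circ\xi-\vep$, inequality \eqref{prop c-super eq} reduces to the purely one-dimensional estimate $w(0)-w(t)\geq\int_0^t g(s)\,ds$ for $0\leq t<T$, the endpoint $t=T$ then following by a standard limiting argument using the lower semicontinuity of $u$ and monotone convergence.

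Next I would show that, after possibly replacing $\xi$ by its time-reversal, $w$ is non-increasing on $[0,T)$. The key point is that $w$ has no interior local minimum on $(T^-,T^+)$: at such a point $t_0$ the constant $\phi\equiv w(t_0)$ would touch $w$ from below and force $0=|\phi'(t_0)|\geq g(t_0)>0$. From this and the lower semicontinuity of $w$ one deduces that $w$ is non-increasing on $[0,T^+)$ or non-decreasing on $(T^-,0]$: otherwise there are $c<d\leq 0\leq a<b$ in $(T^-,T^+)$ with $w(c)>w(d)$ and $w(a)<w(b)$, and the minimum of $w$ over the compact interval $[c,b]$---attained since $w$ is lower semicontinuous---can be neither interior, nor at $b$ (which would give $w(b)\leq w(a)$), nor at $c$ (which would give $w(c)\leq w(d)$), a contradiction. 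In the first case keep $\xi$; in the second, replace $(\xi,w)$ by $(s\mapsto\xi(-s),\,s\mapsto w(-s))$, which still satisfies all conditions of Definition~\ref{defi c} at $x_0$ (now with $g(s)$ replaced by $g(-s)$), has finite forward exit time $-T^-$, and makes the reversed $w$ non-increasing. After relabelling we may assume $w$ is non-increasing on $[0,T)$.

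The decisive step is then the assertion that a non-increasing, lower semicontinuous viscosity supersolution $w$ of $|w'|=g$, with $g$ continuous and positive, satisfies $w(0)-w(t)\geq\int_0^t g$ for $0\leq t<T$. I would prove this by regularizing $w$ from below by inf-convolution, $w_\lambda(t):=\inf_{s}\{w(s)+\tfrac{1}{2\lambda}(t-s)^2\}$. On compact subintervals and for small $\lambda$, $w_\lambda$ is $\lambda^{-1}$-semiconcave (hence locally Lipschitz and, by Alexandrov's theorem, twice differentiable a.e.), remains non-increasing, increases to $w$ as $\lambda\downarrow 0$, and---by the standard inf-convolution argument for viscosity supersolutions---is a supersolution of $|w_\lambda'|=g-\omega(\lambda)$ for some modulus $\omega$. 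At a.e.\ point $t_0$ of twice-differentiability, the parabola $\phi(t)=w_\lambda(t_0)+w_\lambda'(t_0)(t-t_0)+\tfrac12\bigl(w_\lambda''(t_0)-1\bigr)(t-t_0)^2$ touches $w_\lambda$ from below, whence $|w_\lambda'(t_0)|\geq g(t_0)-\omega(\lambda)$; since $w_\lambda$ is absolutely continuous and non-increasing, integration gives $w_\lambda(0)-w_\lambda(t)=\int_0^t|w_\lambda'|\geq\int_0^t g-t\,\omega(\lambda)$, and letting $\lambda\downarrow 0$ yields the claim for $w$. Combining with $w(0)=u(x_0)$ and $w(t)\geq u(\xi(t))-\vep$ gives $u(x_0)\geq u(\xi(t))+\int_0^t f(\xi(s))\,ds-\vep(1+t)$, which is \eqref{prop c-super eq}.

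The step I expect to be the main obstacle is precisely this last one: converting the viscosity inequality for the merely lower semicontinuous $w$ into a pointwise bound $|w'|\geq g$ a.e., since affine or quadratic functions cannot in general serve as test functions from below at points where $w$ is only once differentiable; the inf-convolution (equivalently, a sup-convolution of $-w$) is what circumvents this. It is also here, together with the exclusion of interior local minima, that the hypothesis $\inf_\Om f>0$ is essential---it guarantees $g>0$, which both secures the monotonicity of $w$ and prevents $w$ from becoming locally flat, so that the supersolution genuinely decreases at rate at least $g$. The remaining points---the time-reversal bookkeeping, the constant extension of $\xi$, and the terminal time $t=T$---are routine.
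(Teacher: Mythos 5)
The paper offers no proof of this proposition: it is quoted as Proposition~2.8 of \cite{GHN}, so there is nothing internal to compare your argument against. Judged on its own, your reconstruction is correct and has the right skeleton: extract the pair $(\xi,w)$ from Definition~\ref{defi c}, use $\varepsilon<\inf_\Omega f$ to rule out interior local minima of $w$ (constant test functions), deduce via the LSC-minimum-on-$[c,b]$ argument that $w$ is non-increasing forward in time or non-decreasing backward, reverse time if necessary, and then convert the one-dimensional viscosity inequality $|w'|\ge f\circ\xi-\varepsilon$ into the integral decay estimate $w(0)-w(t)\ge\int_0^t(f(\xi(s))-\varepsilon)\,ds$, which together with $w(0)=u(x_0)$ and $w\ge u\circ\xi-\varepsilon$ gives \eqref{prop c-super eq}. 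This is essentially the same reduction to a 1D statement along the curve that underlies the cited result.

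Two caveats on the details. First, your monotonicity is only established on $[0,T^+)$ (resp. on the backward side), while $w$ need not be monotone, or even bounded below, on all of $(T^-,T^+)$; so the inf-convolution must be taken of $w$ restricted to a compact subinterval of $[0,T^+)$, and then the minimizing point can sit at the left endpoint $0$, where the ``magic'' transfer of the supersolution property fails. This is harmless — integrate $|w_\lambda'|\ge g-\omega(\lambda)$ only on $(\alpha,t)$ and use $w(\alpha)\le w(0)$ before letting $\alpha\downarrow0$ — but as written your claims that $w_\lambda$ ``remains non-increasing'' and is a supersolution on the whole interval are not quite justified. Alternatively, the 1D step can be done more lightly: once $w$ is non-increasing on $[0,T^+)$, every admissible test slope at an interior point is nonpositive, so $W(s):=w(s)+\int_0^s(f(\xi)-\varepsilon)$ is an LSC supersolution of $-W'\ge0$ there; a pole barrier $-\delta/(s-a)$ with $a\in(0,t)$ shows $W$ is non-increasing on $(0,T^+)$, and monotonicity of $w$ again handles the passage to $s=0$. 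Second, the endpoint $t=T$: since $\xi(T)\notin\Omega$ while $u$ is only assumed LSC in $\Omega$, the inequality at $t=T$ (and the phrase ``$\xi_\varepsilon\in\mathcal{A}_{x_0}([0,\infty),\Omega)$ with finite exit time'') only makes sense if $u$ extends LSC to $\xi(T)$; this is an artifact of the statement as transcribed rather than of your proof, and the paper only ever invokes \eqref{prop c-super eq} with $t<T$ (e.g.\ $t=\sqrt{\varepsilon}$ in Proposition~\ref{prop super}). Likewise, your constant-test-function step reads ``attains a minimum'' in Definition~\ref{defi c} as the usual local notion, which is the intended reading in \cite{GHN}.
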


{\begin{rmk}\label{local c-super prop}  
If $u$ is a local c-supersolution instead, then for each $x_0\in \Omega$ there is a sufficiently small 
$r>0$ such that for each $\vep>0$ we can find a choice $\xi_\vep\in\A_{x_0}([0,\infty), B_r(x_0))$
such that for all $0\le t\le T^+_{B_r(x_0)}[\xi_\vep]$, \eqref{prop c-super eq} holds. 
This is seen by directly adapting the proof of \cite[Proposition~2.8]{GHN}.
\end{rmk}

\begin{prop}[Implication of supersolution property]\label{prop super}
Let  $(\X, d)$ be a complete rectifiably connected metric space and $\tilde{d}$ be the intrinsic 
metric given by \eqref{int metric}. Assume that \eqref{eq ast} holds. 
Let $\Omega\subsetneq \X$ be an open set. 
Assume that $f\in C(\Omega)$ with $\inf_{\Omega}f>0$. 
If $u$ is a lower semicontinuous c-supersolution of \eqref{eikonal eq} in $(\Omega, d)$, 
then $u$ is an s-supersolution of \eqref{eikonal eq} in $(\Omega, \tilde{d})$. 
\end{prop}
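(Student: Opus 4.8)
The plan is to verify the defining inequality~\eqref{s-super eikonal} of an s-supersolution directly, feeding the near-optimal trajectories provided by Proposition~\ref{prop c-super} into the test inequality. As a preliminary reduction I would note that all the relevant data are insensitive to replacing $d$ by $\tilde d$: by Lemma~\ref{lem length} the admissible curves and their speeds, hence the exit times $T^\pm_\Omega[\cdot]$ and the integrals $\int f(\xi(s))\,ds$, are the same in both metrics, and $u$ stays lower semicontinuous for $\tilde d$ because $d\le\tilde d$ and~\eqref{eq ast} holds. Thus $u$ is a c-supersolution in $(\Omega,\tilde d)$ and Proposition~\ref{prop c-super} applies with integrals, exit times and $1$-Lipschitz bounds all read with respect to $\tilde d$; from here on slopes, balls and local Lipschitz continuity refer to $\tilde d$.

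Fix $x_0\in\Omega$ together with test functions $\psi_1\in\overline{\mathcal C}(\Omega)$ and $\psi_2\in\text{Lip}_{loc}(\Omega)$ such that $u-\psi_1-\psi_2$ has a local minimum at $x_0$, and fix $\eta>0$. First I would choose $\rho>0$ so small that $B_\rho(x_0)\subset\Omega$, both $\psi_i$ are Lipschitz on $B_\rho(x_0)$, the minimum inequality $u(x)-u(x_0)\ge(\psi_1+\psi_2)(x)-(\psi_1+\psi_2)(x_0)$ holds on $B_\rho(x_0)$, and moreover $f\ge f(x_0)-\eta$, $|\nabla\psi_1|\le|\nabla\psi_1|(x_0)+\eta$ and $|\nabla\psi_2|\le|\nabla\psi_2|^*(x_0)+\eta$ throughout $B_\rho(x_0)$; the last three are available from continuity of $f$, continuity of $|\nabla\psi_1|$ (built into $\overline{\mathcal C}$), and upper semicontinuity of $|\nabla\psi_2|^*$ together with the elementary inequality $|\nabla\psi_2|\le|\nabla\psi_2|^*$. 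For $\vep\in(0,\inf_\Omega f)$ I let $\xi_\vep$ and $T_\vep=T^+_\Omega[\xi_\vep]<\infty$ be as in Proposition~\ref{prop c-super}; since $\xi_\vep$ is admissible, $\tilde d(\xi_\vep(s),x_0)\le s$ for all $s$, so $\xi_\vep([0,\rho))\subset B_\rho(x_0)\subset\Omega$ and hence $T_\vep\ge\rho$, and I fix once and for all $t:=\rho/2\,(<T_\vep)$.

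The core computation combines three facts at this $t$: first, \eqref{prop c-super eq} gives $u(x_0)-u(\xi_\vep(t))\ge\int_0^t f(\xi_\vep(s))\,ds-\vep(1+t)\ge t(f(x_0)-\eta)-\vep(1+t)$; second, the minimum inequality at the point $\xi_\vep(t)\in B_\rho(x_0)$ gives $u(x_0)-u(\xi_\vep(t))\le(\psi_1+\psi_2)(x_0)-(\psi_1+\psi_2)(\xi_\vep(t))$; third, since the local slope is an upper gradient of $\psi_i$ along the $1$-Lipschitz curve $\xi_\vep$, one has $\psi_i(x_0)-\psi_i(\xi_\vep(t))=-\int_0^t(\psi_i\circ\xi_\vep)'(s)\,ds\le\int_0^t|\nabla\psi_i|(\xi_\vep(s))\,ds$, which is at most $t(|\nabla\psi_1|(x_0)+\eta)$ for $i=1$ and at most $t(|\nabla\psi_2|^*(x_0)+\eta)$ for $i=2$. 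Chaining these yields
\[
t\bigl(f(x_0)-|\nabla\psi_1|(x_0)-|\nabla\psi_2|^*(x_0)-3\eta\bigr)\le\vep(1+t).
\]
The left-hand side does not depend on $\vep$ (nor on the trajectory $\xi_\vep$), so letting $\vep\to0$ forces it to be nonpositive; dividing by $t>0$ and then letting $\eta\to0$ gives $|\nabla\psi_1|(x_0)\ge f(x_0)-|\nabla\psi_2|^*(x_0)$, which is~\eqref{s-super eikonal} at $x_0$, so $u$ is an s-supersolution.

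The step I expect to be the main obstacle is the additive error $\vep(1+t)$ in~\eqref{prop c-super eq}: its $\vep$-term cannot be killed by sending the time horizon $t\to0$ since $\vep/t\to\infty$, and because the near-optimal trajectory $\xi_\vep$ changes with $\vep$ one cannot first send $\vep\to0$ along a fixed curve. The resolution is to keep $t$ small but fixed, to bound every $\xi_\vep$-dependent quantity uniformly in $\vep$ by the continuity and semicontinuity of $f$, $|\nabla\psi_1|$ and $|\nabla\psi_2|^*$ near $x_0$, and only afterwards to send $\vep\to0$; this is precisely what lets the argument avoid any compactness of the family of trajectories, so that no properness of $(\X,d)$ is needed here (in contrast with the Euclidean counterpart of this implication, which passes to a limiting trajectory). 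The two auxiliary facts used, $|\nabla\psi_2|\le|\nabla\psi_2|^*$ and that $|\nabla\psi_i|$ is an upper gradient along Lipschitz curves, are both elementary in the length space $(\X,\tilde d)$.
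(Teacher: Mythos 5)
Your proof is correct, but it runs on different machinery than the paper's. The paper also feeds the near-optimal curves of Proposition~\ref{prop c-super} into the test inequality, but instead of fixing a time horizon it couples the horizon to the error by taking $t=\sqrt{\vep}$ and $x_\vep=\xi(\sqrt{\vep})$: dividing~\eqref{prop c-super eq} by $\sqrt{\vep}$ turns the error $\vep(1+\sqrt{\vep})$ into $\sqrt{\vep}(1+\sqrt{\vep})\to 0$, and since $\tilde d(x_\vep,x_0)\le\sqrt{\vep}$ one only needs the difference quotients of $\psi_1,\psi_2$ at the single point $x_\vep\to x_0$ together with the $\limsup$ definition of the slopes; no integration of $|\nabla\psi_i|$ along the curve, no continuity of $|\nabla\psi_1|$, and no upper-gradient lemma are invoked. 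So your remark that the $\vep$-term ``cannot be killed by sending $t\to 0$'' is overstated: it only rules out sending $t\to0$ too fast relative to $\vep$, and the paper's $t=\sqrt{\vep}$ is precisely the standard fix. Your route instead freezes $t=\rho/2$, sends $\vep\to0$ first, and pays for this by controlling the test functions along the whole curve segment, which requires the chain-rule bound $|(\psi_i\circ\xi_\vep)'|\le|\nabla\psi_i|\circ\xi_\vep$ a.e.\ for $1$-Lipschitz curves and the bounds $|\nabla\psi_1|\le|\nabla\psi_1|(x_0)+\eta$, $|\nabla\psi_2|\le|\nabla\psi_2|^*(x_0)+\eta$ on $B_\rho(x_0)$; these are indeed standard (the first is the usual upper-gradient property of the slope of a locally Lipschitz function, valid along every $1$-Lipschitz curve), though the pointwise inequality $|\nabla\psi_2|(x_0)\le|\nabla\psi_2|^*(x_0)$ is either a matter of reading $|\nabla\psi_2|^*$ as the upper semicontinuous envelope (as in \cite{GaS}) or a short length-space argument via the same upper-gradient bound — a point the paper's own passage to the limit relies on as well, so it is not a defect specific to your argument. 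In exchange, your version makes transparent why no compactness of the trajectories (hence no properness of $\X$) is needed, while the paper's version is leaner in that it uses nothing about the test classes beyond the definition of the slope at $x_0$. Both are valid proofs of the proposition.
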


\begin{proof}
Since $u$ is lower semicontinuous with respect to the metric $d$, it is easily seen that $u$ is also lower 
semicontinuous with respect to $\tilde{d}$, thanks to \eqref{int metric} and \eqref{eq ast}. 

Fix $x_0\in \Omega$ arbitrarily.  Assume that there exist $\psi_1 \in \overline{\mathcal{C}}(\Omega)$ and 
$\psi_2 \in \text{Lip}_{loc}(\Omega)$ such that $u-\psi_1-\psi_2$ attains a local minimum at a point $x_0$. We thus have 
$r_0>0$ such that $B_{2r_0}(x_0)\subset\Omega$ and
\[
u(y)-u(x_0)\geq (\psi_1+\psi_2)(y)-(\psi_1+\psi_2)(x_0)
\]
for all $y\in B_{r_0}(x_0)$.  

Applying Proposition \ref{prop c-super}, for any $\vep>0$ satisfying $\sqrt{\vep}<\min\{r,\tilde{d}(x_0, \pO)\}$, 
we can find $\xi\in \mathcal{A}_{x_0}([0, \infty), \Omega)$ such that \eqref{prop c-super eq} 
holds for all $0\leq t\leq T_\Omega^+[\xi]$. Since $T_\Omega^+[\xi]\geq \tilde{d}(x_0, \pO)>0$, 
we can take $t=\sqrt{\vep}$ and $x_\vep=\xi(\sqrt{\vep})$ in \eqref{prop c-super eq} to get
\[
(\psi_1+\psi_2)(x_\vep)-(\psi_1+\psi_2)(x_0)\leq -\int_0^{\sqrt{\vep}}f(\xi(s))\, ds+\vep(1+\sqrt{\vep}).
\]
Dividing this relation by $\sqrt{\vep}$, we get
\beq\label{eq lem super1}
{1\over \sqrt{\vep}}\int_0^{\sqrt{\vep}} f(\xi(s))\, ds+{\psi_2(x_\vep)-\psi_2(x_0)\over \sqrt{\vep}}-\sqrt{\vep}(1+\sqrt{\vep})\le 
{\psi_1(x_0)-\psi_1(x_\vep)\over \sqrt{\vep}}.
\eeq
Noticing that 
\[
\tilde{d}\left(x_\vep, x_0\right)\leq \sqrt{\vep},
\]
we deduce that 
\[
{\psi_1(x_\vep)-\psi_1(x_0)\over \sqrt{\vep}}\le {|\psi_1(x_\vep)-\psi_1(x_0)|\over \sqrt{\vep}}
\le {|\psi_1(x_\vep)-\psi_1(x_0)|\over \tilde{d}\left(x_\vep, x_0\right)}
\]
and
\[
{\psi_2(x_\vep)-\psi_2(x_0)\over \sqrt{\vep}}\ge -{|\psi_2(x_\vep)-\psi_2(x_0)|\over \sqrt{\vep}}
\ge -{|\psi_2(x_\vep)-\psi_2(x_0)|\over \tilde{d}\left(x_\vep, x_0\right)}.
\]
Hence, combining the above inequalities together implies that
\[
{|\psi_1(x_\vep)-\psi_1(x_0)|\over \tilde{d}\left(x_\vep, x_0\right)}
\geq {1\over \sqrt{\vep}}\int_0^{\sqrt{\vep}} f(\xi(s))\, ds- {|\psi_2(x_\vep)-\psi_2(x_0)|\over \tilde{d}\left(x_\vep, x_0\right)}-
\sqrt{\vep}(1+\sqrt{\vep}).
\]
Letting $\vep\to 0$,  we are led to \eqref{s-super eikonal} with $x=x_0$ as desired.
\end{proof}

\begin{rmk}\label{local c to s} 
By Remark \ref{local c-super prop}, it is not difficult to see that if $u$ is only a local c-supersolution, then for any 
$x_0\in \Omega$ the same result as in Proposition \ref{prop super} holds in $B_r(x_0)$ with $r>0$ small. In fact, 
the proof will be the same except that $\Omega$ should be replaced by $B_r(x_0)$. 
\end{rmk}

We now prove Theorem \ref{thm equiv}. 
\begin{proof}[Proof of Theorem \ref{thm equiv}]

In view of Lemma~\ref{lem c-lip}, we know that any c-solution of \eqref{eikonal eq} is locally Lipschitz with respect to $\tilde{d}$.
Now by Proposition~\ref{prop sub} and Proposition~\ref{prop super}
we see that any c-solution $u$ of \eqref{eikonal eq} is a locally Lipschitz s-solution. 
If $u$ satisfies \eqref{new bdry regularity} and $f$ is uniformly continuous with $\inf_{\Omega}f>0$, 
then in the bounded domain $\Omega$ we can apply the comparison principle for 
s-solutions (cf. \cite[Theorem 5.3]{GaS}) to show that $u$ must be the only s- and c-solution of the Dirichlet problem. 
\end{proof}

\subsection{Boundary value}\label{sec:bdry consistency}

In light of the second part of Theorem~\ref{thm equiv}, the importance of the condition \eqref{new bdry regularity}
is clear. We now give sufficient conditions for \eqref{new bdry regularity}, which is also important for the 
existence of c-solutions. Recall that $\zeta$ is a continuous function on $\partial\Omega$, playing the role
of the Dirichlet boundary data in \eqref{bdry cond}.

\begin{prop}[Boundary consistency]\label{prop bdry regularity}
Assume that $(\X, d)$ is a complete rectifiably connected metric space and $\tilde{d}$ be the 
induced intrinsic metric given by \eqref{int metric}. Assume that \eqref{eq ast} holds. Let $\Omega\subsetneq \X$ be an open set. 
Suppose that $f\in C(\Oba)$ is bounded and $f\geq 0$ in $\Oba$. Let $u$ be given by \eqref{eq optimal control} 
with $\zeta\in C(\pO)$ given. 
\begin{enumerate}
\item[(1)] 
If there exists $L>0$ such that  $\zeta$ is $L$-Lipschitz on $\partial\Omega$
with respect to the metric $\tilde{d}$, then 
\beq\label{sol bdry regularity weak}
u(x)- \zeta(y)\leq \tilde{d}(x, y)\max\left\{L,\ \sup_{\Oba} f\right\}\quad \text{for all $x\in \Omega$ and $y\in \pO$.}
\eeq
\item[(2)] If $\zeta$ satisfies a stronger condition:
\beq\label{bdry regularity}
|\zeta(x)-\zeta(y)|\leq  \tilde{d}(x, y)  \inf_{\Oba} f \quad \text{for every $x, y\in \pO$,}
\eeq
then 
\beq\label{sol bdry regularity}
|u(x)- \zeta(y)|\leq \tilde{d}(x, y)\sup_{\Oba} f \quad \text{for all $x\in \Omega$ and $y\in \pO$.}
\eeq
\end{enumerate}
\end{prop}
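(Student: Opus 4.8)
The plan is to analyze the infimum in \eqref{eq optimal control} directly, exploiting the intrinsic metric $\tilde{d}$ and the fact that $(\X,\tilde d)$ is a length space (Lemma~\ref{lem length}). Fix $x\in\Omega$ and $y\in\pO$. For the \emph{upper bound} in both (1) and (2), I would construct a nearly optimal competitor curve that travels almost straight from $x$ to $y$: given $\vep>0$, pick an arc-length parametrized curve $\xi_0\in\A_x$ joining $x$ to $y$ with $\ell(\xi_0)\le \tilde{d}(x,y)+\vep$; since $y\in\pO$, the exit time $T^+_\Omega[\xi_0]$ is at most $\ell(\xi_0)$, so using $\xi_0$ as a test curve in \eqref{eq optimal control} gives
\[
u(x)\le \int_0^{T^+_\Omega[\xi_0]} f(\xi_0(s))\,ds + \zeta\bigl(\xi_0(T^+_\Omega[\xi_0])\bigr)
\le \bigl(\tilde d(x,y)+\vep\bigr)\sup_{\Oba}f + \zeta\bigl(\xi_0(T^+_\Omega[\xi_0])\bigr).
\]
The subtlety is that the exit point $y':=\xi_0(T^+_\Omega[\xi_0])$ need not equal $y$, but $\tilde d(y',y)\le \ell(\xi_0)-T^+_\Omega[\xi_0]\le$ the portion of the curve after exit, which is $\le \tilde d(x,y)+\vep$ minus the part already traversed; more simply $\tilde d(x,y')\le T^+_\Omega[\xi_0]$ and $\tilde d(y',y)\le \ell(\xi_0)-\tilde d(x,y') \le \vep + (\tilde d(x,y)-\tilde d(x,y'))$. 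In case (1), bounding $\zeta(y')$ by $\zeta(y)+L\tilde d(y',y)$ and collecting terms (using $\tilde d(x,y')+\tilde d(y',y)\le \tilde d(x,y)+\vep$) yields $u(x)-\zeta(y)\le (\tilde d(x,y)+\vep)\max\{L,\sup_{\Oba}f\}$, and letting $\vep\to0$ gives \eqref{sol bdry regularity weak}. In case (2), the same computation with $L$ replaced by $\inf_{\Oba}f\le\sup_{\Oba}f$ gives the upper half of \eqref{sol bdry regularity}.

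For the \emph{lower bound} in (2), I need $\zeta(y)-u(x)\le \tilde d(x,y)\sup_{\Oba}f$, equivalently $u(x)\ge \zeta(y)-\tilde d(x,y)\sup_{\Oba}f$. Here I would argue that \emph{every} competitor curve $\xi\in C_x$ in \eqref{eq optimal control} satisfies
\[
\int_0^{T^+_\Omega[\xi]} f(\xi(s))\,ds + \zeta\bigl(\xi(T^+_\Omega[\xi])\bigr)\ \ge\ \zeta(y) - \tilde d(x,y)\sup_{\Oba}f.
\]
Write $T=T^+_\Omega[\xi]$ and $z=\xi(T)\in\pO$. Since $\xi$ is admissible (speed $\le 1$) and arc-length considerations give $\tilde d(x,z)\le T$, we have $\int_0^T f(\xi(s))\,ds\ge T\inf_{\Oba}f\ge \tilde d(x,z)\inf_{\Oba}f$. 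Combining with the strong boundary regularity \eqref{bdry regularity} applied to $z,y\in\pO$:
\[
\int_0^T f(\xi(s))\,ds + \zeta(z)\ \ge\ \tilde d(x,z)\inf_{\Oba}f + \zeta(y) - \tilde d(z,y)\inf_{\Oba}f\ \ge\ \zeta(y) - \bigl(\tilde d(z,y)-\tilde d(x,z)\bigr)\inf_{\Oba}f.
\]
By the triangle inequality $\tilde d(z,y)-\tilde d(x,z)\le \tilde d(x,y)$, and since $\inf_{\Oba}f\le\sup_{\Oba}f$ (and the quantity in parentheses may be negative, in which case we use $\inf_{\Oba}f\ge 0$ instead to keep the bound $\ge -\tilde d(x,y)\sup_{\Oba}f$), we obtain the claimed lower bound for each competitor, hence for the infimum $u(x)$. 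Together with the upper bound this proves \eqref{sol bdry regularity}.

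\textbf{Main obstacle.} The delicate point is the bookkeeping around the exit point: the optimal-control curve does not land exactly at the chosen boundary point $y$, so one must carefully track $\tilde d(x,z)$, $\tilde d(z,y)$, and the length already consumed, and verify that the triangle inequality closes the estimate with the \emph{same} constant $\sup_{\Oba}f$ rather than a worse one. A secondary technical care is the sign issue noted above — when $\tilde d(z,y)<\tilde d(x,z)$ one cannot multiply by $\inf_{\Oba}f$ freely, but replacing $\inf_{\Oba}f$ by $0$ from below (legitimate since $f\ge 0$) repairs the inequality. One must also confirm that $C_x\ne\emptyset$ so that \eqref{eq optimal control} is meaningful; this follows from rectifiable connectedness together with $\Omega\subsetneq\X$ (there is a rectifiable curve from $x$ to any boundary point, giving a finite exit time), so the representation formula is well-defined.
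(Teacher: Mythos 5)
Your proposal is correct and follows essentially the same route as the paper: a near-geodesic competitor curve from $x$ toward $y$, bookkeeping of the exit point $z$ via $\tilde d(x,z)+\tilde d(z,y)\le \tilde d(x,y)+\vep$ for the upper bound, and the combination of $\int_0^T f\ge \tilde d(x,z)\inf_{\Oba}f$ with the $\inf_{\Oba}f$-Lipschitz bound on $\zeta$ and the triangle inequality for the lower bound. The only cosmetic differences are that you bound every competitor from below instead of picking an $\vep$-optimal curve (equivalent, since $u$ is an infimum), and your sign case-split is avoidable by keeping the constant $m=\inf_{\Oba}f$ throughout, as the paper does, before relaxing $-m\tilde d(x,y)\ge -\sup_{\Oba}f\,\tilde d(x,y)$.
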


\begin{proof}
For simplicity of notation, denote
\[
m:=\inf_{\Oba} f, \quad M:=\sup_{\Oba} f. 
\]
Fix $x\in \Omega$ and $y\in \pO$. Then for any $\vep>0$, there exists an arc-length parametrized curve 
$\xi\in \A_{x}(\R, \X)$ such that $\xi(t)=y$ and
\[
\tilde{d}(x, y)\leq t\leq \tilde{d}(x, y)+\vep.
\]
This curve may not stay in $\Omega$, but there exists $z=\xi(t_1)\in \pO$, where  
\[
t_1:=T^+_\Omega[\xi]=\inf\{s: \xi(s)\in \pO\}. 
\]
Since we have
\[
t-t_1\geq \tilde{d}(y, z) \ \text{ and }\  t\leq \tilde{d}(x, z)+\tilde{d}(y, z)+\vep, 
\]
it follows that $t_1\leq \tilde{d}(x, z)+\vep$ and therefore
\beq\label{bdry regu1}
\tilde{d}(x, z)+\tilde{d}(z,y)\leq t \leq \tilde{d}(x, y)+\vep. 
\eeq
Now in view of \eqref{eq optimal control}, we have
\beq\label{bdry regu rev1}
u(x)\leq \zeta(z)+\int_0^{t_1} f(\xi(s))\, ds\leq \zeta(z)+M(\tilde{d}(x, z)+\vep).
\eeq
Thanks to the $L$-Lipschitz continuity of $\zeta$, we have  
\beq\label{use later1}
\zeta(z)\leq \zeta(y)+L\tilde{d}(y, z). 
\eeq

Applying \eqref{bdry regu1} and \eqref{use later1} in \eqref{bdry regu rev1}, we thus get
\beq\label{use later2}
u(x)\leq \zeta(y)+L \tilde{d}(y, z)+M\tilde{d}(x, z)+M\vep\leq \zeta(y)+\max\{L, M\}\tilde{d}(x, y)+2\max\{L, M\}\vep.
\eeq
Since the above holds for all $\vep>0$,  we have \eqref{sol bdry regularity weak} immediately. 

In order to show \eqref{sol bdry regularity}, we need the stronger condition \eqref{bdry regularity}, which 
means that $\zeta$ is $m$-Lipschitz on $\partial\Omega$ with respect to $\tilde{d}$.
By \eqref{eq optimal control}, for any $\vep>0$, there exists $y_\vep\in \pO$ and 
a curve $\xi_\vep\in \A_x(\R, \Oba)$ such that with $t_\vep>0$ chosen so that $\xi_\vep(t_\vep)=y_\vep$, we have
\[
u(x)\geq \zeta(y_\vep)+\int_0^{t_\vep} f(\xi(s))\, ds-\vep\geq \zeta(y_\vep)+m \tilde{d}(x, y_\vep)-\vep.
\]
Using \eqref{bdry regularity},  we have 
\[
u(x)\geq \zeta(y)-m\tilde{d}(y, y_\vep)+m\tilde{d}(x, y_\vep)-\vep\geq \zeta(y)-m\tilde{d}(x, y)-\vep. 
\]
Letting $\vep\to 0$, we obtain 
\[
u(x)-\zeta(y)\geq -m\tilde{d}(x, y)\geq -M\tilde{d}(x, y),
\]
which, combined with \eqref{sol bdry regularity weak} with $L=m$, completes the proof. 
\end{proof}

The condition \eqref{bdry regularity} gives a quite restrictive constraint on the oscillation of the boundary data 
$\zeta$. A weaker condition than \eqref{bdry regularity} is that
\beq\label{bdry regularity2}
\left\{\begin{aligned}
&\zeta(x)-\zeta(y)\leq \int_0^t f(\xi(s))|\xi'|(s)\, ds\\
&\text{for every $x, y\in \pO$ and every rectifiable curve $\xi: [0, t]\to \Oba$ with $\xi(0)=x$ and $\xi(t)=y$.}
\end{aligned}
\right.
\eeq
This condition is employed in \cite{GHN} to guarantee the existence of continuous solutions to the 
Dirichlet  problem. We can use \eqref{bdry regularity2} instead of \eqref{bdry regularity} to obtain a 
continuity result weaker than \eqref{sol bdry regularity} provided that $\Omega$ 
enjoys a better regularity like the so-called quasiconvexity.

\begin{prop}[Boundary consistency under domain quasiconvexity]\label{prop bdry regularity2}
Let $(\X, d)$ be a complete rectifiably connected metric space and $\tilde{d}$ be given by \eqref{int metric}.
We suppose that \eqref{eq ast} holds. Let $\Omega\subsetneq \X$ be an open set. Assume that $f\in C(\Oba)$ 
is bounded and  $f\geq 0$ in $\Oba$. 
Assume in addition that $\Oba$ is $\sigma_\Omega$--convex in $(\X, \tilde{d})$ with respect to a modulus of continuity 
$\sigma_\Omega$, i.e., for any $x, y\in \Oba$, there exist a rectifiable curve $\xi$ in $\Oba$ joining $x$ and $y$ and satisfying 
$\ell(\xi)\leq \sigma_\Omega(\tilde{d}(x, y))$.
Let $\zeta$ satisfy \eqref{bdry regularity2} and $u$ be given by \eqref{eq optimal control}. 
Then \eqref{new bdry regularity} holds with $\sigma(t)=2\sigma_{\Omega}(t)\sup_{\Oba} f$ for $t\geq 0$. 
\end{prop}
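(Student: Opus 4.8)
My plan is to follow the structure of the proof of Proposition~\ref{prop bdry regularity}, splitting the claim into the two one-sided estimates $u(x)-\zeta(y)\le\sigma(\tilde{d}(x,y))$ and $\zeta(y)-u(x)\le\sigma(\tilde{d}(x,y))$, but replacing each appeal to a Lipschitz bound on $\zeta$ by a single application of the line-integral condition~\eqref{bdry regularity2} along a curve assembled from a $\sigma_\Omega$-convex arc and a control path. Write $M:=\sup_{\Oba}f<\infty$. I will use repeatedly that the competitors $\xi\in C_x$ are $1$-Lipschitz, so $|\xi'|\le 1$ a.e.\ and therefore $\int_0^Tf(\xi(s))|\xi'|(s)\,ds\le\int_0^Tf(\xi(s))\,ds$, the latter being the quantity appearing in~\eqref{eq optimal control}.

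For the upper bound I would fix $x\in\Omega$, $y\in\pO$ and, by $\sigma_\Omega$-convexity of $\Oba$, pick a rectifiable curve $\gamma$ in $\Oba$ from $x$ to $y$ with $\ell(\gamma)\le\sigma_\Omega(\tilde{d}(x,y))$, parametrized by arc length on $[0,L]$ with $L=\ell(\gamma)$ and extended constantly so that $\gamma\in C_x$. Set $t_1:=T^+_\Omega[\gamma]\in(0,L]$ and $z:=\gamma(t_1)\in\pO$. Feeding $\gamma$ into~\eqref{eq optimal control} gives $u(x)\le\int_0^{t_1}f(\gamma(s))\,ds+\zeta(z)\le M\,t_1+\zeta(z)\le M\sigma_\Omega(\tilde{d}(x,y))+\zeta(z)$, while applying~\eqref{bdry regularity2} to $\gamma|_{[t_1,L]}$, an arc in $\Oba$ joining $z$ to $y$, gives $\zeta(z)-\zeta(y)\le\int_{t_1}^Lf(\gamma(s))\,ds\le M(L-t_1)\le M\sigma_\Omega(\tilde{d}(x,y))$. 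Adding yields $u(x)-\zeta(y)\le 2M\sigma_\Omega(\tilde{d}(x,y))=\sigma(\tilde{d}(x,y))$.

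For the lower bound I would take an arbitrary competitor $\xi\in C_x$, put $T:=T^+_\Omega[\xi]<\infty$, $z:=\xi(T)\in\pO$, and use $\sigma_\Omega$-convexity once more to choose a rectifiable curve $\eta$ in $\Oba$ from $y$ to $x$ with $\ell(\eta)\le\sigma_\Omega(\tilde{d}(x,y))$. The concatenation of $\eta$ followed by $\xi|_{[0,T]}$ is a rectifiable curve in $\Oba$ from $y\in\pO$ to $z\in\pO$, so~\eqref{bdry regularity2} together with additivity of the line integral over the two pieces gives $\zeta(y)-\zeta(z)\le M\ell(\eta)+\int_0^Tf(\xi(s))|\xi'|(s)\,ds\le M\sigma_\Omega(\tilde{d}(x,y))+\int_0^Tf(\xi(s))\,ds$. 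Hence $\int_0^Tf(\xi(s))\,ds+\zeta(z)\ge\zeta(y)-M\sigma_\Omega(\tilde{d}(x,y))$; taking the infimum over $\xi\in C_x$ in~\eqref{eq optimal control} yields $u(x)\ge\zeta(y)-M\sigma_\Omega(\tilde{d}(x,y))\ge\zeta(y)-\sigma(\tilde{d}(x,y))$. The two estimates together give $|u(x)-\zeta(y)|\le\sigma(\tilde{d}(x,y))$, which is~\eqref{new bdry regularity}.

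The steps needing care are bookkeeping ones already handled in~\cite{GHN}: that a competitor issuing from $x\in\Omega$ has a first exit time from $\Omega$ which is positive, finite, and attained at a point of $\pO$; that the concatenation of two rectifiable curves is again rectifiable with the $f$-weighted line integral adding over the pieces; and that the $ds$-integral in~\eqref{eq optimal control} controls the intrinsic line integral $\int f|\xi'|\,ds$ of~\eqref{bdry regularity2}. I do not expect a genuine obstacle: the one structurally new point beyond Proposition~\ref{prop bdry regularity} is that the entire metric geometry of $\Oba$ is absorbed into the single connecting curve fed to~\eqref{bdry regularity2}, after which the estimate closes immediately, and the constant $2$ in $\sigma=2\sigma_\Omega\sup_{\Oba}f$ emerges for free from splitting that curve at the exit point $z$.
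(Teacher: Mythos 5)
Your proof is correct, but it organizes the estimate a bit differently from the paper. For the upper bound the paper recycles the computation of Proposition~\ref{prop bdry regularity}(1): it takes a curve in $\X$ (not necessarily in $\Oba$) nearly realizing $\tilde{d}(x,y)$, stops it at the first exit point $z\in\pO$, and then invokes the boundary oscillation estimate $|\zeta(y_1)-\zeta(y_2)|\le M\sigma_\Omega(\tilde d(y_1,y_2))$, which is itself derived from quasiconvexity plus \eqref{bdry regularity2}. You instead feed the quasiconvexity curve from $x$ to $y$ directly into \eqref{eq optimal control} as a competitor, split it at its exit time, and apply \eqref{bdry regularity2} to the remaining arc from $z$ to $y$; this stays inside $\Oba$ throughout, avoids the intermediate oscillation estimate and the $\vep$-perturbation of $\tilde d(x,y)$ inside $\sigma_\Omega$, and in fact yields the sharper bound $u(x)-\zeta(y)\le M\,\ell(\gamma)\le M\sigma_\Omega(\tilde d(x,y))$. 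For the lower bound both arguments rest on the same idea — concatenate a control curve from $x$ with the quasiconvexity curve joining $x$ and $y$ and apply \eqref{bdry regularity2} to the resulting boundary-to-boundary curve — but the paper runs it through an $\vep$-optimal curve and lets $\vep\to0$, whereas you estimate every competitor $\xi\in C_x$ and then take the infimum defining $u(x)$, which removes the $\vep$-bookkeeping. The supporting facts you flag (positivity and attainment of the exit time at a point of $\pO$, additivity and reparametrization invariance of $\int f|\xi'|\,ds$, and $|\xi'|\le1$ for admissible curves) are exactly the ones the paper also uses implicitly, so there is no gap; your constant $2$ is even slightly generous, since both of your one-sided bounds are in fact at most $M\sigma_\Omega(\tilde d(x,y))$.
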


\begin{proof}
Let us still take $M=\sup_{\Oba} f$ for simplicity of notation.  Fix $x\in \Omega$ and $y\in \pO$.  
Using the same argument as in the proof of Proposition \ref{prop bdry regularity}, we can easily prove that
\[
u(x)- \zeta(y)\leq 2M\sigma_\Omega(\tilde{d}(x, y)).
\]
Indeed, since the quasiconvexity of $\Omega$ and \eqref{bdry regularity2} yield 
\[
|\zeta(y_1)-\zeta(y_2)|\leq M\sigma_\Omega(\tilde{d}(y_1, y_2))
\]
for any $y_1, y_2\in \pO$, we only need to respectively substitute the terms $m\tilde{d}(y, z)$ and 
$m\tilde{d}(x, y)$ in \eqref{use later1} and \eqref{use later2}  with $M\sigma_\Omega(\tilde{d}(y, z))$ 
and $M\sigma_\Omega(\tilde{d}(x, y))$ . 

Let us now show that 
\beq\label{bdry reg2}
u(x)- \zeta(y)\geq -\sigma(\tilde{d}(x, y)).
\eeq
In fact, for any $\vep>0$, we can use \eqref{eq optimal control} again to find $y_\vep\in \pO$ and an 
arc-length parametrized curve $\xi_1\in \mathcal{A}_x([0, t_1))$ with $t_1>0$ such that 
$\tilde{d}(x, y_\vep)\geq t_1-\vep$ and
\beq\label{bdry reg1}
u(x)\geq \zeta(y_\vep)+\int_0^{t_1} f(\xi_1(s))\, ds-\vep.
\eeq
Note that there exists another arc-length parametrized curve $\xi_2\in \mathcal{A}_x([0, t_2))$ 
such that $\xi_2(0)=x$, $\xi_2(t_2)=y$ and $\sigma_\Omega(\tilde{d}(x, y))\geq t_2$. 
We thus can join $\xi_1$ and $\xi_2$ by taking 
\[
\xi(s)=\begin{cases}
\xi_1(t_1-s) &\text{if $s\in [0, t_1]$,}\\
\xi_2(s-t_1) &\text{if $s\in (t_1, t_1+t_2]$}.
\end{cases}
\]
Adopting \eqref{bdry regularity}, we have 
\[
\zeta(y)\leq \zeta(y_\vep)+\int_0^{t_1+t_2} f(\xi(s))\, ds,
\]
which, combined with \eqref{bdry reg1}, implies that 
\[
\begin{aligned}
u(x)&\geq \zeta(y)+\int_0^{t_1} f(\xi_1(s))\, ds-\int_0^{t_1+t_2} f(\xi(s))\, ds-\vep=\zeta(y)-\int_0^{t_2} f(\xi_2(s))\, ds-\vep\\
&\geq \zeta(y)-Mt_2-\vep\geq \zeta(y)-M\sigma_\Omega(\tilde{d}(x, y))-\vep. 
\end{aligned}
\]
We conclude the proof of \eqref{bdry reg2} by letting $\vep\to 0$. 
\end{proof}

\subsection{Slope-based solutions in general metric spaces}\label{sec:generalization}

Motivated by our above results, we generalize the definition of viscosity solutions proposed in \cite{GaS}. 
In particular, by utilizing the induced intrinsic metric $\tilde{d}$ given by \eqref{int metric}, we 
can now study the general equation \eqref{stationary eq} in a general metric space $(\X, d)$ 
that is not necessarily a length space but a complete rectifiably connected metric space satisfying \eqref{eq ast}. 

We first extend the notion of pointwise local slope for any locally Lipschitz function $u$ by continuing 
to use the notation $|\nabla u|$:
\beq\label{relaxed least slope}
|\nabla u|(x):=\limsup_{y\to x} {|u(y)-u(x)|\over \tilde{d}(x, y)},\quad \text{for any $x\in \X$.}
\eeq
Such an idea has already been formally stated in \cite[Equation (2.3)]{GHN}. 
If $(X, d)$ is a length space, then $\tilde{d}=d$ and \eqref{relaxed least slope} agrees with \eqref{slope}. 

Analogously, if $(\X, d)$ is a complete rectifiably connected metric space satisfying \eqref{eq ast},  
we can define upper and lower slopes by taking, for $u\in \lipl(\X)$,
\beq\label{half least slope}
|\nabla^{\pm} u|(x):=\limsup_{y\to x} {[u(y)-u(x)]_{\pm}\over \tilde{d}(x, y)},\quad \text{for any $x\in \X$.}
\eeq
These are again consistent with \eqref{semi slope} if $(X, d)$ is a length space. 

By using the definition in \eqref{test class},  we can provide the test classes $\ol{\mathcal{C}}(\Omega)$ 
and $\ul{\mathcal{C}}(\Omega)$ but on an open set $\Omega$ of a more general metric space.

As a result, $u\in \ol{\mathcal{C}}(\Omega)$ (resp., $u\in\ul{\mathcal{C}}(\Omega))$ if and only if for any $x\in \Omega$,
\[
\limsup_{y\to x}{u(y)-u(x)\over \tilde{d}(x, y)}
= |\nabla u|(x) \quad \left(\text{resp., } \limsup_{y\to x}{u(x)-u(y)\over \tilde{d}(x, y)}= |\nabla u|(x) \right).
\]
Now Definition \ref{defi s} can be used to define viscosity solutions of \eqref{stationary eq} in a complete 
rectifiably connected metric space as long as we replace $d$ by $\tilde{d}$.

We conclude this section by remarking that our approach above is based on a pointwise version 
of the notion of upper gradients, which, as an important substitute of the Euclidean gradients, 
has recently attracted a great deal of attention in the study of Sobolev spaces in metric measure 
spaces; see for instance \cite{Haj1, HKSTBook} for introduction on this topic.

\section{Monge Solutions and Local Equivalence}\label{sec:monge}

In this section we aim to show the equivalence of c- and s-solutions of the eikonal 
equation without relying on the boundary condition. Our discussion involves a third 
notion of solutions to  Hamilton-Jacobi equations in general metric spaces, which 
generalizes the so-called Monge solution of the eikonal equation in the Euclidean 
space studied in \cite{NeSu, BrDa} etc.

Thanks to our remarks in Section~\ref{sec:generalization}, it is sufficient to set up the 
problem in a complete length space $(\X, d)$. Our results in this section can be applied to  
more general rectifiably connected spaces by taking the induced intrinsic metric 
$\tilde{d}$ as in \eqref{int metric}. Throughout this section, we shall always focus our 
attention on the complete length space $(\X, d)$.

\subsection{Definition and uniqueness of Monge solutions}

Let us begin with the definition of Monge solutions  for the general Hamilton-Jacobi equation
in a complete length space. 
 
\begin{defi}[Definition of Monge solutions]\label{defi monge}
A function $u\in \lipl(\Omega)$ is called a {Monge subsolution} (resp., {Monge supersolution}) 
of \eqref{stationary eq} if, at any $x\in \Omega$, 
\[
H\left(x, u(x), |\nabla^- u|(x)\right)\leq 0 \quad \left(\text{resp.}, H\left(x, u(x), |\nabla^- u|(x)\right)\geq 0\right).
\]
A function $u\in \lipl(\Omega)$ is said to be a Monge solution if $u$ is both a Monge 
subsolution and a Monge supersolution, i.e., $u$ satisfies
\beq\label{stationary monge}
H\left(x, u(x), |\nabla^- u|(x)\right)=0
\eeq
at any $x\in \Omega$.  
\end{defi}

In the case of \eqref{eikonal eq}, the definition of Monge subsolutions (resp.,  supersolutions) reduces to 
\begin{equation}\label{eq:Monge-supsubsol-Eik}
|\nabla^- u|(x)\leq f(x)\quad (\text{resp., } |\nabla^- u|(x)\geq f(x))
\end{equation}
for all $x\in \Omega$.  Then $u$ is a Monge solution of \eqref{eikonal eq} if 
\begin{equation}\label{eq:Monge-sol-Eik}
|\nabla^- u|= f\quad \text{in $\Omega$}. 
\end{equation}

The notion of Monge solutions of \eqref{eikonal eq} in the Euclidean space is studied in \cite{NeSu}, where the 
right hand side $f$ is allowed to be more generally lower semicontinuous in $\overline{\Omega}$. 
Such a notion, still in the Euclidean space, was later generalized in \cite{BrDa} to handle general Hamilton-Jacobi 
equations with discontinuities. The definitions of Monge solutions in \cite{NeSu, BrDa} require the optical length 
function, which can be regarded as a Lagrangian structure for $H$.
In contrast, our definition of Monge solutions does not rely on optical length functions.
We only consider continuous $H$ in this work and the discontinuous case will be discussed in 
our forthcoming paper \cite{CLShZ}.

One advantage of using the Monge solutions is that its uniqueness can be easily obtained. In what follows, 
we give a comparison principle for Monge solutions of the eikonal equation. 

\begin{thm}[Comparison principle for Monge solutions of eikonal equation]\label{thm comparison monge}
Let $(\X, d)$ be a complete length space and $\Omega\subsetneq \X$ be a bounded open set in $(\X, d)$.  Assume that 
$f\in C(\Omega)$ is bounded and satisfies $\inf_{\Omega}f>0$.  
Let $u \in C(\Oba)\cap \lipl(\Omega)$ be a bounded Monge subsolution and $v \in C(\Oba)\cap \lipl(\Omega)$ be 
a bounded Monge supersolution of \eqref{eikonal eq}. If 
\beq\label{bdry verify monge}
\lim_{\delta\to 0}\sup\left\{u(x)-v(x): x\in \Oba, \ d(x, \pO)\leq \delta \right\}\leq 0, 
\eeq
then $u\le v$ in $\Oba$. Here $d(x, \pO)$ is given by $\inf_{y\in \pO} d(x, y)$.
\end{thm}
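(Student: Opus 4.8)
The plan is to prove the comparison principle via a standard doubling-type argument adapted to the length-space setting, exploiting that $|\nabla^- u|$ controls the decay of $u$ along curves emanating from any point. First I would fix $\lambda \in (0,1)$ and consider $u_\lambda := \lambda u$, noting that since $u$ is a Monge subsolution, $|\nabla^- u_\lambda| = \lambda |\nabla^- u| \le \lambda f < f$ at every point of $\Omega$; this strict inequality is what makes the argument work and the final result follows by letting $\lambda \to 1$. The key geometric fact I would establish is that a bounded Monge subsolution $u$ with $|\nabla^- u| \le g$ pointwise (for $g$ continuous, here $g = \lambda f$) satisfies the integrated inequality $u(x) - u(y) \le \int_0^{\ell(\xi)} g(\xi(s))\, ds$ along any arc-length-parametrized rectifiable curve $\xi$ from $y$ to $x$ staying in $\Omega$; this is proved by observing that $t \mapsto u(\xi(t))$ is locally Lipschitz, hence absolutely continuous, and its a.e.\ derivative is bounded above by $|\nabla^- u|(\xi(t)) \cdot |\xi'|(t) \le g(\xi(t))$ wherever it is positive, so one integrates. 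Since $(\X,d)$ is a length space, for any $x \in \Omega$ one can find such curves realizing the intrinsic distance up to $\varepsilon$.

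Next I would use the Monge supersolution property of $v$ in the form it takes for the eikonal equation, $|\nabla^- v|(x) \ge f(x)$ for every $x \in \Omega$. Here the argument is pointwise: suppose for contradiction that $\sup_{\Oba}(u_\lambda - v) =: m > 0$; by the boundary condition \eqref{bdry verify monge} and $\lambda<1$ together with boundedness of $u$, the supremum is attained (or approached) at a point $x_0$ in the interior $\Omega$ bounded away from $\pO$ — I would be careful here that $\Oba$ need not be compact, so I would work with a maximizing sequence $x_j$, pass to the inequality, and use that the ``defect'' stays uniformly interior. At such a point (or along the maximizing sequence) one has, for every $y$ near $x_0$, $u_\lambda(x_0) - u_\lambda(y) \ge v(x_0) - v(y) - o(d(x_0,y))$, and dividing by $d(x_0,y)$ and taking $\limsup$ over $y \to x_0$ in the direction realizing $|\nabla^- v|$ gives $|\nabla^- u_\lambda|(x_0) \ge |\nabla^- v|(x_0) \ge f(x_0)$, contradicting $|\nabla^- u_\lambda|(x_0) \le \lambda f(x_0) < f(x_0)$. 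This is the cleaner route and avoids curve integration for $v$ entirely; the curve-integration lemma for $u$ is then only needed if one prefers a ``propagate from the boundary'' argument, but the pointwise contradiction at an interior max is more economical.

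The main obstacle I anticipate is the lack of compactness of $\Oba$: $(\X,d)$ is merely a complete length space, not assumed proper, so a maximizing sequence for $u_\lambda - v$ need not converge, and one cannot directly pick a maximum point $x_0$. To handle this I would either (i) invoke Ekeland's variational principle in the complete metric space $(\Oba, d)$ applied to the continuous bounded function $-(u_\lambda - v)$, obtaining for each $\eta>0$ a point $x_\eta$ where $u_\lambda - v$ is ``almost maximal'' and which is a strict max of a slightly perturbed function $u_\lambda(x) - v(x) - \eta\, d(x, x_\eta)$, so that the perturbation contributes an extra $\eta$ to the sub-slope estimate — then $\lambda f(x_\eta) + \eta \ge |\nabla^- u_\lambda|(x_\eta) \ge |\nabla^- v|(x_\eta) \ge f(x_\eta)$ forces $\eta \ge (1-\lambda) f(x_\eta) \ge (1-\lambda)\inf_\Omega f > 0$, a contradiction once $\eta$ is chosen small; or (ii) perturb with a term like $-\eta\, d(x, z_0)^2$ using that $d(\cdot,z_0)^2 \in \underline{\mathcal{C}}(\Omega)$ to localize the maximum. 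I would also need to check that the boundary hypothesis \eqref{bdry verify monge} combined with $\lambda < 1$ genuinely pushes the near-maximizers into a compact-distance-interior region of $\Omega$: since $u$ is bounded, $u_\lambda - v \le \lambda \sup u - v$ near $\pO$ while $u_\lambda - v = u - v - (1-\lambda)u$, and \eqref{bdry verify monge} gives $\limsup$ of $u-v$ near $\pO$ is $\le 0$, so $\limsup (u_\lambda - v)$ near $\pO$ is $\le -(1-\lambda)\inf u$, which may be positive if $u$ is negative — so I would first normalize by adding a constant to assume $u \ge 0$ on $\Oba$ (legitimate since the equation and the notion of Monge solution are translation-invariant in $u$ when $H$ does not depend on $u$, which is the case for \eqref{eikonal eq}), ensuring the near-boundary values of $u_\lambda - v$ are genuinely $\le 0$ in the limit. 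With this normalization the contradiction closes and letting $\lambda \to 1$ yields $u \le v$ on $\Oba$.
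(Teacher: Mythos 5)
Your proposal is correct and follows essentially the same route as the paper's proof: normalize so that $u\ge 0$, compare $\lambda u$ with $v$ for $\lambda\in(0,1)$, use the boundary hypothesis to push near-maximizers of $\lambda u - v$ into the interior, apply Ekeland's variational principle in place of compactness, and derive the pointwise contradiction $f(x_\eta)\le \lambda f(x_\eta)+\eta$ from the sub-slope inequalities, with $\eta<(1-\lambda)\inf_\Omega f$. The only cosmetic slip is that in your displayed chain the extra $\eta$ should be attached to the sub-slope of the Ekeland-perturbed function (i.e.\ $|\nabla^-(\lambda u)|(x_\eta)+\eta\ge|\nabla^- v|(x_\eta)\ge f(x_\eta)$), exactly as your accompanying sentence indicates, so the argument closes as intended.
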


\begin{proof}
Since $u$ and $v$ are bounded, we may assume that $u, v\geq 0$ by adding a 
positive constant to them. It suffices to show that $\lambda u\le v$ in $\Omega$ for all $\lambda\in (0,1)$. 
Assume by contradiction that 
there exists $\lambda\in (0,1)$ such that $\sup_{\Omega}(\lambda u-v)> 2\mu$ for some $\mu>0$.  
By \eqref{bdry verify monge}, we may take $\delta>0$ small such that 
\[
\lambda u(x)-v(x)\leq u(x)-v(x)\leq \mu
\]
for all $x\in \Oba\setminus \Omega_\delta$, where we denote $\Omega_r=\{x:\Omega: d(x, \pO)>r\}$ for $r>0$. 
We choose $\vep\in (0, \delta/2)$ such that
\[
\sup_\Omega (\lambda u-v)>2\mu+\vep^2
\]
and
\beq\label{eps small}
\vep<(1-\lambda) \inf_{\Omega_{\delta/2}} f.
\eeq
We have such an $\vep>0$ because $\inf_{\Omega}f>0$.
Thus there exists $x_0\in \Omega$ such that $\lambda u(x_0)-v(x_0)\geq \sup_\Omega (\lambda u-v)-\vep^2>2\mu$ 
and therefore $x_0\in \Omega_\delta$. 

By Ekeland's variational principle (cf. \cite[Theorem 1.1]{Ek1}, \cite[Theorem 1]{Ek2}), there exists 
$x_\vep\in B_\vep(x_0)\subset \Omega_{\delta/2}$ such that 
\[
\lambda u(x_\vep)-v(x_\vep)\geq \lambda u(x_0)-v(x_0) 
\]
and $x\mapsto \lambda u(x)-v(x)-\vep d(x_\vep, x)$ attains a local maximum in $\Omega$ at $x=x_\vep$.
It follows that
\beq\label{comparison monge1}
v(x_\vep)-v(x)\le \lambda u(x_\vep)-\lambda u(x)+\vep d(x_\vep, x)
\eeq
for all $x\in B_{r}(x_\vep)$ when $r>0$ is small enough. 
Since $v$ is a Monge supersolution of \eqref{eikonal eq} and hence
satisfies~\eqref{eq:Monge-supsubsol-Eik}, there exists 
a sequence $\{y_n\}\subset \Omega$ such that 
\[
\lim_{y_n\to x_\vep}\frac{[v(y_n)-v(x_\vep)]_-}{d(x_\vep, y_n)}\geq f(x_\vep)>0.
\]
Note that here it is crucial to have $f(x_\vep)>0$ so that for large integers $n$ we have
$v(y_n)<v(x_\vep)$. 
Hence, by \eqref{comparison monge1} we have
\[
\begin{aligned}
f(x_\vep)& \leq \lim_{y_n\to x_\vep}\frac{\lambda [u(y_n)-u(x_\vep)]_-}{d(y_n, x_\vep)}+\vep\\
&\leq \limsup_{x\to x_\vep}\frac{\lambda [u(x)-u(x_\vep)]_-}{d(x, x_\vep)}+\vep\\
&=\lambda |\nabla^- u|(x_\vep)+\vep
\end{aligned}
\]
Using the fact that $u$ is a Monge subsolution, we get 
\[
f(x_\vep)\leq \lambda f(x_\vep)+\vep,
\]
which contradicts the choice of $\vep>0$ as in \eqref{eps small}. Our proof is thus complete.
\end{proof}

In the above theorem we cannot replace $|\nabla^-u|$ with $|\nabla u|$. 
A simple counterexample with $|\nabla u|$ is as follows: 
in $[-1,1]\subset\R$, both $1-|x|$ and $|x|-1$ would be two different solutions of \eqref{eikonal eq} with $f\equiv 1$ that satisfy the  
boundary condition $u(\pm 1)=0$, but only the former is the unique Monge solution.

\begin{rmk}\label{rmk uniqueness discontinuous}
It can be easily seen that the continuity assumption on $f$ is not utilized at all  in the proof above. Hence the 
comparison principle in Theorem \ref{thm comparison monge} still holds even if we drop the continuity 
assumption for $f$. In \cite{CLShZ} we study in detail Monge solutions for discontinuous 
Hamiltonians in general metrics measure spaces. 
\end{rmk}

\begin{rmk}
An analogous comparison principle can be established for locally Lipschitz Monge solutions 
of \eqref{stationary eq} provided that $p\mapsto H(x, \rho, p)$ is continuous in $[0, \infty)$ 
uniformly for all $(x, \rho)\in \Omega\times \R$ and $\rho\mapsto H(x, \rho, p)$ is strictly increasing in the sense that
 there exists $\mu>0$ such that 
\[
\rho\mapsto H(x, \rho, p)-\mu \rho
\]
is nondecreasing for all $x\in \Omega$ and $p\geq 0$. The proof is similar to that of Theorem \ref{thm comparison monge}.
\end{rmk} 

Concerning the existence of Monge solutions, Theorem \ref{thm equiv Monge} shows that any c-solution 
of \eqref{eikonal eq} is a Monge solution. In particular, the formula \eqref{eq optimal control} provides a 
unique c- and Monge solution of \eqref{eikonal eq} and \eqref{bdry cond} if \eqref{bdry regularity} holds.

\subsection{Local equivalence of solutions of eikonal equations}

We study the local relation between Monge solutions, c-solutions and s-solutions. 
Let us first discuss the subsolution properties.

\begin{prop}[Relation between c- and Monge subsolutions]\label{prop eikonal sub1}
Let $(\X, d)$ be a complete length space and $\Omega$ be an open set in $\X$. Assume 
that $f\in C(\Omega)$ with $f\geq 0$.  Let $u\in C(\Omega)$. Then 
$u$ is a c-subsolution of \eqref{eikonal eq} if and only if it is a Monge subsolution of \eqref{eikonal eq}. 
\end{prop}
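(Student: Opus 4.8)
The plan is to prove the equivalence through the characterization of c-subsolutions in Proposition~\ref{prop c-sub}, which says that $u$ is a c-subsolution of \eqref{eikonal eq} if and only if the integral inequality $u(\xi(t_1))\leq \int_{t_1}^{t_2}f(\xi(r))\,dr + u(\xi(t_2))$ holds along every admissible curve $\xi$. Since we are in a length space, $d=\tilde d$, so there is no metric-change bookkeeping to worry about here; the slopes appearing in the Monge condition are taken with respect to $d$ itself. First I would establish that any c-subsolution is locally Lipschitz: this is essentially Lemma~\ref{lem c-lip} applied with $\tilde d = d$, so the sub-slope $|\nabla^- u|$ is well defined and finite at every point.

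For the implication ``c-subsolution $\Rightarrow$ Monge subsolution'', fix $x\in\Omega$ and take a sequence $y_n\to x$ realizing $|\nabla^- u|(x) = \lim_n [u(x)-u(y_n)]_+/d(x,y_n)$. Because $(\X,d)$ is a length space, for each $n$ I can choose a nearly-length-minimizing arc-length parametrized curve $\xi_n$ from $x$ to $y_n$ whose length $t_n$ satisfies $t_n \leq d(x,y_n)(1+\varepsilon_n)$ with $\varepsilon_n\to 0$, and this curve stays in a small ball around $x$ on which $f$ is nearly constant equal to $f(x)$. Applying Proposition~\ref{prop c-sub} along $\xi_n$ with $t_1=0$, $t_2=t_n$ gives $u(x)-u(y_n)\leq \int_0^{t_n} f(\xi_n(s))\,ds \leq t_n (f(x)+o(1))$; dividing by $d(x,y_n)$ and letting $n\to\infty$ yields $|\nabla^- u|(x)\leq f(x)$, i.e., \eqref{eq:Monge-supsubsol-Eik}. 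This is the same argument already used in the proof of Proposition~\ref{prop sub}, specialized to the length-space case, so it should go through cleanly.

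For the converse ``Monge subsolution $\Rightarrow$ c-subsolution'', I would again use Proposition~\ref{prop c-sub} and show the integral inequality along an arbitrary admissible curve $\xi$. The natural approach is to fix $\xi\in\A(\R,\Omega)$ and $t_1<t_2$, set $g(t):=u(\xi(t)) - \int_{t_1}^t f(\xi(r))\,dr$, and show $g$ is nonincreasing on $[t_1,t_2]$. Since $g$ is locally Lipschitz in $t$ (as $u$ is locally Lipschitz and $\xi$ is $1$-Lipschitz with $f$ continuous), it is differentiable a.e., and it suffices to show $g'(t)\leq 0$ at a.e.\ $t$. At a point $t$ where $g'(t)$ exists and $|\xi'|(t)$ exists, one computes, using $[u(\xi(t))-u(\xi(t+h))]_+/d(\xi(t),\xi(t+h))\leq |\nabla^- u|(\xi(t))\,(1+o(1))\leq f(\xi(t))(1+o(1))$ together with $d(\xi(t),\xi(t+h))\leq |h|\,|\xi'|(t)(1+o(1))\leq |h|(1+o(1))$, that the derivative of $t\mapsto u(\xi(t))$ from below is controlled: more precisely $\limsup_{h\to 0^+}(u(\xi(t))-u(\xi(t+h)))/h \leq f(\xi(t))$, hence $u(\xi(\cdot))$ has a.e.\ derivative bounded above by $f(\xi(t))|\xi'|(t)\leq f(\xi(t))$, which gives $g'(t)\leq 0$. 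Integrating over $[t_1,t_2]$ closes the argument.

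\textbf{Main obstacle.} The delicate point is the converse direction, specifically passing from the pointwise sub-slope bound $|\nabla^- u|\leq f$ to the differential inequality along $\xi$ at a.e.\ $t$. One has to be careful that the $\limsup$ defining $|\nabla^- u|(\xi(t))$ is over \emph{all} $y\to \xi(t)$, not just points of the form $\xi(t+h)$, so the bound transfers, but the comparison of $d(\xi(t),\xi(t+h))$ with $|h|$ relies on the a.e.\ existence of the metric speed $|\xi'|(t)$ and the fact that $|\xi'|\le 1$; one should also check that the set where $g'(t)$ exists, $|\xi'|(t)$ exists, and the incremental quotient of $u\circ\xi$ behaves well has full measure. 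An alternative, perhaps cleaner, route for the converse is to invoke a known equivalence between the Monge/sub-slope formulation and the optimal-control integral inequality directly (as in \cite{NeSu} in the Euclidean setting), adapted to length spaces; but carrying out the a.e.\ differentiation argument sketched above is elementary and self-contained, so I would prefer it. Everything else — local Lipschitz continuity, the choice of nearly-geodesic curves, the continuity of $f$ — is routine in a length space.
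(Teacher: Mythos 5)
Your forward implication coincides with the paper's: local Lipschitz continuity via Lemma~\ref{lem c-lip} (with $\tilde d=d$ in a length space), then Proposition~\ref{prop c-sub} applied along nearly length-minimizing curves contained in a small ball around $x$, which gives $|\nabla^- u|(x)\le f(x)$ after dividing by $d(x,y_n)$ and passing to the limit. For the converse you take a genuinely different route. The paper never returns to the integral characterization: it verifies Definition~\ref{defi c} directly. Given $\xi\in\A_{x_0}(\R,\Omega)$ and $\phi\in C^1(\R)$ such that $u\circ\xi-\phi$ has a local maximum at $t=0$, if $\phi'(0)<0$ then $u(\xi(0))-u(\xi(t))\ge \phi(0)-\phi(t)>0$ for small $t>0$, and since $d(\xi(0),\xi(t))\le t$ one gets $|\phi'(0)|\le\limsup_{t\to 0+}\bigl(u(\xi(0))-u(\xi(t))\bigr)/t\le |\nabla^- u|(x_0)\le f(x_0)$, i.e.\ \eqref{eq c-sub}; the cases $\phi'(0)=0$ and $\phi'(0)>0$ are trivial or symmetric. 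This is a two-line argument with no measure theory. Your route --- proving the inequality in Proposition~\ref{prop c-sub}(2) by showing $|(u\circ\xi)'|\le f\circ\xi$ a.e.\ and integrating --- is also valid (Monge subsolutions are locally Lipschitz by definition, so $u\circ\xi$ is absolutely continuous on compact intervals), and it has the mild advantage of producing the optimal-control inequality directly; the paper's route is shorter. One bookkeeping point in your sketch: the forward-increment bound you display, $\limsup_{h\to 0^+}\bigl(u(\xi(t))-u(\xi(t+h))\bigr)/h\le f(\xi(t))$, yields the \emph{lower} bound $(u\circ\xi)'\ge -f\circ\xi$, which is exactly what $u(\xi(t_1))\le u(\xi(t_2))+\int_{t_1}^{t_2}f(\xi(r))\,dr$ requires (equivalently, $g(t)=u(\xi(t))+\int_{t_1}^{t}f(\xi(r))\,dr$ is nondecreasing); the upper bound $(u\circ\xi)'\le f\circ\xi$, hence the monotonicity of the $g$ you wrote down, comes instead from increments with $h<0$. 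Since the sub-slope bound controls both one-sided increments (or since one may reverse the curve), both inequalities hold and your argument closes, but as written the ``hence'' has the sign flipped.
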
 
\begin{proof}
We begin with a proof of the implication ``$\Rightarrow$''. 
Since the local Lipschitz continuity of c-subsolutions $u$ is provided in Lemma~\ref{lem c-lip}, it suffices to 
verify that $|\nabla^- u|(x_0)\leq f(x_0)$ for every $x_0\in \Omega$. 
To this end, we fix $x_0\in\Omega$.
Since $u$ is a c-subsolution of \eqref{eikonal eq}, 
using \eqref{prop c-sub eq} with $s=0$ and $\xi(0)=x_0$ we have
\[  
u(x_0)-u(\xi(t))\leq \int_0^t f(\xi(s))\, ds
\] 
for any $\xi\in \mathcal{A}_{x_0}(\R, B_r(x_0))$ and $0\le t\le T^+_{B_r(x_0)}[\xi]$. Therefore  
\[
{u(x_0)-u(x)\over  d(x, x_0)}\leq {1\over  d(x, x_0)} \int_0^t f(\xi(s))\, ds \leq \frac{1}{d(x,x_0)} \ell(\xi) \sup_{B_r(x_0)}f
\]
for any $x\in B_r(x_0)$ and any curve $\xi\subset B_r(x_0)$ joining $x_0$ and $x$. By the continuity of $f$ 
and by the fact that $\X$ is a length space, taking the infimum over all 
$\xi$ and then sending $r\to 0$ we obtain 
\[
|\nabla^- u|(x_0)\leq f(x_0)
\]
as desired. 

We next prove the reverse implication ``$\Leftarrow$''. We again fix $x_0\in \Omega$ arbitrarily.  We take an 
arbitrary curve $\xi\in \A_{x_0}(\R, \Omega)$; in particular $\xi(0)=x_0$. Suppose that there is a function 
$\phi\in C^1(\R)$ such that $t\mapsto u(\xi(t))-\phi(t)$ attains a local maximum at $t=0$. Then there is some $t_0>0$
such that we have
\[
\phi(t)-\phi(0)\geq u(\xi(t))-u(\xi(0))
\]
when $-t_0<t<t_0$ . If $\phi'(0)=0$, then we obtain immediately the desired inequality 
\eqref{eq c-sub}. If $\phi'(0)\neq 0$, then without loss we may assume that 
$\phi'(0)<0$, in which case
$\phi(t)-\phi(0)<0$ for all $t\in (0, t_1)$ for sufficiently small $t_1\in (0, t_0)$. (If 
$\phi'(0)>0$, then we can consider $t\in (-t_1,0)$ instead below.) It follows that 
\[
|\phi'(0)|\leq \limsup_{t\to 0+}{u(\xi(0))-u(\xi(t))\over t}\leq |\nabla^- u|(x_0).
\]
Since $u$ is a Monge subsolution, we have $|\nabla^- u|(x_0)\leq f(x_0)$ and thus deduce \eqref{eq c-sub} again. 
\end{proof}

\begin{prop}[Relation between s- and Monge subsolutions]\label{prop eikonal sub2}
Let $(\X, d)$ be a complete length space and $\Omega$ be an open set in $\X$. Assume that 
$f$ is locally uniformly continuous and $f\geq 0$ in $\Omega$.
Let $u\in C(\Omega)$. Then the following results hold.
\begin{enumerate}
\item[(i)] If $u$ is a Monge subsolution of \eqref{eikonal eq}, then it is an s-subsolution of \eqref{eikonal eq}. 
\item[(ii)] If $u$ is a locally uniformly continuous s-subsolution of \eqref{eikonal eq}, then it is a Monge subsolution of \eqref{eikonal eq}. 
\end{enumerate}
\end{prop}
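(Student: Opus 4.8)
The plan is to read off the s-subsolution inequality \eqref{s-sub eikonal} directly from the Monge inequality $|\nabla^- u|\le f$. Fix $x_0\in\Omega$ and suppose $\psi_1\in\ul{\mathcal C}(\Omega)$ and $\psi_2\in\lipl(\Omega)$ are such that $u-\psi_1-\psi_2$ has a local maximum at $x_0$. The maximum condition reads, for $y$ near $x_0$, $u(y)-u(x_0)\le(\psi_1(y)-\psi_1(x_0))+(\psi_2(y)-\psi_2(x_0))$, which I would rearrange as
\[
\psi_1(x_0)-\psi_1(y)\le (u(x_0)-u(y))+(\psi_2(y)-\psi_2(x_0)).
\]
Applying the elementary estimate $[a+b]_+\le[a]_++|b|$, dividing by $d(x_0,y)$, and taking $\limsup_{y\to x_0}$ with the subadditivity of $\limsup$, I obtain $|\nabla^-\psi_1|(x_0)\le|\nabla^- u|(x_0)+|\nabla\psi_2|(x_0)$. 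Since $\psi_1\in\ul{\mathcal C}(\Omega)$ forces $|\nabla\psi_1|(x_0)=|\nabla^-\psi_1|(x_0)$, the Monge bound $|\nabla^- u|(x_0)\le f(x_0)$, together with $|\nabla\psi_2|(x_0)\le|\nabla\psi_2|^*(x_0)$ (the slope is dominated by its upper semicontinuous envelope), yields exactly \eqref{s-sub eikonal}. This direction rests only on the local maximum condition and the defining property of $\ul{\mathcal C}$; the length-space geometry and the uniform continuity of $f$ play no essential role.

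\textbf{Part (ii).} Here the substance is a Lipschitz-type descent estimate, after which $|\nabla^- u|\le|\nabla u|$ delivers the Monge inequality and the regularity needed to place $u$ in the admissible class. I would argue by contradiction: suppose $|\nabla^- u|(x_0)>f(x_0)$ and fix $\vep>0$ with $|\nabla^- u|(x_0)>f(x_0)+3\vep$; set $M:=f(x_0)+2\vep$ and choose $r$ with $\ol{B_r(x_0)}\subset\Omega$ and $f<f(x_0)+\vep$ on $\ol{B_r(x_0)}$. By definition of the sub-slope there is $z\in B_r(x_0)$, as close to $x_0$ as desired, with $u(x_0)-u(z)>M\,d(x_0,z)$. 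The main device is the \emph{upward cone} $\psi_1:=M\,d(\cdot,z)$: away from its vertex $z$ one checks, exactly as for the distance function in a length space, that $|\nabla\psi_1|\equiv M$ and $|\nabla^-\psi_1|\equiv M$, so $\psi_1\in\ul{\mathcal C}$ on a neighborhood of any point $\ne z$. I would then study $\Phi:=u-\psi_1$, which satisfies $\Phi(x_0)>\Phi(z)$, and look for a point where $\psi_1$ touches $u$ from above.

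Because $(\X,d)$ need not be proper I cannot simply maximize $\Phi$ on a closed ball; instead I would invoke Ekeland's variational principle (as in the proof of Theorem \ref{thm comparison monge}, cf. \cite{Ek1,Ek2}) to produce, for a parameter $\kappa>0$, a point $x^\ast$ at which $x\mapsto\Phi(x)-\kappa\,d(x,x^\ast)$ attains a local maximum, with $\Phi(x^\ast)\ge\Phi(x_0)$ (hence $x^\ast\ne z$) and $d(x^\ast,x_0)$ controlled by $\lambda/\kappa$, where $\lambda:=\sup\Phi-\Phi(x_0)$. Taking $\psi_2:=\kappa\,d(\cdot,x^\ast)\in\lipl$, whose slope is $\kappa$, and applying \eqref{s-sub eikonal} at $x^\ast$ gives $M=|\nabla\psi_1|(x^\ast)\le f(x^\ast)+\kappa$. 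As long as $x^\ast\in B_r(x_0)$ we have $f(x^\ast)<f(x_0)+\vep$, so $M<f(x_0)+\vep+\kappa$, and choosing $\kappa<\vep$ contradicts $M=f(x_0)+2\vep$. Carrying this out for every base point yields $u(x_0)-u(z)\le M\,d(x_0,z)$ for all nearby $z$, hence (symmetrizing over which endpoint is larger) local Lipschitz continuity with constant close to $f(x_0)$, and in particular $|\nabla^- u|(x_0)\le f(x_0)$, i.e. the Monge subsolution property \eqref{eq:Monge-supsubsol-Eik}.

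The main obstacle is exactly the \emph{localization} of the contact point $x^\ast$: one must keep $x^\ast$ inside $B_r(x_0)$, where $f$ is close to $f(x_0)$, while simultaneously keeping the Ekeland perturbation slope $\kappa$ below $\vep$. This forces a delicate balance of $\kappa$ against $\lambda/r$, and the estimate $\lambda\le\omega_u(r)+M\,d(x_0,z)$ runs through the modulus of continuity $\omega_u$ of $u$. It is precisely here that the \emph{local uniform continuity} of $u$ (and of $f$) is indispensable and cannot be weakened to mere continuity, and where the constants must be chosen with care. When $(\X,d)$ is proper, $\ol{B_r(x_0)}$ is compact, the supremum of $\Phi$ is attained at an interior point with no need for Ekeland, and the localization is automatic; this is the mechanism behind the simplification recorded in Corollary \ref{cor equiv Monge}.
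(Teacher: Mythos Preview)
Your direct argument is correct and in fact more elementary than the paper's route, which detours through Proposition~\ref{prop eikonal sub1} (Monge $\Rightarrow$ c-subsolution) and then Proposition~\ref{prop sub} (c-subsolution $\Rightarrow$ s-subsolution). Both approaches land on the same inequality; yours simply avoids the curve-based intermediary.

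\textbf{Part (ii).} Here your approach differs substantially from the paper's, and the obstacle you yourself flag is a genuine gap rather than a matter of careful bookkeeping. Ekeland on $\ol{B_r(x_0)}$ produces a contact point $x^\ast$ with $d(x^\ast,x_0)\le\lambda/\kappa$; to keep $x^\ast$ in the \emph{interior} (so the s-subsolution test is available) while also keeping the perturbation slope $\kappa<\vep$, you need $\lambda<\vep r$. Your own bound $\lambda\le\omega_u(r)+M\,d(x_0,z)$ then forces, after sending $d(x_0,z)\to0$, the inequality $\omega_u(r)<\vep r$, i.e.\ $\omega_u(r)/r<\vep$. This is a Lipschitz condition on $u$---precisely what you are trying to prove---and fails outright if $u$ is merely H\"older continuous. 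Recentering the Ekeland argument or enlarging the ball does not help: on a large ball the point $x^\ast$ can drift to where $f$ is far from $f(x_0)$, and the contradiction evaporates. (There is also a minor technicality: $M\,d(\cdot,z)\notin\ul{\mathcal C}(\Omega)$ because $|\nabla^- d(\cdot,z)|(z)=0\neq1$; one must either restrict to a subdomain excluding $z$ or use a smoothed cone such as $M\sqrt{d(\cdot,z)^2+\eta^2}$, but this is fixable.)

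The paper circumvents the localization problem entirely by outsourcing it to the comparison principle for s-solutions \cite[Theorem~5.3]{GaS}. Concretely, it constructs an explicit barrier
\[
v_r(x)=u(x_0)+M_r\,d(x_0,x)+M\,[d(x_0,x)-r]_+,
\]
verifies directly that $v_r$ is a Monge---hence, by Proposition~\ref{prop eikonal super}(i), an s---supersolution of $|\nabla u|=f+\delta$ on $B_{2r}(x_0)\setminus\{x_0\}$, checks $v_r\ge u$ on $\partial B_{2r}(x_0)$ (this is where the large slope $M$ in the outer annulus is used), and then applies the comparison theorem to conclude $u\le v_r$ on the whole ball. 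The hypotheses of local uniform continuity of $u$ and $f$ enter exactly through the boundary-regularity requirement~\eqref{bdry verify0} of that comparison result, and the output $u(x)\le u(x_0)+d(x,x_0)\sup_{B_r(x_0)}f$ immediately gives $|\nabla u|(x_0)\le f(x_0)$. Your Ekeland argument is, in effect, an attempt to reprove a special case of that comparison theorem by hand, and the missing piece is precisely the mechanism that \cite{GaS} supplies.
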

\begin{proof}
(i)  It is an immediate consequence of Proposition \ref{prop sub} and Proposition \ref{prop  eikonal sub1}. 

(ii) Take $\delta>0$ arbitrarily and $f_\delta=f+\delta$ in $\Omega$.  
Fix $x_0\in \Omega$ and $r>0$ small such that $B_{4r}(x_0)\subset \Omega$ and $u, f$ are both bounded
on $B_{4r}(x_0)$.  For $s\in [0, 4r)$ we set
\[
M_s:=\sup_{B_s(x_0)} f_\delta,
\]
and choose $M>0$ such that
\[
M>\max\left\{M_{2r}, {\sup_{B_{3r}(x_0)} u- u(x_0)\over r}\right\}. 
\]
Define a continuous function $g_r: [0, 3r)\to [0, \infty)$ by
\[
g_r(t):=M_r t+M(t-r)_+
\]
for $t\in [0, 3r)$. Due to the choice of $M$ above, the function defined by
\[
v_r(x)=u(x_0)+g_r(d(x_0, x))
\] 
for $x\in B_{3r}(x_0)$, satisfies $v_r\geq u$ on $\partial B_{2r}(x_0)$.
Moreover, for any $x\in B_{2r}(x_0)$ with $x\neq x_0$ and any $\vep>0$, we can find an arc-length 
parametrized curve $\xi$ such that $\xi(0)=x$, $\xi(t_\vep)=x_0$ 
and $t_\vep-\vep\leq d(x, x_0)\leq t_\vep$. For any $t\in [0, t_\vep]$,  
we have
\[
d(x_0,\xi(t))\ge d(x_0,x)-d(x,\xi(t))\ge (t_\vep-\vep)-(t_\vep-t)=t-\vep.
\]
Therefore
\[
t-\vep \leq d(x_0, \xi(t))\le \ell(\xi[0,t])\leq d(x, x_0)+\vep-d(x, \xi(t)).
\]
Taking $x_\vep=\xi(\sqrt{\vep})$, we have 
\[
{v_r(x)-v_r(x_\vep)\over d(x, x_\vep)}
 \geq \frac{g_r(d(x_0, x))-g_r(d(x_0, x_\vep))}{d(x, x_0)-d(x_0, x_\vep)+\vep}\geq {1\over 1+2\sqrt{\vep}} 
 \frac{g_r(d(x_0, x))-g_r(d(x_0, x_\vep))}{d(x, x_0)-d(x_0, x_\vep)}
\] 
when $\vep>0$ is sufficiently small. 
Hence if $0<d(x,x_0)\le  r$,  then for sufficiently small $\vep>0$ we have $g_r(d(x_\vep,x_0))\ge M_rd(x_\vep,x_0)$,
and then by the choice of $M_r$ we have
\[
\limsup_{x_\vep\to x} \frac{g_r(d(x_0, x))-g_r(d(x_0, x_\vep))}{d(x, x_0)-d(x_0, x_\vep)}= M_r\ge f_\delta(x).
\]
If $r< d(x, x_0)<2r$, then for sufficently small $\vep$ we have $d(x_\vep,x_0)>r$ as well, and so we get
\[
\frac{g_r(d(x_0, x))-g_r(d(x_0, x_\vep))}{d(x, x_0)-d(x_0, x_\vep)}
=\frac{(M_r+M)d(x_0,x)-(M_r+M)d(x_0,x_\vep)}{d(x, x_0)-d(x_0, x_\vep)}.
\]
Therefore as $d(x_0,x)<2r$, we have 
\[
\limsup_{x_\vep\to x} \frac{g_r(d(x_0, x))-g_r(d(x_0, x_\vep))}{d(x, x_0)-d(x_0, x_\vep)}\geq M> M_{2r}\ge f_\delta(x).
\]
In either case, we see that $|\nabla^- v_r|(x)\geq f_\delta(x)$; in other words, $v_r$ is a Monge supersolution of 
\[
|\nabla u|=f_\delta \quad \text{in $B_{2r}(x_0)\setminus \{x_0\}$}. 
\]
In view of Proposition \ref{prop eikonal super}(i), we see that $v_r$ is an s-supersolution of the same equation
(keep in mind also that by its construction, $v_r$ is $M$-Lipschitz); in particular, we have
\beq\label{bdry verify1}
v_r(x)-u(y)\geq v_r(x)-v_r(y)\geq -Md(x, y)
\eeq
for all $x\in B_{2r}(x_0)$ and $y\in \partial B_{2r}(x_0)\cup \{x_0\}$. 

On the other hand, $u$ is an s-subsolution and is uniformly continuous in $B_{2r}(x_0)$ with some modulus $\sigma_0$. We have
\[
u(x)-u(y)\leq \sigma_0(d(x, y))
\]
for all $x\in B_{2r}(x_0)$ and $y\in \partial B_{2r}(x_0)\cup \{x_0\}$. Combining this with \eqref{bdry verify1}, 
we have shown that the condition \eqref{bdry verify0} holds with $\Omega=B_{2r}(x_0)\setminus \{x_0\}$, 
$v=v_r$, $\zeta=u$ and $\sigma(s)=\max\{Ms, \sigma_0(s)\}$ for $s\geq 0$. 
Since $f_\delta=f+ \delta$ in $\Omega$, the function $u$ must also be an s-subsolution for the eikonal equation 
related to the function $f_\delta$. We thus can use the comparison result 
\cite[Theorem~5.3]{GaS} to get $u\leq v_r$ in $B_{2r}(x_0)\setminus \{x_0\}$. Letting $\delta\to 0$, we are led to
\[
u(x)\leq u(x_0)+d(x, x_0)\sup_{B_r(x_0)} f\quad \text{for all $x\in B_r(x_0)$. }
\]
One can use the same argument to show that for all $x, y\in B_{r/4}(x_0)$ (and therefore $d(x, y)\leq r/2$),
\[
u(y)\leq u(x)+d(x, y)\sup_{B_{r/2}(x)} f\leq  u(x)+d(x, y) \sup_{B_r(x_0)} f,
\]
which yields (recalling that we chose $r>0$ small enough so that $f$ is bounded on $B_{4r}(x_0)$)
\[
|u(x)-u(y)|\leq d(x, y)\sup_{B_r(x_0)} f. 
\]
This immediately implies that 
\beq\label{sub char}
|\nabla^- u|(x_0)\leq |\nabla u|(x_0)\leq f(x_0).
\eeq
Hence, we can conclude that $u$ is a Monge subsolution of \eqref{eikonal eq}, since $x_0$ is arbitrarily taken. 
\end{proof}

In the proof of (ii) above, the local uniform continuity of $u$ and $f$ (especially near $x_0$) enables us to adopt 
the comparison principle. The uniform continuity can be removed if the space $(\X, d)$ has some compactness a priori. 

We next turn to the relation between supersolutions. 

\begin{prop}[Relation between supersolutions]\label{prop eikonal super}
Let $(\X,  d)$ be a complete length space and $\Omega$ be an open set in $\X$. 
Assume that $f$ is locally uniformly continuous 
and $f\ge 0$ in $\Omega$.  Let $u\in \lipl(\Omega)$. Then 
\begin{enumerate}
\item[(i)] $u$ is a Monge supersolution of \eqref{eikonal eq} if and only if $u$ is an s-supersolution of \eqref{eikonal eq}. 
\item[(ii)]  Assume in addition that $f>0$ on $\Omega$. If $u$ is a local c-supersolution of \eqref{eikonal eq}, then 
$u$ is a Monge supersolution of \eqref{eikonal eq}. 
\end{enumerate}
\end{prop}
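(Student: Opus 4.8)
\textbf{Proof proposal for Proposition \ref{prop eikonal super}.}

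\emph{Part (i).} The plan is to prove the two implications separately. For the direction ``Monge supersolution $\Rightarrow$ s-supersolution'', I would argue exactly as in Proposition \ref{prop sub}, but in the supersolution setting: fix $x_0\in\Omega$ and test functions $\psi_1\in\overline{\mathcal C}(\Omega)$, $\psi_2\in\lipl(\Omega)$ with $u-\psi_1-\psi_2$ attaining a local minimum at $x_0$. Since $\psi_1\in\overline{\mathcal C}(\Omega)$, by definition $|\nabla^+\psi_1|(x_0)=|\nabla\psi_1|(x_0)$, so there is a sequence $x_n\to x_0$ along which $(\psi_1(x_n)-\psi_1(x_0))/d(x_n,x_0)\to|\nabla\psi_1|(x_0)$. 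The minimum condition gives $u(x_n)-u(x_0)\ge(\psi_1+\psi_2)(x_n)-(\psi_1+\psi_2)(x_0)$; dividing by $d(x_n,x_0)$ and passing to the limit, and using $|\nabla^- u|(x_0)\ge f(x_0)$ (via a sequence realizing the sub-slope combined with the minimum inequality in the form $u(x_0)-u(x)\le(\psi_1+\psi_2)(x_0)-(\psi_1+\psi_2)(x)$ — here the care is that the sub-slope sequence for $u$ need not be the super-slope sequence for $\psi_1$, so one bounds $[\psi_1(x_n)-\psi_1(x_0)]_-$ and $[\psi_2(\cdot)]$ contributions by $|\nabla\psi_1|$, $|\nabla^*\psi_2|$ as in the literature), one reaches $|\nabla\psi_1|(x_0)\ge f(x_0)-|\nabla\psi_2|^*(x_0)$, which is \eqref{s-super eikonal}. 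For the converse ``s-supersolution $\Rightarrow$ Monge supersolution'', note that for a locally Lipschitz $u$ one can take as test function $\psi_1\equiv 0\in\overline{\mathcal C}(\Omega)$ (constants have zero slope and trivially satisfy $|\nabla^+\psi_1|=|\nabla\psi_1|$) and $\psi_2=u$ itself, which is in $\lipl(\Omega)$; then $u-\psi_1-\psi_2\equiv 0$ attains a (global) minimum everywhere, and the s-supersolution inequality becomes $0=|\nabla\psi_1|(x_0)\ge f(x_0)-|\nabla u|^*(x_0)$, i.e. $|\nabla u|^*(x_0)\ge f(x_0)$. Since this holds at every point and $f$ is continuous, upper semicontinuity of $x\mapsto|\nabla u|^*(x)$ plus $f$ continuous gives $|\nabla u|(x_0)\ge f(x_0)$ at each $x_0$; to upgrade from $|\nabla u|$ to $|\nabla^- u|$ I would instead choose $\psi_2$ to be a small multiple of a geodesic distance bump of the form $-kd(\cdot,x_0)$ steering the infimum toward the sub-slope direction — more precisely, exploit that $u-\psi_1-\psi_2$ minimized at $x_0$ with $\psi_2$ having a controlled one-sided slope lets one detect $|\nabla^- u|$ rather than the full slope. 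This last refinement is where I would be most careful.

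\emph{Part (ii).} Here the plan is to invoke Remark \ref{local c to s}: a local c-supersolution of \eqref{eikonal eq} is, locally near each $x_0$, an s-supersolution on a small ball $B_r(x_0)$. Then by part (i) (applied on $B_r(x_0)$, where $f$ is still locally uniformly continuous and $\geq 0$), it is a Monge supersolution on $B_r(x_0)$. Since $x_0$ was arbitrary, $u$ is a Monge supersolution on all of $\Omega$. The hypothesis $f>0$ is presumably needed to guarantee that the curve produced by Proposition \ref{prop c-super} (as adapted in Remark \ref{local c-super prop}) actually exits $B_r(x_0)$ in finite time, so that the exit-time machinery used to translate the c-supersolution condition into the slope inequality is non-vacuous; alternatively it enters through the quantitative control of the curve speed. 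I would also need to double-check that the local Lipschitz hypothesis $u\in\lipl(\Omega)$ in the statement lines up with Remark \ref{local c to s} (which does not by itself assert Lipschitz regularity of local c-supersolutions), so that the passage through part (i), which requires $u\in\lipl$, is legitimate.

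\emph{Main obstacle.} The genuinely delicate point is the converse in part (i): extracting the \emph{sub}-slope inequality $|\nabla^- u|(x_0)\ge f(x_0)$ (rather than merely $|\nabla u|(x_0)\ge f(x_0)$) from the s-supersolution condition. The asymmetry of the test class $\overline{\mathcal C}(\Omega)$ — which is tailored to detect super-slopes of the test function, not sub-slopes of $u$ — means a naive choice of $\psi_1,\psi_2$ only yields the two-sided slope. One must choose $\psi_2$ to be a distance-type perturbation that biases the local minimum toward points $y$ with $u(y)<u(x_0)$, so that the limiting difference quotient that appears is $[u(y)-u(x_0)]_-/d(x_0,y)$. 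Making this selection rigorously, and checking it stays inside $\lipl(\Omega)$ while not disturbing the minimum property, is the crux of the argument. Everything else (the Monge $\Rightarrow$ s direction, and all of part (ii)) is a routine adaptation of arguments already in place, namely Proposition \ref{prop sub} and Remark \ref{local c to s}.
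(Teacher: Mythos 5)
Your part (i) ``Monge $\Rightarrow$ s-supersolution'' direction and all of part (ii) are essentially the paper's argument (the paper proves (ii) exactly by combining Remark \ref{local c to s} with part (i), using $f>0$ only to secure $\inf_{B_r(x_0)}f>0$ so that the remark applies). The genuine gap is the converse in part (i), which you yourself flag as the crux and leave incomplete. Two concrete problems: first, your intermediate claim that $|\nabla u|^*(x_0)\ge f(x_0)$ for all $x_0$ upgrades to $|\nabla u|(x_0)\ge f(x_0)$ by ``upper semicontinuity of $|\nabla u|^*$ plus continuity of $f$'' is false — $|\nabla u|^*$ is the relaxed (limsup of nearby) slope and can strictly exceed the pointwise slope, so no such pointwise conclusion follows; and even if it did, $|\nabla u|\ge f$ is not the Monge supersolution inequality (the paper's example $|x|-1$ versus $1-|x|$ shows why the sub-slope is essential). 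Second, your proposed fix via $\psi_2=-k\,d(\cdot,x_0)$ is only gestured at: you never explain why $u-\psi_1-\psi_2$ should attain a local minimum at $x_0$, and your worry about ``not disturbing the minimum property'' shows the missing mechanism. The point is to argue by contradiction: if $|\nabla^-u|(x_0)<f(x_0)$, pick $k$ with $|\nabla^-u|(x_0)<k<f(x_0)$; then by definition of the sub-slope $u(x_0)-u(y)\le k\,d(x_0,y)$ for $y$ near $x_0$, so $u+k\,d(\cdot,x_0)$ automatically has a local minimum at $x_0$, and testing with $\psi_1\equiv 0\in\overline{\mathcal C}(\Omega)$, $\psi_2=-k\,d(\cdot,x_0)\in\lipl(\Omega)$ in \eqref{s-super eikonal} gives $0\ge f(x_0)-|\nabla\psi_2|^*(x_0)\ge f(x_0)-k$, a contradiction. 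Without this (or an equivalent) completion, the implication ``s-supersolution $\Rightarrow$ Monge supersolution'' is simply not proved.

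For comparison, the paper takes a different route for this direction: assuming $|\nabla^-u|(x_0)<f(x_0)$, it builds the explicit cone-type function $v(x)=u(x_0)-(f(x_0)-\vep)\,d(x,x_0)+\delta r$ on a small ball $B_r(x_0)$, checks directly from the test-function definition that $v$ is an s-subsolution of $|\nabla v|=f$ there, verifies $v\le u$ on $\partial B_r(x_0)$, and then invokes the comparison principle for s-solutions (\cite[Theorem 5.3]{GaS}, via \eqref{bdry verify0}) to get $v\le u$ in $B_r(x_0)$, contradicting $v(x_0)>u(x_0)$; this is where the local uniform continuity of $f$ and the local Lipschitz bound on $u$ are consumed. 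The contradiction argument sketched above, had you completed it, would actually be more elementary than the paper's (no comparison principle needed), but as written your proposal neither executes it nor supplies the comparison-based alternative, so the statement remains unestablished at its most delicate point.
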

\begin{proof}
(i) Let us first show the equivalence between a Monge supersolution and an s-supersolution. 
We begin with the implication ``$\Rightarrow$''. Let $u$ be a Monge supersolution. 
Suppose that there exist 
 $\psi_1\in \ol{\mathcal{C}}(\Omega)$ and $\psi_2\in \lipl(\Omega)$ such that $u-\psi_1-\psi_2$ attains a local minimum at $x_0$. 
 If $f(x_0)=0$, then the desired inequality
\[
 |\nabla \psi_1|(x_0)\geq -|\nabla \psi_2|^\ast(x_0)
\]
is trivial. It thus suffices to consider the case $f(x_0)>0$. By the definition of Monge supersolutions, for any $x_0\in \Omega$, 
\[
|\nabla^- u|(x_0)\geq f(x_0)>0. 
\]
Then for any $\delta>0$, for each $\vep>0$ we can find $x_\vep\in \Omega$ with $x_\vep\to x_0$ as $\vep\to 0$ such that
\beq\label{eq weak1}
u(x_\vep)-u(x_0)\leq (-f(x_0)+\delta) d(x_0, x_\vep). 
\eeq
It follows from \eqref{eq weak1} and the maximality of $u-\psi_1-\psi_2$ at $x_0$ that 
\[
\psi_1(x_\vep)-\psi_1(x_0)+\psi_2(x_\vep)-\psi_2(x_0)\leq (-f(x_0)+\delta) d(x_0, x_\vep), 
\]
which implies that 
\[
{|\psi_1(x_\vep)-\psi_1(x_0)|\over  d(x_0, x_\vep)}
\ge \frac{\psi_1(x_0)-\psi_1(x_\vep)}{d(x_0,x_\vep)}
\geq f(x_0)-\delta-{|\psi_2(x_\vep)-\psi_2(x_0)|\over  d(x_0, x_\vep)}.
\]
Letting $\vep\to 0$ and then $\delta\to 0$, we obtain
\[
|\nabla \psi_1|(x_0)\geq f(x_0)-|\nabla \psi_2|^\ast(x_0).
\]
It follows that $u$ is an s-supersolution.  

The proof for the reverse implication ``$\Leftarrow$'' is given next. So we assume that $u$ is an
s-supersolution. Suppose that $u$ is not a Monge supersolution. Then
there exists $x_0\in \Omega$ such that 
\[
\limsup_{x\to x_0}\frac{[u(x_0)-u(x)]_+}{ d(x,x_0)}<f(x_0).
\]
A contradiction is immediately obtained if $f(x_0)=0$. We thus only consider $f(x_0)>0$ below.
Then there exists $\delta>0$ such that for all $x\in B_\delta(x_0)$, 
\beq\label{eq monge-s1}
\frac{u(x_0)-u(x)}{ d(x,x_0)}-f(x_0)\leq -2\delta.
\eeq
By the continuity of $f$, for any $0<\vep<\min\{\delta,f(x_0)/2\}$, we can choose $0<r<\delta$ such that 
\beq\label{continuity f}
|f(x)-f(x_0)|\leq \vep \quad \text{for all $x\in B_r(x_0)$.}
\eeq
Observe that $f(x)\ge f(x_0)/2>0$ whenever $x\in B_r(x_0)$.
Let
\[
v(x):=u(x_0)-(f(x_0)-\vep) d(x,x_0)+\delta r.
\]
Then in view of \eqref{eq monge-s1}, we have $v(x)\le u(x)$ for all $x\in \partial B_r(x_0)$.  
Moreover,  we claim that $v$ is an s-subsolution of 
 \[
 |\nabla v|=f \quad \text{in $B_r(x_0)$}.
 \]
Indeed, for any $x\in B_r(x_0)$, if $v-\psi_1-\psi_2$ achieves a maximum at $x$, where 
$\psi_1\in \underline{\mathcal{C}}(\Omega)$ and $\psi_2\in \text{Lip}_{loc}(\Omega)$, then
\[
\psi_1(x)-\psi_1(y)\le v(x)-v(y)+\psi_2(y)-\psi_2(x).
\]
It follows that 
\[
\begin{aligned}
|\nabla \psi_1|(x)=\limsup_{y\to x}\frac{\psi_1(x)-\psi_1(y)}{d(x, y)} &\le \limsup_{y\to x}\frac{v(x)-v(y)}{d(x, y)}+\limsup_{y\to x}\frac{\psi_2(y)-\psi_2(x)}{d(x, y)}\\
&\le f(x_0)-\vep+|\nabla \psi_2|^*(x)\\
&\le f(x)+|\nabla \psi_2|^*(x).
\end{aligned}
\]
The claim has been proved. 
Applying the comparison principle for s-solutions, we have $v\le u$ in $B_r(x_0)$. This contradicts the fact that 
\[
v(x_0)=u(x_0)+\delta r>u(x_0).
\]
Our proof for the equivalence of supersolutions is now complete.

(ii)  It follows immediately from (i) and Remark \ref{local c to s}.
Note that Remark \ref{local c to s} requires $\inf_{B_r(x)}f>0$ for any $x\in \Omega$ and $r>0$ small, 
which is implied by the continuity and positivity of $f$ in $\Omega$. 
\end{proof}

We are not able to show that any Monge supersolution of \eqref{eikonal eq} is a local c-supersolution. 
However, if $u$ is a Monge solution, then we can show that $u$ must be a local c-solution. 

\begin{prop}[Relation between Monge and local c-solutions]\label{prop eikonal solution}
Let $(\X,  d)$ be a complete length space and $\Omega$ be an open set in $\X$. 
Assume that $f$ is locally uniformly continuous 
and $f>0$ in $\Omega$.  If $u$ is a Monge solution of \eqref{eikonal eq}, then $u$ is a local c-solution of \eqref{eikonal eq}.
\end{prop}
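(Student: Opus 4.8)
The plan is to show separately that a Monge solution $u$ is a c-subsolution and a local c-supersolution of \eqref{eikonal eq}. The subsolution half is immediate: since $u$ is a Monge solution, in particular it is a Monge subsolution, so Proposition \ref{prop eikonal sub1} gives directly that $u$ is a c-subsolution. All the work is in the supersolution half, where the difficulty is that we cannot simply invoke a converse to Proposition \ref{prop eikonal super}(ii)---indeed the paragraph preceding the statement explicitly says that whether a Monge supersolution is a local c-supersolution is open. The key point is that we have the full solution at our disposal, not just the supersolution inequality, and in particular (by Proposition \ref{prop eikonal sub2}(ii) or the regularity already recorded) $u$ is locally Lipschitz with $|\nabla^- u| = |\nabla u| = f$ in $\Omega$.

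To verify the local c-supersolution property at a fixed $x_0 \in \Omega$, I would pick $r>0$ small enough that $B_{2r}(x_0)\subset\Omega$, that $f$ is bounded on $B_{2r}(x_0)$, and that the local uniform continuity of $f$ gives a small oscillation bound there. Then, for each $\vep>0$, I need to construct an admissible curve $\xi_\vep \in \A_{x_0}(\R, \X)$ with $\xi_\vep(0)=x_0$ and a function $w\in LSC$ on the relevant time interval satisfying $w(0)=u(x_0)$, $w \geq u\circ\xi_\vep - \vep$, and the viscosity supersolution inequality $|\phi'(t_0)| \geq f(\xi_\vep(t_0)) - \vep$ at every interior touching point $t_0$ of $w$ from below. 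The natural candidate curve is a near-steepest-descent curve for $u$: since $|\nabla^- u|(x)=f(x)>0$ at every point, starting from $x_0$ one can iteratively choose short arc-length segments along which $u$ decreases at a rate arbitrarily close to $f$, i.e., build $\xi_\vep$ so that $u(\xi_\vep(t)) \leq u(x_0) - \int_0^t f(\xi_\vep(s))\,ds + (\text{small error})$. This is exactly the kind of construction carried out in the proof of \cite[Proposition~2.8]{GHN} (see Remark \ref{local c-super prop}); here it is \emph{easier} because $|\nabla^- u|= f$ exactly rather than having only an inequality, so the descent curve can be made to track $f$ from both sides. One then sets $w = u\circ\xi_\vep$ (no approximation needed beyond the $\vep$ in the construction), or more safely the lower semicontinuous relaxation, and checks that the near-equality $\tfrac{d}{dt}\big(u\circ\xi_\vep\big)(t) \approx -f(\xi_\vep(t))$ forces $|\phi'(t_0)| \geq f(\xi_\vep(t_0)) - \vep$ whenever $\phi \in C^1$ touches $w$ from below at $t_0$, because $w$ is (nearly) a $1$-Lipschitz function of arc length decreasing at speed close to $f$.

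The main obstacle will be the curve construction itself: assembling the descent curve requires a countable (or transfinite, then truncated) concatenation of short segments, controlling that the accumulated error stays below $\vep$, that the curve remains arc-length parametrized and admissible ($|\xi_\vep'|\le 1$), and that it does not exit the ball $B_r(x_0)$ before the relevant time---the last point is handled by shrinking $r$ and using that $u$ is Lipschitz so the curve has finite length before exiting. A clean way to organize this is to invoke (a local version of) the optimal-control representation: since Proposition \ref{prop eikonal sub1} and Lemma \ref{lem c-lip} already identify $u$ as a locally Lipschitz Monge=c-subsolution, and since $f>0$, one can run the argument of \cite[Proof of Proposition~2.8]{GHN} verbatim on $B_r(x_0)$ with the value function built from $u$ restricted to $\partial B_r(x_0)$, noting that $u$ itself satisfies the dynamic-programming inequality in the reverse direction precisely because $|\nabla^- u| = f$ pointwise. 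I would therefore present the proof as: (1) subsolution half via Proposition \ref{prop eikonal sub1}; (2) record that $u\in\lipl(\Omega)$ with $|\nabla^- u| = |\nabla u| = f$; (3) for fixed $x_0$ and $\vep$, construct the near-steepest-descent curve $\xi_\vep$ inside a small ball, citing the mechanism of \cite[Proposition~2.8]{GHN} and Remark \ref{local c-super prop}; (4) take $w$ to be (the LSC relaxation of) $u\circ\xi_\vep$ and verify \eqref{wapprox}-type conditions and the test-function inequality, thereby matching Definition \ref{def local c}.
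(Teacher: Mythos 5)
Your subsolution half matches the paper (it is exactly Proposition \ref{prop eikonal sub1}), but the supersolution half has a genuine gap, in two places. First, the choice $w=u\circ\xi_\vep$ (or its LSC relaxation) cannot work as described. Your descent curve is assembled from short near-geodesic segments whose \emph{endpoints} satisfy a good decrement estimate, but the Monge condition $|\nabla^- u|=f$ is a limsup at a point: it supplies target points, not any control of $u$ along the near-geodesics joining them. Consequently $u\circ\xi_\vep$ may rise and have a local minimum in the interior of a segment; there a constant $\phi$ touches $w$ from below with $\phi'(t_0)=0$, and the required inequality $|\phi'(t_0)|\ge f(\xi_\vep(t_0))-\vep$ fails because $f>0$. (This is the same phenomenon the paper flags right after Definition \ref{defi c}: one cannot in general take $w=u\circ\xi$.) The correct choice is $w(t)=u(x_0)-\int_0^t f(\xi(s))\,ds$, for which the test inequality is automatic since $|w'|=f\circ\xi$; but then the requirement $w\ge u\circ\xi-\vep$ becomes the dynamic-programming inequality $u(x_0)\ge u(\xi_\vep(t))+\int_0^t f(\xi_\vep(s))\,ds-\vep$ at \emph{all} times $t$, which your construction gives only at segment endpoints. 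The repair used by the paper is to combine the exit-time inequality with the c-subsolution inequality of Proposition \ref{prop c-sub} applied on $[t,t_r^+]$, which upgrades it to all intermediate times.

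Second, the step that actually produces the exit-time inequality is missing. Your ``cleaner organization'' introduces the value function $U$ on $B_r(x_0)$ with boundary data $u$ on $\partial B_r(x_0)$ and asserts that $u$ satisfies the reverse dynamic-programming inequality ``precisely because $|\nabla^- u|=f$ pointwise''; but $u\ge U$ is exactly the nontrivial content of the supersolution half and does not follow from the pointwise identity without an argument. (The direct greedy construction has its own difficulty: the limsup gives no quantitative step length, so you must also rule out the concatenated curve stalling at an interior point before exiting $B_r(x_0)$; moreover the mechanism of \cite[Proposition~2.8]{GHN} starts from the c-supersolution definition, i.e.\ from the existence of a pair $(\xi,w)$, not from a pointwise slope bound, so it does not transfer verbatim.) The paper closes this step differently: $U$, being the c-solution of the local Dirichlet problem, is a Monge solution on $B_r(x_0)$ by Propositions \ref{prop eikonal sub1} and \ref{prop eikonal super}(ii); the boundary hypothesis of the comparison theorem is checked via Proposition \ref{prop bdry regularity}(1) together with the local Lipschitz bound on $u$; and then the comparison principle for Monge solutions (Theorem \ref{thm comparison monge}) yields $U\le u$, after which near-optimal curves for $U$ serve as your $\xi_\vep$. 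Without either this comparison argument or a complete direct construction (including the no-stalling argument and the all-times inequality), the supersolution half is not proved.
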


\begin{proof}
Suppose that $u$ is a Monge solution of \eqref{eikonal eq} in $\Omega$. By 
Proposition~\ref{prop eikonal sub1}, we know that $u$ must be a c-subsolution. 
It suffices to show that $u$ is a local c-supersolution of \eqref{eikonal eq}. 

 For any $x_0\in \Omega$, take $r>0$ small such that $u$ is Lipschitz in $\overline{B_r(x_0)}$, that is, there exists $L>0$ such that
\beq\label{lip local solution}
|u(x)-u(y)|\leq Ld(x, y) \quad\text{for any $x, y\in \overline{B_r(x_0)}$.}
\eeq
Letting  $\zeta(y)=u(y)$ for all $y\in \partial B_r(x_0)$, by \eqref{eq optimal control} with 
$\Omega=B_r(x_0)$ we have the unique c-solution $U$ in $B_r(x_0)$ given by
\begin{equation}\label{local formula}
U(x):=\inf\bigg\{ \int_{0}^{t_r^+} f(\xi(s))\, ds+ u\left(\xi(t_r^+)\right)\, :\, \xi\in \A_{x}(\R, \X) \text{ with }0<T^+_{B_r(x_0)}[\xi]<\infty\bigg\}.
\end{equation}
It follows from Proposition \ref{prop eikonal sub1} and Proposition \ref{prop eikonal super}(ii) that $U$ is a 
Monge solution of the eikonal equation in $B_r(x_0)$. 
Note also that, by Proposition \ref{prop bdry regularity}(1), 
\[
U(x)-u(y)\leq d(x, y )\max\left\{L,\ \sup_{B_r(x_0)} f\right\} \quad \text{for any $x\in B_r(x_0)$ and $y\in \partial B_r(x_0)$.}
\]
We then can adopt the comparison principle,  Theorem \ref{thm comparison monge}, to get 
$U\leq u$ in $B_r(x_0)$.
In view of \eqref{local formula}, it follows that for any $x\in B_r(x_0)$ and any $\vep>0$ small, there 
exists a curve $\xi_\vep\in A_x(\R, \X)$ such that
\beq\label{local solution1}
u(x)\geq U(x)\geq \int_{0}^{t_r^+} f(\xi_\vep(s))\, ds+ u\left(\xi_\vep(t_r^+)\right)-\vep,
\eeq
where $t_r^+=T^+_{B_r(x_0)}[\xi_\vep]$ denotes the exit time of $\xi_\vep$ from $B_r(x_0)$. 
On the other hand, since $u$ is a c-subsolution, we can use Proposition \ref{prop c-sub} to get, for any $0\leq t\leq t_r^+$,
\beq\label{local solution2}
u(\xi_\vep(t))\leq u\left(\xi_\vep(t_r^+)\right)+\int_t^{t_r^+} f(\xi_\vep(s))\, ds. 
\eeq
Combining \eqref{local solution1} and \eqref{local solution2}, we deduce that for any $0\leq t\leq t_r^+$,  
\[
u(x)\geq u(\xi_\vep(t))+\int_0^t f(\xi_\vep(s))\, ds-\vep.
\]
Setting 
\[
\xi(t)=\begin{cases}
\xi_\vep(t) & \text{if $t\geq 0$,}\\
\xi_\vep(-t) & \text{if $t<0$,}
\end{cases}
\quad\text{and }\quad w(t)=u(x)-\int_0^t f(\xi(s))\, ds
\]
for $t\in \R$, we easily see that $(\xi, w)$ satisfies the conditions for local c-supersolutions in 
Definition \ref{def local c}. Indeed, $w$ is of class
$C^1$ in $(-t_r^+, t_r^+)\setminus \{0\}$ with $w'=f\circ \xi$ in $(-t_r^+, t_r^+)\setminus \{0\}$. 
If there is $\phi\in C^1(-t_r^+, t_r^+)$ such that $w-\phi$ achieves a minimum at some 
$t_0\in (-t_r^+, t_r^+)$, then $t_0\neq 0$ since $f>0$ in $\Omega$. It then follows that $\phi'(t_0)=w'(t_0)$, which yields 
\[
|\phi^\prime|(t_0)=|w^\prime|(t_0)=f(\xi(t_0)).  
\]
Hence, $u$ is a local c-supersolution and therefore a local c-solution. 
\end{proof}

We now complete the proof of Theorem \ref{thm equiv Monge}.

\begin{proof}[Proof of Theorem \ref{thm equiv Monge}]
The proof consists of the results in Propositions \ref{prop eikonal sub1}, \ref{prop eikonal sub2}, 
\ref{prop eikonal super} and \ref{prop eikonal solution}. In addition, 
combining \eqref{sub char} and the definition of Monge supersolutions, we have \eqref{regular0} if any of (a), (b) and (c) holds. 
\end{proof}

\begin{rmk}
It is worth pointing out that, due to Proposition \ref{prop eikonal sub2} and Proposition \ref{prop eikonal super}(i), 
the equivalence between Monge solutions and locally uniformly continuous s-solutions still holds even if the 
assumption  $f>0$ is relaxed to $f\geq 0$ in $\Omega$. See Theorem \ref{thm equiv general} a
nd \ref{thm equiv general proper} below for a more general result focusing on the relation between Monge and s-solutions. 
\end{rmk}

The local uniform continuity of $f$ and $u$  
in Theorem~\ref{thm equiv Monge} can be dropped if the space $(\X, d)$ is assumed to proper, 
that is, any closed bounded subset of $\X$ is compact.

\begin{cor}[Local equivalence in a proper space]\label{cor equiv Monge}
Let $(\X,  d)$ be a proper complete 
geodesic space and $\Omega$ be a bounded open set in $\X$. Assume that $f\in C(\Omega)$ and $f>0$ in $\Omega$. 
Let $u\in C(\Omega)$. Then the following statements are equivalent: 
\begin{enumerate}
\item[(a)] $u$ is a c-solution of \eqref{eikonal eq};
\item[(b)] $u$ is a s-solution of \eqref{eikonal eq};
\item[(c)] $u$ is a Monge solution of \eqref{eikonal eq}. 
\end{enumerate}
In addition, if any of (a)--(c) holds, then $u\in \lipl(\Omega)$ and satisfies \eqref{regular0}. 
\end{cor}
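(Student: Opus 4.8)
The plan is to deduce Corollary~\ref{cor equiv Monge} from Theorem~\ref{thm equiv Monge} by showing that, in a proper complete geodesic space, the two hypotheses that are extra in Theorem~\ref{thm equiv Monge}—namely the local uniform continuity of $f$ and the requirement that the s-solution be locally uniformly continuous—come for free. The key observation is that in a proper space any closed ball $\overline{B_r(x_0)}$ with $\overline{B_r(x_0)}\subset\Omega$ is compact, so a function continuous on $\Omega$ is automatically uniformly continuous on such a ball; hence ``locally uniformly continuous'' and ``continuous'' coincide on $\Omega$. This applies both to $f\in C(\Omega)$ and to any $u\in C(\Omega)$, so statement (b) of the corollary (an s-solution that is merely continuous) is the same as statement (b) of Theorem~\ref{thm equiv Monge} (a locally uniformly continuous s-solution), and likewise the hypothesis on $f$ in the theorem is automatically met.

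The second point to address is the gap between ``local c-solution'' in Theorem~\ref{thm equiv Monge}(a) and ``c-solution'' in Corollary~\ref{cor equiv Monge}(a). The implication from c-solution to local c-solution is trivial, as already noted after Definition~\ref{def local c}. For the converse, once we know (via Theorem~\ref{thm equiv Monge}) that $u$ is a Monge solution, and hence (by \eqref{eq:Monge-sol-Eik} together with \eqref{regular0}) that $u$ is locally Lipschitz with $|\nabla^- u|=f$ everywhere, I would invoke the optimal-control representation: since $\Omega$ is bounded and $(\X,d)$ is proper and geodesic, the comparison principle for c-solutions (\cite[Theorem~3.1]{GHN}) and the existence result \eqref{eq optimal control} with boundary data $\zeta=u|_{\pO}$ apply, provided $u|_{\pO}$ satisfies the boundary-regularity condition \eqref{bdry regularity2}. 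The latter holds automatically because $u$ is a c-subsolution on $\Omega$ and, by properness, extends continuously to $\overline\Omega$, so Proposition~\ref{prop c-sub}(2) gives exactly \eqref{bdry regularity2} on $\pO$ after passing to the limit. Then the c-solution furnished by \eqref{eq optimal control} is, by Theorem~\ref{thm equiv Monge} applied in $\Omega$, also a Monge solution with the same boundary values, and the comparison principle for Monge solutions (Theorem~\ref{thm comparison monge}, whose hypotheses—bounded $\Omega$, $\inf_\Omega f>0$ on compact pieces—are met locally, or globally if $f$ is bounded below) forces it to equal $u$. Hence $u$ is itself a (global) c-solution.

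Concretely, the steps in order would be: (1) record that properness plus $\overline{B_r}\subset\Omega$ compact implies continuity equals local uniform continuity, so the hypotheses of Theorem~\ref{thm equiv Monge} hold for $f$ and for any continuous s-solution; (2) apply Theorem~\ref{thm equiv Monge} to get the equivalence of local c-solutions, continuous s-solutions, and Monge solutions, together with \eqref{regular0} and local Lipschitz continuity; (3) upgrade ``local c-solution'' to ``c-solution'' using the generalized Hopf--Rinow theorem (a proper length space is geodesic, so curves joining any point to $\pO$ of finite length exist) together with the optimal-control formula \eqref{eq optimal control}, the boundary-regularity consequence of Proposition~\ref{prop c-sub}, and the two comparison principles to identify the control-theoretic solution with $u$; (4) conclude (a)$\Leftrightarrow$(b)$\Leftrightarrow$(c) and that \eqref{regular0} holds.

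The main obstacle I anticipate is step~(3): making rigorous that a \emph{local} c-supersolution on a bounded, proper, geodesic $\Omega$ is a \emph{global} one. The subtlety is that the curve realizing the c-supersolution inequality at a point $x_0$ is only guaranteed to exist inside a small ball $B_r(x_0)$, and one must patch such local curves (or the local optimal-control value functions) into a single admissible curve reaching $\pO$, controlling the accumulated error $\vep$ along the way; properness is what rescues this, since it guarantees that the infimum in \eqref{eq optimal control} is attained (or nearly attained by curves of uniformly bounded length) and that $T^+_\Omega[\xi]<\infty$. One must also check that the boundary data $u|_{\pO}$ is genuinely continuous on $\pO$ and satisfies \eqref{bdry regularity2}, which again uses properness to pass from local Lipschitz bounds on $u$ in $\Omega$ to a modulus of continuity up to $\pO$. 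Apart from this patching argument, everything else is a direct transcription of the already-proved results with ``local uniform continuity'' replaced by ``continuity''.
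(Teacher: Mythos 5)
Your steps (1)--(2) are exactly the paper's (implicit) argument: the corollary is justified in the text solely by the remark that in a proper space closed balls contained in $\Omega$ are compact, so continuity of $f$ and $u$ coincides with local uniform continuity and Theorem~\ref{thm equiv Monge} applies verbatim. Up to that point your proposal is correct and is the same route as the paper.

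The genuine trouble is your step (3), which attempts to upgrade ``local c-solution'' to ``c-solution''. Note first that the paper itself never carries out this upgrade (the authors state after Definition~\ref{def local c} that they do not know whether a local c-supersolution must be a c-supersolution), so you have correctly spotted a mismatch between the corollary's wording and what Theorem~\ref{thm equiv Monge} delivers; but your proposed bridge does not work as written. Properness does \emph{not} give a continuous extension of $u$ to $\Oba$: the hypotheses only give $u\in C(\Omega)$ and local Lipschitz bounds of the form \eqref{local lip precise}, whose constants $\sup f$ may blow up near $\pO$ since $f\in C(\Omega)$ need not be bounded on $\Omega$; uniform continuity on the non-compact set $\Omega$ is not a consequence of compactness of $\Oba$. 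Consequently the objects you want to invoke are not available: the boundary datum $\zeta=u|_{\pO}$ need not exist, condition \eqref{bdry regularity2} and Proposition~\ref{prop bdry regularity} require $f\in C(\Oba)$ bounded, the comparison principle of Theorem~\ref{thm comparison monge} requires $\inf_\Omega f>0$ and $u,v\in C(\Oba)$ bounded, and the c-comparison principle of \cite[Theorem 3.1]{GHN} likewise needs control up to $\pO$ --- none of this follows from $f>0$ in $\Omega$, $f\in C(\Omega)$, $u\in C(\Omega)$. Finally, the ``patching'' of the local curves produced by Remark~\ref{local c-super prop} (or by the proof of Proposition~\ref{prop eikonal solution}) into a single admissible curve exiting $\Omega$, with one fixed error $\vep$ and finite exit times $T^{\pm}_\Omega[\xi]$, is precisely the open point and is only gestured at; properness alone does not obviously force the concatenated curve to leave $\Omega$ in finite length when $\inf_\Omega f=0$. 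So either read (a) as the local c-solution property, in which case your steps (1)--(2) already constitute the paper's proof and step (3) is unnecessary, or, if the global statement is insisted upon, a genuinely new argument (with stronger hypotheses up to the boundary) is needed; your sketch does not supply it.
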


\subsection{General Hamilton-Jacobi equations}

We next turn to a more general class of Hamilton-Jacobi equations. In this case, c-solutions are no longer defined. 
We can still show the equivalence between Monge solutions and s-solutions. 


\begin{thm}[Equivalence of Monge and $s$-solutions of general equations]\label{thm equiv general}
Let $(\X,  d)$ be a complete length space and $\Omega\subset \X$ be an open set.  Let 
$H: \Omega\times \R\times [0, \infty)\to \R$ be continuous and satisfy the following conditions:
\begin{enumerate}
\item[(1)] $(x, \rho)\mapsto H(x, \rho, p)$ is locally uniformly continuous, that is, for any $x_0\in \Omega$ 
and $\rho_0\in \R$, there exist $\delta>0$ small and a modulus of continuity $\omega$ such that 
\[
|H(x_1, \rho_1, p)-H(x_2, \rho_2, p)|\leq \omega\left(d(x_1, x_2)+|\rho_1-\rho_2|\right)
\]
for all $x_1, x_2\in B_\delta(x_0)$, $\rho_1, \rho_2\in [\rho_0-\delta, \rho_0+\delta]$ and $p\geq 0$. 
\item[(2)]  For any $x_0\in \Omega$ and $\rho_0\in \R$, there exist $\delta>0$ and  $\lambda_0>0$ such that 
\[
p\mapsto H(x, \rho, p)-\lambda_0 p
\] 
is increasing for every $(x, \rho)\in B_\delta(x_0)\times [\rho_0-\delta, \rho_0+\delta]$. 
\item[(3)] $p\mapsto H(x, \rho, p)$ is coercive in the sense that 
\beq\label{coercivity2}
\inf_{(x, \rho)\in \Omega\times [-R, R]} H(x, \rho, p)\to \infty \quad \text{as $p\to \infty$ for any $R>0$. }
\eeq
\end{enumerate}
 Then $u$ is a Monge solution of \eqref{stationary eq} if and only if $u$ is a locally uniformly continuous 
 s-solution of \eqref{stationary eq}. In addition, such $u$ is locally Lipschitz in $\Omega$. 
\end{thm}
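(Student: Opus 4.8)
The plan is to perform, in a neighbourhood of each point of $\Omega$, the implicit-function reduction of \eqref{stationary eq} to an eikonal equation announced in the introduction, and then to invoke the equivalence results already proved for the eikonal equation (Propositions~\ref{prop eikonal sub2} and \ref{prop eikonal super}). Fix $x_0\in\Omega$ and set $\rho_0:=u(x_0)$. By (1)--(3) we may choose $\delta_0>0$, $\lambda_0>0$ and a modulus $\omega$ so that, on $Q:=B_{\delta_0}(x_0)\times[\rho_0-\delta_0,\rho_0+\delta_0]$, the map $p\mapsto H(x,\rho,p)$ is strictly increasing for every $(x,\rho)\in Q$, $H(x,\rho,p)\to\infty$ as $p\to\infty$ uniformly for $(x,\rho)\in Q$, and $H$ is $\omega$-continuous in $(x,\rho)$ uniformly in $p$. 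Then for each $(x,\rho)\in Q$ with $H(x,\rho,0)\le0$ there is a unique $\mathfrak p(x,\rho)\ge0$ with $H(x,\rho,\mathfrak p(x,\rho))=0$, and the $\lambda_0$-monotonicity together with $\omega$ gives, by an elementary estimate, that $\mathfrak p$ is uniformly continuous where it is defined. Since $u$ is locally uniformly continuous, $\tilde f:=\mathfrak p(\cdot,u(\cdot))$ is then locally uniformly continuous and nonnegative wherever $H(\cdot,u(\cdot),0)\le0$. I also record, from the monotonicity of $H$ in $p\in[0,\infty)$, the identities
\[
H_a(x,\rho,p)=H\bigl(x,\rho,(p-a)_+\bigr),\qquad H^a(x,\rho,p)=H(x,\rho,p+a),
\]
which serve to match the general s-solution inequalities with the eikonal ones.

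Next I would show that $H(x,u(x),0)\le0$ for $x$ near $x_0$, in both alternatives. If $u$ is a Monge solution this is immediate, since $H(x,u(x),|\nabla^-u|(x))=0$ with $|\nabla^-u|(x)\ge0$ and $H(x,\cdot)$ increasing in $p$. If $u$ is a locally uniformly continuous s-solution and $H(x_0,u(x_0),0)>0$, then by continuity there are $\eta>0$ and a ball $B_\rho(x_0)\subset B_{\delta_0}(x_0)$ on which $H(x,u(x),0)\ge\eta$ and $u$ is bounded; arguing as in the Ekeland step of the proof of Theorem~\ref{thm comparison monge}, for large $n$ and small $\vep>0$ one obtains $x_n\in B_\rho(x_0)$ at which $x\mapsto u(x)-n\,d(x,x_0)-\vep\,d(x_n,x)$ has a local maximum in $\Omega$, and testing the s-subsolution inequality with $\psi_1\equiv0\in\underline{\mathcal{C}}(\Omega)$ and $\psi_2(x):=n\,d(x,x_0)+\vep\,d(x_n,x)\in\lipl(\Omega)$ yields $H(x_n,u(x_n),0)\le0$ (the infimum defining $H_a$ being attained at $q=0$), a contradiction. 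Hence $\tilde f$ is well defined, locally uniformly continuous and $\ge0$ on some ball $B_r(x_0)$.

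Now the reduction itself: on $B_r(x_0)$, the monotonicity of $H(x,u(x),\cdot)$ and the relation $H(x,u(x),\tilde f(x))=0$ give, for $q\ge0$, that $H(x,u(x),q)\le0\iff q\le\tilde f(x)$ and $H(x,u(x),q)\ge0\iff q\ge\tilde f(x)$. Combining this with the identities for $H_a$ and $H^a$, one checks that $u$ satisfies the s-subsolution (resp.\ s-supersolution) inequality of \eqref{stationary eq} at a point $x\in B_r(x_0)$ if and only if it satisfies the s-subsolution (resp.\ s-supersolution) inequality of the eikonal equation $|\nabla v|=\tilde f$ (that is, \eqref{eikonal eq} with $f$ replaced by $\tilde f$) at $x$; likewise $u$ is a Monge subsolution (resp.\ supersolution) of \eqref{stationary eq} in $B_r(x_0)$ iff it is one of $|\nabla v|=\tilde f$, directly from Definition~\ref{defi monge}. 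In each case one only uses the trivial restriction of the s- or Monge property from $\Omega$ to $B_r(x_0)$. Applying the eikonal results with right-hand side $\tilde f\in C(B_r(x_0))$, $\tilde f\ge0$: if $u$ is a locally uniformly continuous s-solution, Proposition~\ref{prop eikonal sub2}(ii) makes $u$ a Monge subsolution of $|\nabla v|=\tilde f$ near $x_0$ and, through its proof, locally Lipschitz, hence $u\in\lipl(\Omega)$; then Proposition~\ref{prop eikonal super}(i) makes $u$ a Monge supersolution as well, so $u$ is a Monge solution of \eqref{stationary eq} near $x_0$. Conversely, if $u$ is a Monge solution of \eqref{stationary eq} then $u\in\lipl(\Omega)$ by definition, $u$ is a Monge solution of $|\nabla v|=\tilde f$ near $x_0$, and Proposition~\ref{prop eikonal sub2}(i) together with Proposition~\ref{prop eikonal super}(i) makes $u$ an s-solution (automatically locally uniformly continuous) of \eqref{stationary eq} near $x_0$. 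Since $x_0\in\Omega$ was arbitrary, this establishes the equivalence, and the local Lipschitz bound inherited from Proposition~\ref{prop eikonal sub2} shows $u\in\lipl(\Omega)$ in either case.

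The delicate points are the reduction rather than the eikonal input: establishing $H(\cdot,u(\cdot),0)\le0$ for a bare s-solution, where Lipschitz regularity of $u$ is not yet available and one must instead use Ekeland's variational principle as in Theorem~\ref{thm comparison monge}; and verifying that the localized s-solution inequalities for the general Hamiltonian translate \emph{exactly} into the eikonal ones, which hinges on the representations $H_a(x,\rho,p)=H(x,\rho,(p-a)_+)$ and $H^a(x,\rho,p)=H(x,\rho,p+a)$ --- valid precisely because of the monotonicity hypothesis (2) and because the gradient variable ranges over $[0,\infty)$. Hypothesis (3) enters only to ensure that $\tilde f$ is finite-valued, so that the Lipschitz estimate inherited from Proposition~\ref{prop eikonal sub2} has a finite constant.
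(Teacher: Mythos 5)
Your proposal is correct and follows essentially the same route as the paper: prove $H(x,u(x),0)\le 0$ via Ekeland's variational principle in the s-solution case, solve $H(x,u(x),p)=0$ implicitly for $p=\tilde f(x)$ (finite by the coercivity (3), locally uniformly continuous by (1)--(2) with the $1/\lambda_0$ estimate), use the monotonicity identities for $H_a$ and $H^a$ to convert the s- and Monge inequalities for \eqref{stationary eq} into those for the eikonal equation with right-hand side $\tilde f$, and then invoke the eikonal equivalence results. Your deviations are minor and harmless --- a ball-by-ball reduction instead of defining $h$ globally on $\Omega$, a contradiction argument with $\psi_1\equiv 0$ in place of the paper's quadratic test function, and citing Propositions \ref{prop eikonal sub2} and \ref{prop eikonal super}(i) directly, which is in fact the appropriate reference here since $\tilde f$ may vanish while Theorem \ref{thm equiv Monge} assumes $f>0$.
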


\begin{proof}
Let $u$ be either a Monge solution or a locally uniformly continuous s-solution. 
We first claim that 
\beq\label{hamiltonian low bound}
H(x, u(x), 0)\leq 0\quad \text{for any $x\in \Omega$.}
 \eeq
Thanks to the condition (2), this is clearly true when $u$ is a Monge solution.  It thus suffices to 
show \eqref{hamiltonian low bound} for a locally uniformly continuous $s$-solution $u$. 
Fix $x_0\in \Omega$ arbitrarily. Let 
\[
\psi_1(x)={1\over \vep}d(x, x_0)^2
\]
for $\vep>0$ small. Then, due to the local boundedness of $u$, there exist $\delta>0$ and  
$y_\vep\in B_\delta(x_0)\subset \Omega$  such that 
\[
(u-\psi_1)(y_\vep)\geq \sup_{B_\delta(x_0)} (u-\psi_1)-\vep^2
\]
and $y_\vep\to x_0$ as $\vep\to 0$.
By Ekeland's variational principle (cf. \cite[Theorem 1.1]{Ek1}, \cite[Theorem 1]{Ek2}), there is 
a point $x_\vep\in B_\vep(y_\vep)$ such that 
\[
(u-\psi_1)(x_\vep)\geq (u-\psi_1)(y_\vep) 
\]
and $u-\psi_1-\psi_2$ attains a local maximum in $B_\delta(x_0)$ at $x_\vep\in B_\vep(y_\vep)$, where 
\[
\psi_2(x)=\vep d(x_\vep, x). 
\] 
It is clear that $x_\vep\to x_0$ as $\vep\to 0$. 
Since $u$ is an $s$-subsolution, we have 
\[
\inf_{|\rho|\leq \vep}H\left(x_\vep, u(x_\vep), {2\over \vep}d(x_\vep, x_0)+\rho\right)\leq 0,
\]
which, by the condition (2), yields 
\[
H\left(x_\vep, u(x_\vep), 0\right) \leq 0.
\]
Letting $\vep\to 0$, by the continuity of $H$, we deduce \eqref{hamiltonian low bound} at $x=x_0$. 
We have completed the proof of the claim. 

By the coercivity condition (3) we can define a 
function $h: \Omega\to [0, \infty)$ to be
\beq\label{implicit}
h(x):=\inf\{p\geq 0: H(x, u(x), p)> 0\}, 
\eeq 
and, thanks to the continuity of $H$ and the condition (2), we see that 
for each $x\in\Omega$, $h(x)\geq 0$ is the unique value satisfying
\begin{equation}\label{eq:Hamil-Eik}
H(x, u(x), h(x))=0. 
\end{equation}

We next claim that $h$ is locally uniformly continuous in $\Omega$. To see this, fix $x_0\in \Omega$ and an 
arbitrarily small $\delta>0$. We take $x, y\in B_{\delta_1}(x_0)$ with $\delta_1\leq \delta$ sufficiently small 
such that $u(x), u(y)\in [u(x_0)-\delta, u(x_0)+\delta]$. Then by the condition (1) we have 
\beq\label{local uniform ham}
H(x, u(x), p)-H(y, u(y), p)\leq \omega(d(x, y)+|u(x)-u(y)|)
\eeq
for any $p\geq 0$. 
Denote $W(x, y):=\omega(d(x, y)+|u(x)-u(y)|)$ for simplicity.
Since $H$ satisfies the condition (2), 
we can use \eqref{local uniform ham} to get, for any $x, y\in B_{\delta_1}(x_0)$,
\[
H\left(x, u(x), h(y)+{1\over \lambda_0} W(x, y) \right)
\geq H(x, u(x), h(y))+W(x, y)\geq H(y, u(y), h(y))=0,
\]
which, by \eqref{implicit} and \eqref{eq:Hamil-Eik}, yields 
\[
h(x)\leq h(y)+{1\over \lambda_0} W(x, y).
\]
We can analogously show that 
\[
h(x)\geq h(y)-{1\over \lambda_0} W(x, y)
\]
and therefore $h$ is uniformly continuous in $B_{\delta_1}(x_0)$.

From~\eqref{eq:Hamil-Eik} it is clear that $u$ is a Monge solution of~\eqref{stationary eq}  
if and only if $|\nabla^-u|(x)= h(x)$, that is, $u$ is a Monge solution of \eqref{eikonal eq} with $f=h$.
Note however that the function $h$ depends on $u$ implicitly in general.

We now show that $u$ is an s-solution of \eqref{stationary eq} if and only if $u$ is an s-solution of 
\eqref{eikonal eq} with $f=h$. 
To see this, suppose that there exist $x_0\in \Omega$, $\psi_1\in \ul{\mathcal{C}}(\Omega)$ and 
$\psi_2\in \text{Lip}_{loc}(\Omega)$ such that $u-\psi_1-\psi_2$ attains a maximum in $\Omega$ at $x_0$. 
Then by the monotonicity of $p\to H(x, \rho, p)$, the viscosity inequality \eqref{s-sub eq} holds
at $x_0$ if and only if 
\[
H\left(x_0, u(x_0), (|\nabla\psi_1|(x_0)-|\nabla \psi_2|^\ast(x_0))\vee 0\right)\leq 0,
\]
which, due to \eqref{implicit}, amounts to saying that
\[
|\nabla\psi_1|(x_0)-|\nabla \psi_2|^\ast(x_0)\leq h(x_0),
\]
that is, $u$ is an $s$-subsolution of \eqref{eikonal eq} with $f=h$. Analogous results for
supersolutions can be similarly proved. 

Noticing that Monge solutions and locally uniformly continuous s-solutions of \eqref{eikonal eq} have 
been proved to be (locally) equivalent in Theorem~\ref{thm equiv Monge}, we immediately obtain the 
equivalence of both notions for \eqref{stationary eq} and local Lipschitz continuity. 
\end{proof}

As in Corollary \ref{cor equiv Monge}, if $(\X, d)$ is additionally assumed to be proper, then in 
Theorem~\ref{thm equiv general} we can drop the assumption on the local uniform continuity of s-solutions. 
Moreover, when $(\X, d)$ is proper, in the proof above we only need to show the continuity of $h$ as 
in \eqref{implicit}, since continuity implies local uniform continuity.  
Thus the condition (1) can be removed and (2) can be weakened by merely assuming that for every 
$(x, \rho)\in \Omega\times \R$, the map $p\mapsto H(x, \rho, p)$ is strictly increasing 
in $(0, \infty)$. Below we state the result without proofs.

\begin{thm}[Equivalence of Monge and $s$-solutions in a proper space]\label{thm equiv general proper}
Let $(\X,  d)$ be a complete proper geodesic space and $\Omega\subset \X$ be an open set.  Let 
$H: \Omega\times \R\times [0, \infty)\to \R$ be continuous and 
be coercive as in \eqref{coercivity2}. 
Assume that for every $(x, \rho)\in \Omega\times \R$, $p\mapsto H(x, \rho, p)$ is strictly increasing  in $(0, \infty)$. 
 Then $u$ is a Monge solution of \eqref{stationary eq} if and only if $u$ is an
 s-solution of \eqref{stationary eq}. In addition, such $u$ is locally Lipschitz in $\Omega$. 
\end{thm}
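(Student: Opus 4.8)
The plan is to mimic the proof of Theorem~\ref{thm equiv general} almost line by line, using the properness of $(\X,d)$ to replace its quantitative hypotheses (1)--(2) by compactness arguments. Throughout, let $u$ be either a Monge solution or an $s$-solution of \eqref{stationary eq}; in either case $u$ is continuous (an $s$-solution by Definition~\ref{defi s}, a Monge solution because $\lipl(\Omega)\subset C(\Omega)$), and $(\X,d)$ is in particular a complete length space, so the earlier results on the eikonal equation apply.

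First I would establish that $H(x,u(x),0)\le 0$ for every $x\in\Omega$. For a Monge solution this is immediate from \eqref{stationary monge}, from $|\nabla^- u|(x)\ge 0$, and from the fact that $p\mapsto H(x,u(x),p)$ is nondecreasing on $[0,\infty)$ (being continuous there and strictly increasing on $(0,\infty)$). For an $s$-solution I would argue exactly as in the proof of Theorem~\ref{thm equiv general}: test $u$ from above at a fixed $x_0$ with $\psi_1=\vep^{-1}d(\cdot,x_0)^2\in\ul{\mathcal C}(\Omega)$ together with a Lipschitz perturbation $\psi_2$ produced by Ekeland's variational principle, use the $s$-subsolution inequality and the monotonicity of $H$ in $p$ to reduce it to $H(x_\vep,u(x_\vep),0)\le 0$, and then send $\vep\to 0$ using $x_\vep\to x_0$ and the continuity of $u$ and $H$.

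Next I would define $h\colon\Omega\to[0,\infty)$ by $h(x):=\inf\{p\ge 0:H(x,u(x),p)>0\}$, as in \eqref{implicit}; the coercivity \eqref{coercivity2} makes $h$ finite, and the previous step together with the strict monotonicity and continuity of $p\mapsto H(x,u(x),p)$ shows that $h(x)$ is the unique zero of this function, with $H(x,u(x),p)\le 0$ for $p\le h(x)$ and $H(x,u(x),p)>0$ for $p>h(x)$. The central point is that $h\in C(\Omega)$. To see it, fix $x_0\in\Omega$ and pick $r>0$ so small that $\overline{B_r(x_0)}$ is compact (here properness enters) and $|u|\le R$ on $B_r(x_0)$; by \eqref{coercivity2} there is $P=P(R)$ with $H(y,\rho,p)>0$ whenever $y\in\Omega$, $|\rho|\le R$ and $p>P$, so $0\le h\le P$ on $B_r(x_0)$. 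If $x_n\to x_0$, then $\{h(x_n)\}$ is bounded, hence every subsequence has a further subsequence along which $h(x_{n_k})\to\ell$; passing to the limit in $H(x_{n_k},u(x_{n_k}),h(x_{n_k}))=0$ and using the continuity of $u$ and $H$ gives $H(x_0,u(x_0),\ell)=0$, so $\ell=h(x_0)$ by uniqueness, and therefore $h(x_n)\to h(x_0)$. This compactness argument is precisely what replaces hypotheses (1)--(2) of Theorem~\ref{thm equiv general}, which there were used only to manufacture an explicit modulus of continuity for $h$.

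Finally I would carry out the reduction to the eikonal equation as in the proof of Theorem~\ref{thm equiv general}: by uniqueness of $h(x)$, $u$ is a Monge solution of \eqref{stationary eq} if and only if $|\nabla^- u|\equiv h$, i.e.\ a Monge solution of \eqref{eikonal eq} with $f=h$; and, using the monotonicity of $p\mapsto H$ to evaluate the envelopes $H_a$ and $H^a$ at the appropriate endpoints, the $s$-sub/supersolution inequalities \eqref{s-sub eq}--\eqref{s-sup eq} for \eqref{stationary eq} at a contact point are equivalent to \eqref{s-sub eikonal}--\eqref{s-super eikonal} with $f=h$, so $u$ is an $s$-solution of \eqref{stationary eq} if and only if it is an $s$-solution of that eikonal equation. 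Since $h\in C(\Omega)$ with $h\ge 0$ and since on a proper space continuity and local uniform continuity coincide on compacta, Proposition~\ref{prop eikonal sub2} and Proposition~\ref{prop eikonal super}(i) (whose combination does not need $f>0$) give that $u$ is a Monge solution of $|\nabla u|=h$ exactly when it is an $s$-solution of $|\nabla u|=h$, and in that case $u\in\lipl(\Omega)$ with $|\nabla u|=|\nabla^- u|=h$ by \eqref{sub char}. Chaining these equivalences yields the statement. I expect the main obstacle to be the continuity of the implicitly defined right-hand side $f=h$ in the third step (and the subtlety that $h(x)$ must be verified to be an honest zero, which genuinely uses $H(x,u(x),0)\le 0$ and not merely the infimum formula \eqref{implicit}); the rest is routine bookkeeping with the monotone envelopes $H_a,H^a$.
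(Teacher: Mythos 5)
Your proposal is correct and follows exactly the route the paper intends for this theorem (which it states without a written-out proof, indicating only that one should rerun the proof of Theorem~\ref{thm equiv general} and use properness to get by with mere continuity of the implicitly defined function $h$): you establish $H(x,u(x),0)\le 0$, define $h$ by \eqref{implicit}, obtain its continuity by a compactness/subsequence argument on compact balls, and reduce to the eikonal equation with $f=h$, invoking Propositions~\ref{prop eikonal sub2} and \ref{prop eikonal super}(i) (which need only $f\ge 0$) together with the fact that continuity implies local uniform continuity in a proper space. This matches the paper's sketch, so no further comparison is needed.
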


The strict monotonicity of $p\to H(x, \rho, p)$ assumed in Theorem~\ref{thm equiv general} and 
Theorem~\eqref{thm equiv general proper}  enables us to apply an implicit function argument. 
Although it is not clear to us if one can weaken the requirement, 
the examples below show that the equivalence result fails to hold in general if $H$ is not monotone in $p$.

\begin{example}
Let $\Omega=\X=\R$ with the standard Euclidean metric. Let 
\[
H(p)=1-|p-2|+\max\{p-3, 0\}^2, \quad p\geq 0.
\]
One can easily verify that this Hamiltonian satisfies all assumptions in Theorem \ref{thm equiv general} except for the monotonicity. 

It is not difficult to see that the function $u$ given by $u(x)=-3|x|$ for $x\in \R$
is a Monge solution of
\beq\label{example hj}
H(|\nabla u|)=0 \quad \text{in $\R$},
\eeq
since $|\nabla^- u|=3$ in $\R$.
 It is however not a conventional viscosity solution or an s-solution, though it is an s-supersolution. 
\end{example}

\begin{example}
Let $\Omega=\X=\R$ again. Set 
\[
H(p)=1-|p|+\max\{p-3, 0\}^2, \quad p\geq 0,
\]
which again satisfies all assumptions in Theorem \ref{thm equiv general} but the monotonicity.
This time we have 
\[
u(x)=|x|, \quad x\in \R
\]
as a viscosity solution or s-solution of \eqref{example hj}. But it is not a Monge solution, since $|\nabla^- u|(0)=0$ and $H(0)\neq 0$. 
\end{example}

\subsection{Further regularity}\label{sec:regularity}

Motivated by the observation \eqref{regular0} in Theorem \ref{thm equiv Monge}, we consider 
a higher regularity than the Lipschitz continuity related to the Monge solutions of \eqref{stationary eq}. 
One can slightly strengthen Definition \ref{defi monge} by further requiring that the solution $u$  
satisfy \eqref{regular}. We say that a solution $u$ is regular in $\Omega$ if \eqref{regular} holds. The reason why we 
regard this condition as regularity will be clarified below.

The property~\eqref{regular}  is studied for time-dependent Hamilton-Jacobi equations on metric 
spaces \cite[Proof of Theorem~2.5 and Remark~2.27]{LoVi}. The authors of \cite{LoVi} show that at any fixed time, 
such spatial regularity holds for the Hopf-Lax formula almost everywhere if $\X$ is a metric measure space that 
satisfies a doubling condition and a local Poincar\'e inequality, or everywhere if $\X$ is a finite dimensional 
space with Alexandrov curvature bounded from below. 
In our stationary setting, we have shown that any Monge solution of the eikonal equation \eqref{eikonal eq} is 
regular in a complete length space without using \emph{any} measure structure or imposing any assumptions on 
the space dimension and curvature, see Theorem~\ref{thm equiv Monge}. Now we consider the more general
Hamiltonian $H$.

\begin{prop}[Regularity of Monge solutions]\label{prop regular general}
Suppose that the assumptions in Theorem~\ref{thm equiv general} or Theorem~\ref{thm equiv general proper} hold.
If $u$ is a Monge solution of \eqref{stationary eq}, then $u$ is regular in $\Omega$ and $u\in \ul{\mathcal{C}}(\Omega)$. 
\end{prop}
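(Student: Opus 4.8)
The plan is to reduce the statement to the eikonal case of Theorem~\ref{thm equiv Monge} by means of the same implicit‑function device used in the proof of Theorem~\ref{thm equiv general}, and then simply read off the regularity from~\eqref{regular0} (equivalently, from~\eqref{sub char}).

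First I would recall from the proof of Theorem~\ref{thm equiv general} the function
\[
h(x):=\inf\{p\geq 0: H(x, u(x), p)>0\},
\]
which, by the coercivity and monotonicity hypotheses, is the unique nonnegative root of $H(x,u(x),\cdot)=0$, is locally uniformly continuous in $\Omega$ (continuity being automatic, and sufficient, in the proper setting of Theorem~\ref{thm equiv general proper}), satisfies $h\geq 0$, and has the property that $u$ is a Monge solution of~\eqref{stationary eq} if and only if $u$ is a Monge solution of the eikonal equation~\eqref{eikonal eq} with $f=h$; in particular $|\nabla^-u|(x)=h(x)$ for every $x\in\Omega$. Moreover $u\in\lipl(\Omega)$ by Theorem~\ref{thm equiv general} (resp.\ Theorem~\ref{thm equiv general proper}).

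Next I would upgrade the pointwise identity $|\nabla^-u|=h$ to the full identity $|\nabla u|=h$. Since $u$ is a Monge subsolution of~\eqref{eikonal eq} with the locally uniformly continuous, nonnegative right‑hand side $f=h$, Proposition~\ref{prop eikonal sub2}(i) shows that $u$ is an s‑subsolution of that equation; being locally Lipschitz, $u$ is in particular locally uniformly continuous, so the argument in the proof of Proposition~\ref{prop eikonal sub2}(ii) applies and yields~\eqref{sub char} with $f=h$, i.e.\ $|\nabla u|(x)\leq h(x)$ for every $x\in\Omega$. Combining this with the trivial bound $|\nabla^-u|\leq|\nabla u|$ and with $|\nabla^-u|=h$ gives
\[
h(x)=|\nabla^-u|(x)=|\nabla u|(x)\qquad\text{for all }x\in\Omega,
\]
which is precisely~\eqref{regular}; hence $u$ is regular in $\Omega$.

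Finally, to conclude $u\in\ul{\mathcal{C}}(\Omega)$ I would check the three defining conditions in~\eqref{test class}: $u\in\lipl(\Omega)$ (already noted), $|\nabla^-u|=|\nabla u|$ in $\Omega$ (just proved), and continuity of $|\nabla u|$ in $\Omega$ (it coincides with $h$, which is locally uniformly continuous, hence continuous). The only point requiring care is to verify that the reduction $f=h$ preserves all the hypotheses needed to invoke Proposition~\ref{prop eikonal sub2}—namely the local uniform continuity and the nonnegativity of $h$—so that~\eqref{sub char} is legitimately available; note in this respect that $h$ need not be strictly positive even though $H$ is only assumed coercive, but Proposition~\ref{prop eikonal sub2}, and thus~\eqref{sub char}, requires only $f\geq 0$, so no extra assumption is incurred. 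Everything else is a direct application of results already established.
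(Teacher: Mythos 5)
Your proposal is correct and follows essentially the same route as the paper: reduce to the eikonal equation with right-hand side $h$ from \eqref{implicit} via the implicit-function argument in the proof of Theorem~\ref{thm equiv general}, then obtain \eqref{regular} by combining \eqref{sub char} from the proof of Proposition~\ref{prop eikonal sub2} with the Monge (super)solution identity $|\nabla^- u|=h$, and read off $u\in\ul{\mathcal{C}}(\Omega)$ from the continuity of $h$. Your extra care about $h\geq 0$ sufficing (rather than $h>0$) and about the proper case of Theorem~\ref{thm equiv general proper} matches the paper's remarks.
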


\begin{proof}
We have shown in the proof of Theorem \ref{thm equiv general} that $u$ is locally a Monge solution of 
\[
|\nabla u|=h(x),
\]
where $h\in C(\Omega)$ is given by \eqref{implicit} and we have $h\geq 0$ in $\Omega$. 
The regularity \eqref{regular} is thus an immediate consequence of \eqref{sub char} in the proof of Proposition~\ref{prop eikonal sub2} and the definition of Monge solutions. We also see that $u\in \ul{\mathcal{C}}(\Omega)$. 
One can also apply the same proof to obtain the conclusion under the assumptions of Theorem \ref{thm equiv general proper}.
\end{proof}

We emphasize that for \eqref{regular} the strict monotonicity of $p\to H(x, \rho, p)$ cannot be relaxed, as 
indicated by the following simple example.

\begin{example}\label{ex:non-regular}
Let 
\[
H(p)=\begin{cases}
p &\text{when $0\leq p< 1$,}\\
1 &\text{when $1\leq p<2$,}\\
p-1 &\text{when $p\geq 2$.}
\end{cases}
\]
In this case, it is easily seen that 
\[
u(x)=\begin{cases}
x &\text{for $x\leq 0$,}\\
2x & \text{for $x>0$}
\end{cases}
\]
is an s-solution (usual viscosity solution) of 
\[
H(|\nabla u|)=1\quad \text{in $\X=\R$},
\]
but $|\nabla u|(0)=2\neq 1=|\nabla^- u|(0)$. 
\end{example}

We finally remark that \eqref{regular} can be regarded as a type of regularity that is related 
to the semi-concavity in the Euclidean spaces. Let us below briefly recall several results on 
semi-concave functions in $\R^N$; we refer the reader to \cite{CaS} for detailed introduction 
to the classical results and to \cite{Alb, ACS} for recent developments in more general sub-Riemannian contexts.

Recall (cf. \cite[Definition 2.1.1]{CaS}) that for any open set $\Omega\subset \R^N$, 
$u\in C(\Omega)$ is said to be a (generalized) semi-concave function in $\Omega$ if there 
exists a nondecreasing upper semicontinuous function $\omega: [0, \infty)\to [0, \infty)$ such that 
\[
\lim_{r\to 0+}\omega(r)=0
\]
and
\beq\label{general semiconcave}
u(x+\eta)+u(x-\eta)-2u(x)\leq \omega(|\eta|)|\eta|
\eeq
for any $x\in \Omega$ and any $\eta\in \R^N$ with $|\eta|$ sufficiently small. A more well-known 
notion of semi-concave functions is to ask $u\in C(\Omega)$ to satisfy \eqref{general semiconcave} with 
$\omega(r)=cr$ for some $c>0$; in this case, $u$ is semi-concave in $\Omega$ if and only if 
$x\mapsto u(x)-c|x|^2/2$ is concave in $\Omega$. 

It is well known \cite[Theorem 5.3.7]{CaS} that viscosity solutions of \eqref{stationary eq} are 
locally semi-concave (in the generalized sense) provided that $H(x, \rho, p)$ is locally Lipschitz and 
strictly convex in $p$; one can apply this result to \eqref{eikonal eq} by changing the form into 
\[
|\nabla u|^2=f(x)^2 \quad\text{ in $\Omega$}
\]
in order to meet the requirement of strict convexity.  Moreover, any such semi-concave function has 
nonempty superdifferentials; in other words, it can be touched everywhere from above by a $C^1$ 
function; see \cite[Proposition 3.3.4]{CaS}. The condition \eqref{regular} is slightly weaker than this 
property in $\R^N$. 

\begin{prop}[Regularity and upper testability in Euclidean spaces]\label{prop upper test}
Let $x_0\in \R^N$ and assume that $u: \R^N\to \R$ is Lipschitz near $x_0$. If there exists a function 
$\psi$ differentiable at $x_0$ such that $u-\psi$ attains a local maximum at $x_0$, then 
\beq\label{pointwise regularity}
|\nabla u|(x_0)=|\nabla^- u|(x_0).
\eeq
\end{prop}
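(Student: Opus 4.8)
The statement to prove is Proposition~\ref{prop upper test}: if $u$ is Lipschitz near $x_0\in\R^N$ and there is a function $\psi$ differentiable at $x_0$ with $u-\psi$ attaining a local maximum at $x_0$, then $|\nabla u|(x_0)=|\nabla^- u|(x_0)$. Since the inequality $|\nabla^- u|(x_0)\le|\nabla u|(x_0)$ is immediate from the definitions \eqref{slope} and \eqref{semi slope} (the sub-slope uses $[u(y)-u(x_0)]_-\le |u(y)-u(x_0)|$), the only thing to establish is the reverse inequality $|\nabla u|(x_0)\le|\nabla^- u|(x_0)$. The plan is to exploit the upper test function $\psi$ to show that the ``$+$'' part of the slope contributes nothing, i.e. $|\nabla^+ u|(x_0)\le|\nabla^- u|(x_0)$, which together with $|\nabla u|(x_0)=\max\{|\nabla^+u|(x_0),|\nabla^- u|(x_0)\}$ gives the claim.

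\textbf{Key steps.} First I would record that $u-\psi$ having a local max at $x_0$ means $u(y)-u(x_0)\le \psi(y)-\psi(x_0)$ for $y$ near $x_0$, and by differentiability of $\psi$ at $x_0$, $\psi(y)-\psi(x_0)=\langle p, y-x_0\rangle + o(|y-x_0|)$ with $p=\nabla\psi(x_0)$. Hence $[u(y)-u(x_0)]_+\le [\langle p, y-x_0\rangle]_+ + o(|y-x_0|) \le |p|\,|y-x_0| + o(|y-x_0|)$, so $|\nabla^+ u|(x_0)\le |p|$. Second, and this is the crux, I would show $|p|\le|\nabla^- u|(x_0)$. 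To see this, take the direction $e:=-p/|p|$ (assuming $p\neq 0$; if $p=0$ then $|\nabla^+u|(x_0)=0$ and we are trivially done). Along $y=x_0+te$ with $t\downarrow 0$, we have $\langle p, y-x_0\rangle = -|p|t<0$, so the upper-test inequality gives $u(x_0+te)-u(x_0)\le -|p|t + o(t)$, i.e. $u(x_0)-u(x_0+te)\ge |p|t - o(t)$. Dividing by $t=|y-x_0|$ and taking $\limsup$ as $t\downarrow 0$ yields $|\nabla^- u|(x_0)\ge |p|$. Combining, $|\nabla^+u|(x_0)\le|p|\le|\nabla^- u|(x_0)$, hence $|\nabla u|(x_0)=\max\{|\nabla^+u|(x_0),|\nabla^-u|(x_0)\}=|\nabla^- u|(x_0)$.

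\textbf{Main obstacle.} There is no serious obstacle here; the Lipschitz hypothesis only serves to guarantee that all the slopes are finite and well-defined (in particular $|\nabla u|(x_0)<\infty$), and the argument is essentially a one-line differentiation estimate. The one point requiring a little care is the identity $|\nabla u|(x_0)=\max\{|\nabla^+u|(x_0),|\nabla^- u|(x_0)\}$: this follows because $|u(y)-u(x_0)|=[u(y)-u(x_0)]_+ + [u(y)-u(x_0)]_-$ and for each $y$ only one of the two summands is nonzero, so the quotient $|u(y)-u(x_0)|/|y-x_0|$ equals $\max$ of the two one-sided quotients evaluated at that same $y$; taking $\limsup$ then gives ``$\le\max$'', while ``$\ge$'' of each one-sided slope is clear. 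I would also note explicitly the degenerate case $p=\nabla\psi(x_0)=0$ separately, as indicated above, to avoid dividing by $|p|$. Everything else is routine.
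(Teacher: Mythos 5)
Your proof is correct and follows essentially the same route as the paper: both establish the sandwich $|\nabla^+ u|(x_0)\le |\nabla\psi(x_0)|\le |\nabla^- u|(x_0)$ from the local-maximum inequality and the differentiability of $\psi$ (the paper via $|\nabla^\pm u|$ vs. $|\nabla^\pm\psi|$ and $|\nabla^\pm\psi|(x_0)=|\nabla\psi(x_0)|$, you by unpacking the first-order expansion and the direction $-\nabla\psi(x_0)/|\nabla\psi(x_0)|$), and then conclude via $|\nabla u|=\max\{|\nabla^+u|,|\nabla^-u|\}$. The only difference is presentational, with your write-up making explicit the degenerate case $\nabla\psi(x_0)=0$ and the max identity that the paper leaves implicit.
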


\begin{proof}
By assumptions, we have 
\[
u(x)-\psi(x)\leq u(x_0)-\psi(x_0)
\]
for any $x\in \Omega$ near $x_0$. It follows that 
\[
\begin{aligned}
&\min\{u(x)-u(x_0), 0\}\leq \min\{\psi(x)-\psi(x_0), 0\},\\
&\max\{u(x)-u(x_0), 0\}\leq \max\{\psi(x)-\psi(x_0), 0\}.
\end{aligned}
\]
These relations imply that 
\beq\label{eq regular1}
\begin{aligned}
&|\nabla^- u|(x_0)\geq |\nabla^- \psi|(x_0),\\
&|\nabla^+ u|(x_0)\leq |\nabla^+ \psi|(x_0).
\end{aligned}
\eeq
Noticing that $\psi$ is differentiable at $x_0$, we have 
\beq\label{differentiable}
|\nabla^+ \psi|(x_0)=|\nabla^- \psi|(x_0)=|\nabla \psi(x_0)|.
\eeq
We can use \eqref{eq regular1} to obtain \eqref{pointwise regularity} immediately.
\end{proof}

The statement converse to that of Proposition \ref{prop upper test} also holds when $N=1$ but it fails to hold in 
higher dimensions, as indicated by the example below. 
\begin{example}
Let $u: \R^2\to \R$ be given by
\[
u(x)=\min\{x_1, 0\}+|x_2|\quad \text{for $x=(x_1, x_2)\in \R^2$. }
\]
It is clear that $u$ is Lipschitz in $\R^2$ and $|\nabla u|(0)=|\nabla^- u|(0)=1$ but $u$ cannot be tested 
at $x=0$  from above by any function that is differentiable at $x=0$. 
\end{example}

However, for $x_0\in \R^N$, if \eqref{regular} holds in a neighborhood $\Omega$ of $x_0$ and $|\nabla u|$ 
is continuous in $\Omega$, then $u$ can be touched from above everywhere in $\Omega$ by a $C^1$ function. 
Indeed, under these assumptions, $u$ can be regarded as a Monge solution of \eqref{eikonal eq} with $f$ 
continuous in $\Omega$. This in turn implies that $u$ is an s-solution and a conventional viscosity solution 
in the Euclidean space by Corollary \ref{cor equiv Monge}.  Then its local semi-concavity and testability from 
above follow immediately.  

These suggest that \eqref{regular} can be adopted as a generalized semi-concavity in more general metric spaces.

\bibliographystyle{abbrv}

\begin{thebibliography}{10}

\bibitem{ACCT}
Y.~Achdou, F.~Camilli, A.~Cutr\`\i, and N.~Tchou.
\newblock Hamilton-{J}acobi equations constrained on networks.
\newblock {\em NoDEA Nonlinear Differential Equations Appl.}, 20(3):413--445,
  2013.

\bibitem{Alb}
P.~Albano.
\newblock On the local semiconcavity of the solutions of the eikonal equation.
\newblock {\em Nonlinear Anal.}, 73(2):458--464, 2010.

\bibitem{ACS}
P.~Albano, P.~Cannarsa, and T.~Scarinci.
\newblock Regularity results for the minimum time function with {H}\"{o}rmander
  vector fields.
\newblock {\em J. Differential Equations}, 264(5):3312--3335, 2018.

\bibitem{AF}
L.~Ambrosio and J.~Feng.
\newblock On a class of first order {H}amilton-{J}acobi equations in metric
  spaces.
\newblock {\em J. Differential Equations}, 256(7):2194--2245, 2014.

\bibitem{AGS13}
L.~Ambrosio, N.~Gigli, and G.~Savar\'{e}.
\newblock Heat flow and calculus on metric measure spaces with {R}icci
  curvature bounded below---the compact case.
\newblock In {\em Analysis and numerics of partial differential equations},
  volume~4 of {\em Springer INdAM Ser.}, pages 63--115. Springer, Milan, 2013.

\bibitem{BC}
M.~Bardi and I.~Capuzzo-Dolcetta.
\newblock {\em Optimal control and viscosity solutions of
  {H}amilton-{J}acobi-{B}ellman equations}.
\newblock Systems \& Control: Foundations \& Applications. Birkh{\"a}user
  Boston Inc., Boston, MA, 1997.
\newblock With appendices by Maurizio Falcone and Pierpaolo Soravia.

\bibitem{BrDa}
A.~Briani and A.~Davini.
\newblock Monge solutions for discontinuous {H}amiltonians.
\newblock {\em ESAIM Control Optim. Calc. Var.}, 11(2):229--251 (electronic),
  2005.

\bibitem{BBIbook}
D.~Burago, Y.~Burago, and S.~Ivanov.
\newblock {\em A course in metric geometry}, volume~33 of {\em Graduate Studies
  in Mathematics}.
\newblock American Mathematical Society, Providence, RI, 2001.

\bibitem{BHK}
M.~Bonk, J.~Heinonen, and P.~Koskela,
\newblock Uniformizing Gromov hyperbolic spaces.
\newblock {\em Ast\'erisque} No. 270 (2001), viii+99 pp.

\bibitem{CCM}
F.~Camilli, R.~Capitanelli, and C.~Marchi.
\newblock Eikonal equations on the {S}ierpinski gasket.
\newblock {\em Math. Ann.}, 364(3-4):1167--1188, 2016.

\bibitem{CaS}
P.~Cannarsa and C.~Sinestrari.
\newblock {\em Semiconcave functions, {H}amilton-{J}acobi equations, and
  optimal control}, volume~58 of {\em Progress in Nonlinear Differential
  Equations and their Applications}.
\newblock Birkh\"auser Boston, Inc., Boston, MA, 2004.

\bibitem{CLShZ}
L.~Capogna, Q.~Liu, N.~Shanmugalingam, and X.~Zhou.
\newblock Discontinuos eikonal equations in metric measure spaces.
\newblock {\em in preparation}.

\bibitem{CaNotes}
P.~Cardaliaguet.
\newblock Notes on mean field games (from p.-l. lions' lectures at coll{\'e}ge
  de france).
\newblock https://www.ceremade.dauphine.fr/~cardaliaguet/MFG20130420.pdf, 2013.

\bibitem{CIL}
M.~G. Crandall, H.~Ishii, and P.-L. Lions.
\newblock User's guide to viscosity solutions of second order partial
  differential equations.
\newblock {\em Bull. Amer. Math. Soc. (N.S.)}, 27(1):1--67, 1992.

\bibitem{DeP1}
G.~De~Cecco and G.~Palmieri.
\newblock Length of curves on {L}ip manifolds.
\newblock {\em Atti Accad. Naz. Lincei Cl. Sci. Fis. Mat. Natur. Rend. Lincei
  (9) Mat. Appl.}, 1(3):215--221, 1990.

\bibitem{DeP2}
G.~De~Cecco and G.~Palmieri.
\newblock Integral distance on a {L}ipschitz {R}iemannian manifold.
\newblock {\em Math. Z.}, 207(2):223--243, 1991.

\bibitem{DJS}
E.~Durand-Cartagena, J.~A. Jaramillo, and N.~Shanmugalingam.
\newblock Existence and uniqueness of {$\infty$}-harmonic functions under
  assumption of {$\infty$}-{P}oincar\'{e} inequality.
\newblock {\em Math. Ann.}, 374(1-2):881--906, 2019.

\bibitem{Ek1}
I.~Ekeland.
\newblock On the variational principle.
\newblock {\em J. Math. Anal. Appl.}, 47:324--353, 1974.

\bibitem{Ek2}
I.~Ekeland.
\newblock Nonconvex minimization problems.
\newblock {\em Bull. Amer. Math. Soc. (N.S.)}, 1(3):443--474, 1979.

\bibitem{GaS2}
W.~Gangbo and A.~{\'S}wi{\polhk{e}}ch.
\newblock Optimal transport and large number of particles.
\newblock {\em Discrete Contin. Dyn. Syst.}, 34(4):1397--1441, 2014.

\bibitem{GaS}
W.~Gangbo and A.~{\'S}wi{\polhk{e}}ch.
\newblock Metric viscosity solutions of {H}amilton-{J}acobi equations depending
  on local slopes.
\newblock {\em Calc. Var. Partial Differential Equations}, 54(1):1183--1218,
  2015.

\bibitem{GHN}
Y.~Giga, N.~Hamamuki, and A.~Nakayasu.
\newblock Eikonal equations in metric spaces.
\newblock {\em Trans. Amer. Math. Soc.}, 367(1):49--66, 2015.

\bibitem{Gr}
M.~Gromov, 
\newblock {\em Metric Structures for Riemannian and Non-Riemannian
Spaces.}
\newblock Birkh\"{a}user, Boston, 1999. 

\bibitem{Haj1}
P.~Haj\l{}asz.
\newblock Sobolev spaces on metric-measure spaces.
\newblock In {\em Heat kernels and analysis on manifolds, graphs, and metric
  spaces ({P}aris, 2002)}, volume 338 of {\em Contemp. Math.}, pages 173--218.
  Amer. Math. Soc., Providence, RI, 2003.

\bibitem{HKSTBook}
J.~Heinonen, P.~Koskela, N.~Shanmugalingam, and J.~T. Tyson.
\newblock {\em Sobolev spaces on metric measure spaces-an approach based on upper gradients.}, volume~27 of {\em New
  Mathematical Monographs}.
\newblock Cambridge University Press, Cambridge, 2015.


\bibitem{IMo1}
C.~Imbert and R.~Monneau.
\newblock Flux-limited solutions for quasi-convex {H}amilton-{J}acobi equations
  on networks.
\newblock {\em Ann. Sci. \'Ec. Norm. Sup\'er. (4)}, 50(2):357--448, 2017.

\bibitem{IMo2}
C.~Imbert and R.~Monneau.
\newblock Quasi-convex {H}amilton-{J}acobi equations posed on junctions: {T}he
  multi-dimensional case.
\newblock {\em Discrete Contin. Dyn. Syst.}, 37(12):6405--6435, 2017.

\bibitem{IMZ}
C.~Imbert, R.~Monneau, and H.~Zidani.
\newblock A {H}amilton-{J}acobi approach to junction problems and application
  to traffic flows.
\newblock {\em ESAIM Control Optim. Calc. Var.}, 19(1):129--166, 2013.

\bibitem{KKbook}
M.~Kline and I.~W. Kay.
\newblock {\em Electromagnetic theory and geometrical optics}.
\newblock Pure and Applied Mathematics, Vol. XII. Interscience Publishers John
  Wiley \& Sons, Inc.\, New York-London-Sydn ey, 1965.

\bibitem{LNa}
Q.~Liu and A.~Nakayasu.
\newblock Convexity preserving properties for {H}amilton-{J}acobi equations in
  geodesic spaces.
\newblock {\em Discrete Contin. Dyn. Syst.}, 39(1):157--183, 2019.

\bibitem{LoVi}
J.~Lott and C.~Villani.
\newblock Hamilton-{J}acobi semigroup on length spaces and applications.
\newblock {\em J. Math. Pures Appl. (9)}, 88(3):219--229, 2007.

\bibitem{MSbook}
S.~K. Mitrea and G.~L. Sicuranza.
\newblock {\em Nonlinear image processing}.
\newblock Academic Press, Inc., San Diego, CA, 2001.

\bibitem{Na1}
A.~Nakayasu.
\newblock Metric viscosity solutions for {H}amilton-{J}acobi equations of
  evolution type.
\newblock {\em Adv. Math. Sci. Appl.}, 24(2):333--351, 2014.

\bibitem{NN}
A.~Nakayasu and T.~Namba.
\newblock Stability properties and large time behavior of viscosity solutions
  of {H}amilton-{J}acobi equations on metric spaces.
\newblock {\em Nonlinearity}, 31(11):5147--5161, 2018.

\bibitem{NeSu}
R.~T. Newcomb, II and J.~Su.
\newblock Eikonal equations with discontinuities.
\newblock {\em Differential Integral Equations}, 8(8):1947--1960, 1995.

\bibitem{Pa}
A.~Papadopoulos.
\newblock {\em Metric spaces, convexity and non-positive curvature}, volume~6
  of {\em IRMA Lectures in Mathematics and Theoretical Physics}.
\newblock European Mathematical Society (EMS), Z\"urich, second edition, 2014.

\bibitem{SchC}
D.~Schieborn and F.~Camilli.
\newblock Viscosity solutions of {E}ikonal equations on topological networks.
\newblock {\em Calc. Var. Partial Differential Equations}, 46(3-4):671--686,
  2013.

\bibitem{Vbook}
C.~Villani.
\newblock {\em Optimal transport}, volume 338 of {\em Grundlehren der
  Mathematischen Wissenschaften [Fundamental Principles of Mathematical
  Sciences]}.
\newblock Springer-Verlag, Berlin, 2009.
\newblock Old and new.

\end{thebibliography}

\end{document}